\tikzset{>=latex}
\tikzstyle{plate caption} = [caption, node distance=0, inner sep=0pt,
\newcommand{\X}{\mathbb{X}}
\newcommand{\Z}{\mathbb{Z}}
\newcommand{\C}{\mathbf{C}}
\newcommand{\Cplx}{\mathbb{C}}
\newcommand{\SVAR}{\mathrm{SVAR}_\mathrm{stb}^\infty}
\newcommand{\SVARs}{\mathrm{SVAR}_{<1}}
\newcommand{\R}{\mathbb{R}}
\newcommand{\reg}{\mathrm{reg}}
\newcommand{\Sp}[1]{\mathcal{S}_{#1}}
\newcommand{\T}[1]{\mathcal{T}_{#1}}
\newcommand{\causal}[2]{\mathbf{C}_{#1 \to #2}}
\newcommand{\spcausal}[2]{\mathcal{S}_{#1 \to #2}}
\newcommand{\spconf}[2]{\mathcal{S}_{#1 \leftrightarrow #2}}
\newcommand{\spresidual}[2]{\mathcal{S}_{#2 \setminus #1}}
\newcommand{\internal}{\mathrm{I}}
\newcommand{\linternal}{\mathrm{LI}}
\newcommand{\Pa}{\mathrm{Pa}}
\newcommand{\PD}{\mathrm{PD}}
\newcommand{\Rfnc}{\mathfrak{R}_{S^1}}
\newcommand{\trekL}{\mathrm{Left}}
\newcommand{\trekR}{\mathrm{Right}}
\newcommand{\diag}{\mathrm{diag}}
\newcommand{\ttop}{\mathrm{top}}
\newcommand{\tcvl}{\hat{\ast}}
\newcommand{\cetemp}[3]{\Lambda_{#1 \to #2 | \mathrm{do}(#3) = \mathbf{0}}}
\newcommand{\ucetemp}[2]{\Lambda_{#1 \to #2}}
\newcommand{\cefrq}[3]{\mathfrak{H}_{#1 \to #2 | #3}}
\newcommand{\ucefrq}[2]{\mathfrak{H}_{#1 \to #2}}
\newcommand{\dirtmp}[2]{\Lambda_{#1, #2}}
\newcommand{\dirtmpabs}[2]{|\Lambda|_{#1, #2}}
\newcommand{\ltmp}[2]{\Gamma_{#1, #2}}
\newcommand{\lfrq}[2]{\mathfrak{J}_{#1, #2}}
\newcommand{\Dirtmp}{\Lambda}
\newcommand{\Ltmp}{\Gamma}
\newcommand{\dirfrq}[2]{\mathfrak{H}_{#1, #2}}
\newcommand{\dirfrqabs}[2]{|\mathfrak{H}|_{#1, #2}}
\newcommand{\Dirfrq}{\mathfrak{H}}
\newcommand{\dirwnd}[2]{\overline{\mathbf{A}}_{#1 \to #2}}
\newcommand{\circfrq}[2]{\mathfrak{H}_{#1 \rightleftarrows #2}}
\newcommand{\Do}{\mathrm{do}}
\newcommand{\Ep}{\mathbb{E}}
\theoremstyle{plain}
\newtheorem{theorem}{Theorem}
\newtheorem{proposition}{Proposition}
\newtheorem{lemma}{Lemma}
\newtheorem{corollary}{Corollary}
 \theoremstyle{definition}
 \newtheorem{definition}{Definition}
 \newtheorem{example}{Example}
 \newtheorem{remark}{Remark}
\titleformat{\title}{display}{}{}{}[]
\titleformat{\section}[runin]{\normalsize\bfseries}{\thesection.}{0.5em}{}[.]
\titleformat{\subsection}[runin]{\normalsize\itshape}{\thesubsection.}{0.3em}{}[.]
\providecommand{\keywords}[1]
{
  \small	
  \textbf{\textit{Keywords---}} #1
}
\title{\large\textbf{CAUSAL INFERENCE ON PROCESS GRAPHS \\PART I} \\ \normalsize{\textbf{THE STRUCTURAL EQUATION PROCESS REPRESENTATION}}}
\author[1]{\normalsize Nicolas-Domenic Reiter \thanks{nicolas-domenic.reiter@dlr.de}}
\author[1]{ \normalsize Andreas Gerhardus \thanks{andreas.gerhardus@dlr.de}}
\author[2,1]{ \normalsize Jonas Wahl \thanks{wahl@tu-berlin.de}}
\author[1,2,3]{\normalsize Jakob Runge \thanks{jakob.runge@tu-dresden.de}}
\affil[1]{\normalsize German Aerospace Center (DLR), Institute of Data Science, Jena, Germany}
\affil[2]{\normalsize Technische Universität Berlin, Berlin, Germany}
\affil[3]{\normalsize Center for Scalable Data Analytics and Artificial Intelligence (ScaDS.AI) Dresden/ Leipzig, TU Dresden, Germany}
\date{}
\begin{document}

\maketitle

\begin{abstract}
    When dealing with time series data, causal inference methods often employ structural vector autoregressive (SVAR) processes to model time-evolving random systems. In this work, we rephrase recursive SVAR processes with possible latent component processes as a linear Structural Causal Model (SCM) of stochastic processes on a simple causal graph, the \emph{process graph}, that models every process as a single node. Using this reformulation, we generalise Wright's well-known path-rule for linear Gaussian SCMs to the newly introduced process SCMs and we express the auto-covariance sequence of an SVAR process by means of a generalised trek-rule. Employing the Fourier-Transformation, we derive compact expressions for causal effects in the frequency domain that allow us to efficiently visualise the causal interactions in a multivariate SVAR process. Finally, we observe that the process graph can be used to formulate graphical criteria for identifying causal effects and to derive algebraic relations with which these frequency domain causal effects can be recovered from the observed spectral density.
\end{abstract}

\keywords{causal inference; structural causal model; time series; structural VAR process; spectral density}

\section{Introduction}
Causal inference concerns the data-driven investigation of causal relationships, formalised within a framework of statistically and graphically encoded concepts and assumptions. One commonly used causal modelling framework are \textit{Structural Causal Models (SCMs)} \cite{pearl2009causality, peters2017elements, spirtes2000causation}. An SCM has an associated directed graph representing the qualitative cause-effect relations among the variables in the SCM. In this so-called \emph{causal graph}, each vertex represents a unique variable, and a directed link between two vertices symbolises a direct causal influence. Modelling causal relationships by SCMs enables one to rigorously reason about interventions on variables and, thus, allows to quantitatively define the effect of an intervention between sets of variables. These effects are often termed causal effects, and the question whether they can be estimated from the observational distribution given the causal graph, constitutes one of the major research problems in causal inference. Another major task in causal inference, known as \textit{causal discovery} or \textit{causal structure learning}, concerns learning the causal graph from the observational distribution given appropriate enabling assumptions. Linear Gaussian SCMs \cite{wright1934method, bollen1989structural}, also known as \textit{structural equation models} (SEM), provide a simple yet interesting class of example SCMs on which these two types of problems have been studied extensively, see e.g. \cite{drton2018algebraic} for an overview.  

The data with which causal questions are to be answered often carries a time structure. Consequently, the SCM and causal graphical model framework has been extended to time series, see e.g. \cite{eichler2010graphical, dahlhaus2003causality, peters2017elements, runge2023causal}, where the variables are modeled as $\Z$-indexed stochastic processes instead of scalar-valued random variables. 
The causal graph then becomes an infinite graph, referred to as time series graph \cite{runge2019detecting}, time series chain graph \cite{dahlhaus2003causality}, full time graph \cite{peters2017elements} or time series DAG \cite{gerhardus2021characterization}, that contains one node for every variable at every discrete time-point, and a link in the time series graph indicates a direct causal influence together with the time-delay with which the effect occurs. Among the examples of time series SCMs are the widely applied \cite{Seth3293, FRISTON2013172, MemoryMattersACaseforGrangerCausalityinClimateVariabilityStudies, QuantifyingtheStrengthandDelayofClimaticInteractionsTheAmbiguitiesofCrossCorrelationandaNovelMeasureBasedonGraphicalModels, bernanke1995blackbox} \textit{linear (structural) vector autoregressive (SVAR)} processes \cite{lutkepohl2005new, brockwell2009time}, which recursively model the state of the process as a linear combination of the present and past states as well as an independent Gaussian noise term. SVAR processes extend SEMs in that they allow to model time-lagged effects between processes and auto-dependencies within processes. 

The infinite time series graph can be contracted along its temporal axis by forgetting the time-lags of the direct causal influences. The reduced graph is a finite graph over the set of processes and is sometimes called the summary graph \cite{assad2022timeseries, peters2017elements}. To visualise possible auto-dependencies explicitly, the summary graph typically contains self-edges (edges pointing from a node to itself). In this work, we focus on the closely related \emph{process graph} which is simply the summary graph without explicitly drawn self-edges. We emphasize however that this is just a matter of convenience for model representation and graphical computation, and the SVAR processes under investigation are assumed to exhibit autodependence as usual. In general, the process graph is considerably less complex than the time series graph, as there are infinitely many possible time series graphs which collapse into the same process graph.

\paragraph{Advantages of process graph models} Inferring the causal structure of time series models from observational data has been subject to extensive research. This research includes the classical and widely known concept of Granger Causality \cite{granger1969investigating} and its frequency version \cite{geweke1982measurement, geweke1984measures}. More recent approaches, like \cite{runge2019inferring, runge2019detecting, runge2020discovering, gerhardus2020high}, are based on the aforementioned time series SCMs. The latter methods are designed to discover the full time series graph. Methods related to Granger Causality (GC) as well as some more recent approaches such as \cite{manten2024signature}, seek to discover the less complex summary graph, a goal that is slightly more humble and therefore easier to achieve. 

A strength of the SCM-framework is that, once the time series graph has been identified (either by a causal discovery algorithm or domain experts), it is possible to estimate causal effects (if they are identifiable). Causal effects in time series are discussed, for example in \cite{eichler2010granger,  QuantifyingtheStrengthandDelayofClimaticInteractionsTheAmbiguitiesofCrossCorrelationandaNovelMeasureBasedonGraphicalModels} or more recently in \cite{thams2022instrumental, mogensen2022instrumental, runge2023causal, gerhardus2021characterization, gerhardus2023projecting}. However, the precondition  that the possibly very complex time series graph has been fully identified is generally very challenging to establish. In practice, domain experts may not know the exact time lags of process interactions and autodependencies, rendering many of the developed estimation techniques moot, or injecting high degrees of uncertainties into the estimation task. At the same time, experts may know the less complex process graph \cite{QuantifyingCausalPathwaysofTeleconnections, wcd-1-715-2020, UsingCausalEffectNetworkstoAnalyzeDifferentArcticDriversofMidlatitudeWinterCirculation, esd-11-17-2020}. Therefore, the \textbf{main goal} of this work and its direct descendants \cite{reiter2024causal,reiter2024asymptotic} is to develop a framework that allows for
\begin{itemize}
    \item a rigorous formalization and analysis of causal effects between SVAR processes and clear, informative visual representations at the process graph level in the time as well as in the frequency domain (the focus of this work);
    \item a precise formulation of spectral density based process graph discovery, which uses an algebraic characterisation $d$- and $t$-separation \cite{reiter2024causal} in the process graph, generalising previous results for linear Gaussian SEM's \cite{pearl2009causality, spirtes2000causation, sullivant2010trek}. Furthermore, in \cite{reiter2024causal} we present an identifiability theory for causal effects at the process graph level. Based on this theory, we show in \cite{reiter2024causal} that the recent latent-factor half-trek criterion \cite{10.1214/22-AOS2221} (for linear Gaussian SEM's) can be applied to the process graph of an SVAR process to decide the identifiability of causal effects at the level of the process graph.
    \item an examination of the asymptotic distribution of a frequency-domain causal effect estimator to assess the significance of frequency domain causal effects from finite data (the focus of the follow-up work \cite{reiter2024asymptotic}). This is the final ingredient to render the developed framework practically useful. As an illustration of this practical usefulness, \cite{reiter2024asymptotic} also contains an analysis of a real-world example from the Earth sciences, confirming a significant effect of solar activity variations on the Northern Atlantic Oscillation (NAO) at the 10-11 year time scale, the so-called 'solar cycle'. 
\end{itemize}

Regarding the present paper, the main contributions are as follows:
\begin{enumerate}
    \item In Theorem \ref{prop: summary SEM}, we rephrase a given SVAR process with latent processes as an SCM-like model of stochastic processes at the level of its finite process graph. We term this reformulation the \textit{structural equation process (SEP)} representation of the SVAR process. The SEP representation clearly separates any involved process into \emph{internal dynamics}, \emph{dynamics introduced by its parent processes} and \emph{dynamics due to latent confounding} (Section \ref{subsection: SEP representation}).
    \item Based on this reformulation, we generalise the notion of direct and controlled causal effects in linear Gaussian SCMs to direct and controlled causal effects between processes, see Definition \ref{def: direct effect-filter} resp. Definition \ref{def: ccf}. We express these causal effects between processes in terms of generalised path-coefficients of directed paths on the process graph, i.e., via a generalisation of Wright's path rule  (Section \ref{subsection: causal effects}).
    \item Furthermore, in Proposition \ref{prop: trek-rule processes} we describe the \textit{auto-covariance sequence (ACS)} of a SVAR process in terms of certain paths on the process graph, so-called treks. This description generalises the well known trek-rule for linear Gaussian SCMs (Section \ref{subsectio: trek-rule}). An important motivation for providing a process-level path- and trek-rule is that, in the context of SEMs, the path- and trek-rule have been instrumental in deriving conditions on the causal graph under which causal effects can be recovered from observational data in the presence of latent confounding, see e.g. \cite{10.1214/22-AOS2221, foygel2012half, weihs2018determinantal, kuroki2014measurement, drton2011global, leung2016identifiability}. 
    \item Finally, we transport these generalisations to the frequency domain. The resulting frequency domain expressions are more compact and closely related to those familiar from linear Gaussian SCMs. In addition, these expressions allow us to efficiently visualise the causal structure in SVAR processes at the level of the process graph (Section \ref{section: frequency domain}). 
\end{enumerate}
See also Table \ref{tab:summary} for a formula-based summary of these generalisations. Taken together, they provide a calculus that is fundamental to process-level causal effect computation and that allows one to disregard the details of the underlying time series graph. \\

The remainder of the paper is organized as follows. In Section \ref{section: summary}, we illustrate our main findings using a simple SVAR process as a toy model. In Section \ref{section: Preliminaries}, we recall the necessary preliminaries on mixed graphs, linear Gaussian SCMs and SVAR processes. The main part of the paper is covered in Section \ref{section: structural equation process} and \ref{section: frequency domain}, as outlined above. Section \ref{section: outlook} concludes with a summary and a selection of possible future research directions for which the findings formulated in this paper could be a starting point.
\section{Illustration of main results} \label{section: summary}
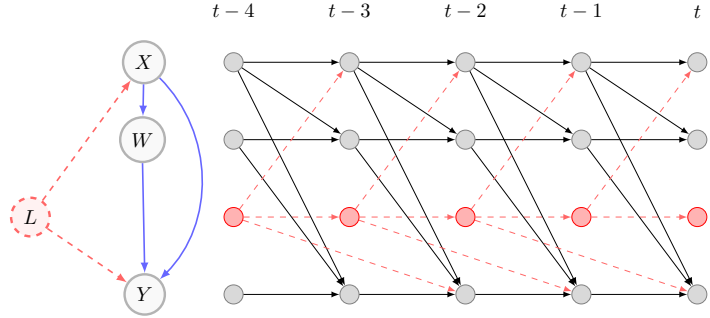
\begin{figure}
    \centering
    \resizebox{0.8\textwidth}{!}{
    \begin{tikzpicture}[
       ts_node/.style={circle, draw=gray, fill=gray!30},
       ts_node_latent/.style={circle, draw=red, fill=red!30},
       ts_node_cause/.style={circle, draw=red!60, fill=red!5, thick},
       ts_node_cause_control/.style={circle, draw=red!60, fill=red!30, ultra thick},
       ts_node_condition/.style={circle, draw=violet!60, fill=violet!30, thick},
       ts_node_target/.style={circle, draw=green!60, fill=green!60},
       ts_header/.style={rectangle, minimum size=1cm},
       snode/.style={circle, draw=gray!60, fill=gray!5, very thick},
       snode_latent/.style={circle, draw=red!60, fill=red!5, dashed, very thick},
       snode_cause/.style={circle, draw=red!60, fill=red!5, very thick},
       snode_condition/.style={circle, draw=violet!60, fill=violet!5, very thick},
       snode_target/.style={circle, draw=green!60, fill=green!5, very thick},
    ]
    \node[ts_header] (t_0) at (0,0) {$t$};
    \node[ts_node] (Xt_0) [below=0.2cm of t_0] {};
    \node[ts_node] (Wt_0) [below=of Xt_0] {};
    \node[ts_node_latent] (Zt_0) [below=of Wt_0] {};
    \node[ts_node] (Yt_0) [below=of Zt_0] {};

    \node[ts_header] (t_1) [left=of t_0] {$t-1$};
    \node[ts_node] (Xt_1) [below=0.2cm of t_1] {};
    \node[ts_node] (Wt_1) [below=of Xt_1] {};
    \node[ts_node_latent] (Zt_1) [below=of Wt_1] {};
    \node[ts_node] (Yt_1) [below=of Zt_1] {};

    \node[ts_header] (t_2) [left=of t_1] {$t-2$};
    \node[ts_node] (Xt_2) [below=0.2cm of t_2] {};
    \node[ts_node] (Wt_2) [below=of Xt_2] {};
    \node[ts_node_latent] (Zt_2) [below=of Wt_2] {};
    \node[ts_node] (Yt_2) [below=of Zt_2] {};

    \node[ts_header] (t_3) [left=of t_2] {$t-3$};
    \node[ts_node] (Xt_3) [below=0.2cm of t_3] {};
    \node[ts_node] (Wt_3) [below=of Xt_3] {};
    \node[ts_node_latent] (Zt_3) [below=of Wt_3] {};
    \node[ts_node] (Yt_3) [below=of Zt_3] {};

    \node[ts_header] (t_4) [left=of t_3] {$t-4$};
    \node[ts_node] (Xt_4) [below=0.2cm of t_4] {};
    \node[ts_node] (Wt_4) [below=of Xt_4] {};
    \node[ts_node_latent] (Zt_4) [below=of Wt_4] {};
    \node[ts_node] (Yt_4) [below=of Zt_4] {};

    \node[snode] (X) [left=of Xt_4] {$X$};
    \node[snode] (W) [left=of Wt_4] {$W$};
    \node[snode_latent] (Z) [left=3.cm of Zt_4] {$L$}; 
    \node[snode] (Y) [left=of Yt_4] {$Y$};


    \draw[->] (Xt_1) -- (Xt_0);
    \draw[->] (Yt_1) -- (Yt_0);
    \draw[->] (Wt_1) -- (Wt_0);
    \draw[->, draw=red!60, dashed] (Zt_1) -- (Zt_0);

    \draw[->] (Xt_2) -- (Xt_1);
    \draw[->] (Yt_2) -- (Yt_1);
    \draw[->] (Wt_2) -- (Wt_1);
    \draw[->, draw=red!60, dashed] (Zt_2) -- (Zt_1);

    \draw[->] (Xt_3) -- (Xt_2);
    \draw[->] (Yt_3) -- (Yt_2);
    \draw[->] (Wt_3) -- (Wt_2);
    \draw[->, draw=red!60, dashed] (Zt_3) -- (Zt_2);

    \draw[->] (Xt_4) -- (Xt_3);
    \draw[->] (Yt_4) -- (Yt_3);
    \draw[->] (Wt_4) -- (Wt_3);
    \draw[->, draw=red!60, dashed] (Zt_4) -- (Zt_3);

    \draw[->] (Xt_1) -- (Wt_0);
    \draw[->] (Xt_1) -- (Yt_0);
    \draw[->] (Wt_1) -- (Yt_0);
    \draw[->, draw=red!60, dashed] (Zt_1) -- (Xt_0);

    \draw[->] (Xt_2) -- (Wt_1);
    \draw[->] (Xt_2) -- (Yt_1);
    \draw[->] (Wt_2) -- (Yt_1);
    \draw[->, draw=red!60, dashed] (Zt_2) -- (Xt_1);

    \draw[->] (Xt_3) -- (Wt_2);
    \draw[->] (Xt_3) -- (Yt_2);
    \draw[->] (Wt_3) -- (Yt_2);
    \draw[->, draw=red!60, dashed] (Zt_3) -- (Xt_2);

    \draw[->] (Xt_4) -- (Wt_3);
    \draw[->] (Xt_4) -- (Yt_3);
    \draw[->] (Wt_4) -- (Yt_3);
    \draw[->, draw=red!60, dashed] (Zt_4) -- (Xt_3);

    \draw[->, draw=red!60, dashed] (Zt_2) -- (Yt_0);

    \draw[->, draw=red!60, dashed] (Zt_3) -- (Yt_1);
    
    \draw[->, draw=red!60, dashed] (Zt_4) -- (Yt_2);

    \draw[->, thick, draw=blue!60] (X) to [out=315, in=45](Y);
    \draw[->, thick, draw=blue!60] (X) -- (W);
    \draw[->, thick, draw=blue!60] (W.south) -- (Y.north);
    \draw[->, thick, draw=red!60, dashed] (Z) -- (X);
    \draw[->, thick, draw=red!60, dashed] (Z) -- (Y);
    \end{tikzpicture}
    }
    \caption{The graph on the right shows a finite subgraph of the infinite time series graph of a SVAR process that consists of three observed processes indexed by $X,W,Y$ and a latent process indexed by $L$. The first row models the states of process $X$ at time-steps $t-4, \dots , t$. The horizontal arrows model the auto-dependencies in the respective process. The arrows between different lines model cross-dependencies. The infinite time series graph extends infinitely to the left and right by repeating the structure depicted in this figure. The graph on the left is the process graph. It collapses the time series graph along the temporal axes. Two distinct nodes in the process graph are connected by a directed link if and only if there is at least one lagged or contemporaneous edge between the respective processes and with the respective direction in the  time series graph.}
    \label{fig:time_series_graph}
\end{figure}

We exemplify graphical time series models on a system composed of three observed processes $\X_X,\X_W,\X_Y$ and a latent process $\X_L$. Figure \ref{fig:time_series_graph} depicts the time series graph together with its associated process graph. The vertices of the process graph consist of the three observed processes-indices and the latent process-index. In the process graph, a directed link connects two distinct processes if and only if there is a time-lagged or contemporaneous link in the time series graph between the two processes. For each link in the process graph we construct two versions of a generalised link-coefficient, namely, a time and frequency domain version, see Figure \ref{fig:summary_graph_parameterizations}. For each version, we now give qualitative description.  

\textbf{Time domain.} (Figure \ref{fig:summary_graph_parameterizations} a.)). Consider the time series graph in Figure \ref{fig:time_series_graph}. The direct causal link between $X$ and $Y$, together with the assumption that the underlying data-generating process is linear, implies that for any given time-point $t$ we can identify part of $\X_Y(t)$ as the "direct" contribution of the random variables $\X_X(t-k)$ with $k \geq0$. This contribution is given as a linear function determined by only those links in the time series graph that go directly from $X$ to $Y$, and those that encode the auto-dependencies of $Y$. We term this function the direct effect-filter, denoted as $\dirtmp{X}{Y}$, and assign it to the link $X \to Y$ in Figure \ref{fig:summary_graph_parameterizations}a.). Applying this function to the process $\X_X$ gives the process $\dirtmp{X}{Y}\ast \X_X$, which is the component of $\X_Y$ that is directly determined by $\X_X$. This notion of causal effects is based on our previous work \cite{reiter2022causal}. Recent works on instrumental processes \cite{thams2022instrumental, mogensen2022instrumental} also employ a related notion of causal effects.

\textbf{Frequency domain.} (Figure \ref{fig:summary_graph_parameterizations} b.)). Instead of viewing a process as a time-ordered sequence of dependent random states, it can be represented as a superposition of oscillations, each at a particular frequency. This view is also known as the \textit{spectral representation} of a stochastic process, see \cite{brockwell2009time}. The amplitude of a given frequency is a random variable, so that, roughly speaking, the amplitudes of two different frequencies are independent of each other, see \cite[Section 11.8]{brockwell2009time} for a precise formulation. The so-called \textit{spectral density} measures how the temporal variability of a process is distributed across frequencies. The Fourier-Transformation of $\dirtmp{X}{Y}$ denoted by $\dirfrq{X}{Y}$ in Figure \ref{fig:summary_graph_parameterizations}.b.), can be considered as a frequency domain link coefficient. It quantifies the relationship between the spectral density of the process $ \X_X$ and the spectral density of $\dirtmp{X}{Y}\ast \X_X$, the latter of which we have identified as the component of $\X_Y$ that is directly determined by $\X_X$  (see previous paragraph). In Section \ref{section: frequency domain}, we express $\dirfrq{X}{Y}$ as the restriction of a complex rational polynomial to the complex unit circle, such that the parameters of this rational polynomial are given by the coefficients of the underlying SVAR process.  

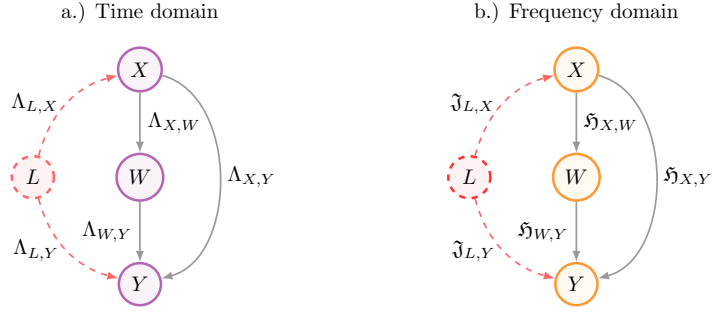
\begin{figure}
    \centering
    \resizebox{0.8\textwidth}{!}{
    \begin{tikzpicture}[
        time_dom/.style={circle, draw=violet!60, fill=violet!5, very thick, minimum size=7mm},
        time_dom_latent/.style={circle, draw=red!60, fill=red!5, dashed, very thick, minimum size=7mm},
        freq_dom/.style={circle, draw=orange!80, fill=orange!5, very thick, minimum size=7mm},
        freq_dom_latent/.style={circle, draw=red!80, fill=red!5, dashed, very thick, minimum size=7mm},
        window_dom/.style={circle, draw=orange!60, fill=orange!5, very thick, minimum size=7mm},
        header/.style={rectangle, minimum size=7mm}
    ]
        \node[header] (time_dom) at (0.,0.) {\text{a.) Time domain}};
        \node[time_dom] (x_time) [below=0.2cm of time_dom] { $X$};
        \node[time_dom] (w_time) [below=of x_time] { $W$};
        \node[time_dom_latent] (z_time) [left=of w_time] {$L$};
        \node[time_dom] (y_time) [below=of w_time] {$Y$};

        \draw[->, draw=gray!80, thick] (x_time.south) -- node[right] {$\dirtmp{X}W{}$} (w_time.north);
        \draw[->, draw=gray!80,thick] (w_time.south) -- node[left] {$\dirtmp{W}{Y}$} (y_time.north);
        \draw[->, draw=red!60,thick, dashed] (z_time) to [out=75, in=195] node[left] {$\dirtmp{L}{X}$} (x_time);
        \draw[->, draw=red!60, dashed, thick] (z_time) to [out=285, in=165] node[left] {$\dirtmp{L}{Y}$} (y_time);
        \draw[->, draw=gray!80, thick] (x_time) to [out=345, in=15] node[right] {$\dirtmp{X}{Y}$} (y_time);

        
        \node[header] (freq_dom) [right=4.cm of time_dom] {\text{b.) Frequency domain}};
        \node[freq_dom] (x_freq) [below=.2cm of freq_dom] { $X$};
        \node[freq_dom] (w_freq) [below=of x_freq] { $W$};
        \node[freq_dom_latent] (z_freq) [left=of w_freq] {$L$};
        \node[freq_dom] (y_freq) [below=of w_freq] {$Y$};

        \draw[->, draw=gray!80, thick] (x_freq.south) -- node[right] {$\dirfrq{X}W{}$} (w_freq.north);
        \draw[->, draw=gray!80,thick] (w_freq.south) -- node[left] {$\dirfrq{W}{Y}$} (y_freq.north);
        \draw[->, draw=red!60,thick, dashed] (z_freq) to [out=75, in=195] node[left] {$\lfrq{L}{X}$} (x_freq);
        \draw[->, draw=red!60, dashed, thick] (z_freq) to [out=285, in=165] node[left] {$\lfrq{L}{Y}$} (y_freq);
        \draw[->, draw=gray!80, thick] (x_freq) to [out=345, in=15] node[right] {$\dirfrq{X}{Y}$} (y_freq);

    \end{tikzpicture}
    }
    \caption{These graphs are the process graphs from the example process in Figure \ref{fig:time_series_graph}. Part a.) shows the process graph in which the edges are annotated by the time domain link-coefficients. Part b.) Shows the process graph in which the edges are annotated by the frequency domain link coefficients.}
    \label{fig:summary_graph_parameterizations}
\end{figure}

 The filters we assigned to the edges in Figure \ref{fig:summary_graph_parameterizations}.a.) constitute the linear model for the process graph. The multiplication operation in SEMs generalises to an operation on $\mathbb{Z}$-indexed stochastic processes, that is, the above mentioned convolution, denoted by the symbol ``$\ast$''. If $\X = (\X(t))_{t\in \Z}$ is a stationary stochastic process and $\Lambda = (\Lambda(s))_{s \in \Z}$ an absolutely summable filter, then the convolution $\Lambda \ast \X$ is likewise a stationary stochastic process. Assuming that the time series graph in Figure\ref{fig:time_series_graph} is parameterised by a SVAR-process, we can decompose each of the processes $\X_X, , \X_L, \X_W, \X_Y$ into internal and externally imposed dynamics, namely
\begin{align*}
    \X_X &= \ltmp{L}{X} \ast \X_L + \X_X^\internal \\ 
    \X_W &= \dirtmp{X}{W} \ast \X_X + \dirtmp{Y}{W}\ast \X_Y + \X_W^\internal\\
    \X_L &= \X_L^\internal\\
    \X_Y &= \dirtmp{W}{Y} \ast \X_W + \dirtmp{X}{Y}\ast \X_X + \ltmp{L}{Y} \ast \X_L + \X_Y^\internal.
\end{align*}
The internal dynamics $\X_X^\internal, \X_W^\internal, \X_L^\internal, \X_Y^\internal$ represent mutually independent stochastic processes that need not be white noise processes and include the auto-dependency structure in SVAR processes. Let us combine the observed processes into the multivariate process $\X_\mathbf{O} = (\X_X, \X_W, \X_Y)$, and similarly we combine the internal dynamics of the observable processes as $\X_\mathbf{O}^\internal = (\X_X^\internal, \X_W^\internal, \X_Y^\internal)$. We arrange the filters of the edges between observed processes in the matrix-valued filter $\Dirtmp$ and the filters of the edges pointing from the latent to an observed process in the matrix-valued filter $\Gamma$ so that the equations above can be put together and applied recursively, which yields the expression
\begin{align*}
    \X_\mathbf{O} &= \Dirtmp^\top \ast \X_\mathbf{O} + \X_\mathbf{O}^\internal + \Gamma^\top \ast \X_L \\
    &= (\sum_{k \geq 0} \Dirtmp^k)^\top \ast (\X_\mathbf{O}^\internal + \Gamma \ast \X_L).
\end{align*}
The \textit{auto-covariance sequence (ACS)} of the stochastic process $\X_\mathbf{O}$, written $\C_\mathbf{O}$, is the generalisation of the covariance matrix $\Sigma$ to SEMs. Fourier transforming the ACS gives the spectral density of $\mathbf{O}$, i.e, a complex valued function defined for every frequency $\omega \in [0, 2\pi)$. We will see that the spectral density $\mathcal{S}_\mathbf{O}$ of the observable processes $\mathbf{O}$ is given in terms of the causal parameter $\Dirfrq$, which is the Fourier-Transformation of $\Dirtmp$, and the spectral density $\Sp{\mathbf{O}}^\linternal \coloneqq \Sp{\X_\mathbf{O}^\internal + \Gamma^\top \ast \X_L}$ by the formula in the third row and third column of Table \ref{tab:summary}.
In Section \ref{section: frequency domain}, we use this expression for the spectral density to derive a frequency domain generalisation of the trek-rule at the level of the process graph. This explicates that at each frequency the structure of the spectral density is dictated only by the process graph, while the complexity due to the lag-structure of a SVAR process resolves into the specific functional forms of $\Dirfrq$ and $\Sp{\mathbf{O}}^\linternal$. 
\begin{table}
    \centering
    \resizebox{\textwidth}{!}{
    \begin{tabular}{c|c||c|c}
        & \makecell{\textbf{lin. Gaussian SCM }\\ (Section \ref{section: Preliminaries})} & \makecell{\textbf{SEP time domain} \\ (Section \ref{section: structural equation process})} & \makecell{\textbf{SEP frequency domain } \\ (Section \ref{section: frequency domain})} \\ \hline
        \makecell{ \\ Causal model \\ (parameters for $D$) } & \makecell{Direct effect-matrix \\  $A\in \R^{|\mathbf{O}| \times |\mathbf{O}|}$ } & \makecell{Direct effect-filter \\ $ \Lambda : \Z \to \R^{|\mathbf{O}| \times |\mathbf{O}|}$} & \makecell{Direct transfer-function \\$\Dirfrq: [0, 2 \pi) \to \Cplx^{|\mathbf{O}| \times |\mathbf{O}|}$}\\ \hline
        \makecell{ \\ Hidden confounding \\ (parameters for $B$)} & \makecell{Noise covariance \\ $\Omega \in \R^{|\mathbf{O}| \times |\mathbf{O}|}$} & \makecell{Noise ACS \\ $\C_\mathbf{O}^\linternal: \Z \to \R^{|\mathbf{O}| \times |\mathbf{O}|} $} & \makecell{Noise spectral density\\$\Sp{\mathbf{O}}^\linternal: [0, 2\pi) \to \Cplx^{|\mathbf{O}| \times |\mathbf{O}|}$ } \\ \hline \hline
        \makecell{\\ Observational \\ covariance} & \makecell{$\Sigma =$ \\$ (I-A)^{-\top} \Omega (I-A)^{-1}$} & \makecell{$\C_\mathbf{O}=$ \\ $(\Dirtmp^\infty)^\top \ast\C^\linternal_\mathbf{O}\tcvl\Dirtmp^\infty$} & \makecell{$\Sp{\mathbf{O}}=$\\ $(\mathcal{I} - \Dirfrq)^{-\top} \Sp{\mathbf{O}}^\linternal (\mathcal{I} - \Dirfrq)^{-\ast}$} \\ \hline
        \makecell{\\ Path-rule \\ (path-coefficients)} & \makecell{$(I-A)^{-1}_{V,W}=$ \\ $\sum_{\pi \in \mathrm{P}(V,W)}A^{(\pi)}$} & \makecell{ $\Dirtmp^\infty_{V,W}=$ \\ $\sum_{\pi \in \mathrm{P}(V,W)} \Dirtmp^{(\pi)}$} & \makecell{$(\mathcal{I}- \Dirfrq)^{-1}_{V,W}=$ \\ $ \sum_{\pi \in \mathrm{P}(V,W)} \Dirfrq^{(\pi)}$} \\ \hline
        \makecell{\\ Trek-rule \\ (trek-monomials)} & \makecell{$\Sigma_{V,W}=$ \\ $\sum_{\pi \in \T{}(V,W)} \Sigma^{(\pi)}$} & \makecell{$\C_{V,W}=$ \\ $\sum_{\pi \in \T{}(V,W)} \C^{(\pi)}$} & \makecell{$\Sp{V,W}=$ \\ $ \sum_{\pi \in \T{}(V,W)} \Sp{}^{(\pi)}$}
    \end{tabular}}
    \caption{In this table we compare the known formulae for linear Gaussian SCMs parameterising a mixed causal graph $G=(\mathbf{O}, D, B)$ over a finite vertex set $\mathbf{O}$ with directed edges $D$ and bidirectional edges $B$ with their generalisations to SVAR processes represented as structural equation processes in both the time- and frequency domain.}
    \label{tab:summary}
\end{table}

A central problem in causal inference is to decide whether causal effects can be identified from observational data and, given that this is possible, how to compute them. The following example suggests that the process graph of a SVAR process already contains enough information to formulate graphical criteria for identifying causal effects and corresponding algebraic relations by which these effects can be computed from observations.   
\begin{example}["Front-door" \cite{pearl2009causality} in the frequency domain]\label{ex: illustration main result}
    The relations among the entries in the spectral density, see Table \ref{tab:summary}, let us recover part of the causal parameter $\Dirfrq$ from the spectral density $\Sp{\mathbf{O}}$ of the observed processes. Specifically, the link-coefficients $\dirfrq{X}{W}$ and $\dirfrq{W}{Y}$ can be computed frequency-wise as follows
    \begin{align} \label{eq: front-door frequency domain}
         \dirfrq{X}{W} &= \frac{\Sp{W,X}}{\Sp{X}} & \dirfrq{W}{Y} &= \frac{\Sp{W, Y} - \dirfrq{X}{W} \Sp{X,Y}}{\Sp{W}- 2\mathrm{Re}(\dirfrq{X}{W} \Sp{X,W})+|\dirfrq{X}{W}|^2 \Sp{X}}.
    \end{align}
    In this case we do not have to worry about zeros in the denominator, as follows from the stability assumptions we are imposing on SVAR processes. That means, for every possible SVAR process that parameterises the summary graph in Figure \ref{fig:summary_graph_parameterizations} we can recover the functions $\dirfrq{X}{W}$ and $\dirfrq{W}{Y}$. Note that these relations only use the structure of the process graph. The additional complexity stemming from the temporal structure does not affect these relations.
\end{example}

\section{Preliminaries}\label{section: Preliminaries}
We begin this section by clarifying the necessary notions about mixed graphs. After that we recall some well known facts about linear Gaussian SCMs with latent factors, as studied in e.g. \cite{10.1214/22-AOS2221, bollen1989structural}, which we seek to generalise to SVAR processes at the level of their process graphs. Finally, we spell out the concepts and notions on SVAR processes that we need throughout the main part of the this paper. 
\subsection{Graphical notions}
A mixed graph is a triple $G=(V, D, B)$, which consists of a finite set of vertices $V$ and two sets of edges $D, B \subset V \times V$. An element $(v,w) \in D$ is called a directed edge on $G$ and is denoted as $v \to w$. We write $v \not \to w$ to signify that $(v,w) \notin D$. The edges in $B$ are bidirectional, which means that a pair $(v,w) \in B$ if and only if $(w,v) \in B$. A bidirectional edge between two nodes $v$ and $w$ is denoted by $v \leftrightarrow w$, and with $v \not \leftrightarrow w$ we express that there is no bidirectional edge between $v$ and $w$.  

In this work, a \textit{path} in a mixed graph $G$ is a sequence of consecutive edges $\pi = (e_1, \dots , e_k)$, where $e_i$ is either $v_i \to v_{i+1}$, $v_{i} \leftarrow v_{i+1}$ or $v_i \leftrightarrow v_{i+1}$. In particular, edges and vertices can appear more than once on a path. Irrespective of which form the edges take, the path $\pi$ is said to go from $v_1$ to $v_{k+1}$. A path is called a directed path if $e_i = v_i \to v_{i+1}$ for all $1\leq i \leq k$. We denote the set of all directed paths going from $v$ to $w$ by $\mathcal{P}(v,w)$. By convention, we include the empty path in $\mathcal{P}(v,v)$. So with our path term we follow the convention as used in e.g. \cite{foygel2012half, 10.1214/22-AOS2221}. 

A path is said to be a \textit{trek}, see e.g. \cite{sullivant2010trek, foygel2012half}, if it has one of the following two types of structure
\begin{align}\label{trek-type I}
    v_{l}^{L} \leftarrow \cdots \leftarrow v_{1}^L \leftarrow v \to v_1^R \to \cdots \to v_r^R\\
    \label{trek-type II}
    v_l^L \leftarrow \cdots \leftarrow v_0^L \leftrightarrow v_0^R \to \cdots \to v_r^R, 
\end{align}
where $\{v_i^L\}$ or $\{v_j^R\}$ in (\ref{trek-type I}) can be empty. In this case the trek is directed path. 
If $\pi$ is a trek of type (\ref{trek-type I}), then its left-hand side is $\trekL(\pi) \coloneqq \{ v, v_1^L \dots, v_l^L \}$ and its right-hand side is $\trekR(\pi) \coloneqq \{ v, v_1^R, \dots, v_r^R \}$. If $\pi$ is a trek of type (\ref{trek-type II}), then its left-hand side is $\trekL(\pi) \coloneqq \{v_0^L, \dots, v_l^L\}$ and its right-hand side is $\trekR(\pi) \coloneqq \{v_0^R, \dots, v_r^R \}$, see \cite{10.1214/22-AOS2221, foygel2012half}. We represent the set of all treks from $v$ to $w$ with $\mathcal{T}(v,w)$.   

\subsection{Preliminaries on linear SEMs}
 A SEM with latent factors is given by a collection of $m$ observed variables $\mathbf{X} = (X_v)_{v\in V}$, a collection of $d$ mutually independent Gaussian latent variables $\mathbf{L}=(L_{h})_{h \in H}$, an $m \times m$ matrix $A$, an $d \times m$ matrix $C$ and an $m$-variate mutually independent Gaussian noise vector $\eta \sim \mathcal{N}(\mathbf{0}, \Omega_\diag)$ such that
\begin{equation} \label{eq: SEM}
    \begin{split}
        \mathbf{X} &= A^\top \mathbf{X} + \eta + C^\top \mathbf{L} \\
        &= (I - A)^{-\top} (\eta + C^\top \mathbf{L}).
    \end{split}
\end{equation}
In particular, $I-A$ is required to be invertible. Furthermore, the latent vector and the noise vector are assumed to be independent of each other. Then the observed variables $\mathbf{X}$ follow a zero-mean Gaussian distribution, whose covariance is given by
\begin{align}\label{eq: covariance Gaussian SEM}
    \Sigma = (I- A)^{- \top} (\Omega_\diag + C^\top \Sigma_\mathbf{L} C) (I - A)^{-1}.
\end{align}

Let $G=(V, D,B)$ be a mixed graph consisting of a set of directed edges $D$ and a set of bidirectional edges $B$. Following \cite{10.1214/22-AOS2221, foygel2012half}, we denote by $\R^D_\reg$ the space of all $m\times m$-dimensional matrices $A$ such that $A_{v,w} = 0$ if $v \not \to w$. Furthermore, we denote by $\PD_m$ the set of all positive definite symmetric (hermitian if considered over the complex numbers) $m\times m$ matrices and by $\PD(B)$ the set of matrices $\Omega=(\Omega_{v,w})\in \PD_m$ such that $\Omega_{v,w}= 0$ if $v \neq w$ and $v \not \leftrightarrow w$. The SEM (\ref{eq: SEM}) is said to parameterise the mixed graph $G$ if $A \in \R^D_\reg$ and $\Omega \coloneqq \Omega_\diag + C^\top \Sigma_L C \in \PD(B)$. 

A fundamental observation, known as the path-rule \cite{wright1934method}, is that causal effects in linear SEMs are characterised by directed paths on $G$ and the entries in the parameter matrix $A$. If $\pi = v_1 \to \cdots \to v_{k+1}$ is a directed path on $G$, then its \textit{path-coefficient} $A^{(\pi)}$ is the product of the entries in $A$ that are associated with the links composing $\pi$, i.e.,   
\begin{align*}
    A^{(\pi)} &\coloneqq \prod_{i=1}^{k} A_{v_{i}, v_{i+1}}.
\end{align*}
The path-rule exhibits the entries in $(I-A)^{-1}$ as sums of path-coefficients, i.e., 
\begin{align}\label{eq: path-rule}
    (I - A)^{-1}_{v,w} &= \sum_{\pi \in \mathcal{P}(v,w)} A^{(\pi)}. 
\end{align}
Similarly, the so-called \textit{trek-rule} \cite{wright1934method} expresses the covariance structure in terms the parameters of $A$, the covariance $\Omega = \Omega_\diag + C^\top \Sigma_\mathbf{L} C$ and the treks on $G$. Suppose $\pi$ is a trek on $G$, then its \textit{trek-monomial} \cite{foygel2012half, sullivant2010trek} is defined as
\begin{align*}
    \Sigma^{(\pi)} &\coloneqq \begin{cases}
        A^{(\trekL(\pi))}\Omega_{v,v} A^{(\trekR(\pi))}  & \text{if $\pi$ is of type (\ref{trek-type I})} \\
        A^{(\trekL(\pi))} \Omega_{v_0^L, v_0^R} A^{(\trekR(\pi))}  & \text{if $\pi$ is of type (\ref{trek-type II})}
    \end{cases}
\end{align*}
The trek-rule characterises the entries in $\Sigma$ as sums of trek-monomials on $G$, i.e.,  
\begin{align}\label{eq: trek-rule}
    \Sigma_{v,w} &= \sum_{\pi \in \mathcal{T}(v,w)} \Sigma^{(\pi)}. 
\end{align}

Previous research, see e.g. \cite{10.1214/22-AOS2221, foygel2012half, weihs2018determinantal, kuroki2014measurement, drton2011global, leung2016identifiability, yao2022algebraic}, has explored conditions on the mixed graph $G$ that allow to recover the parameter matrix $A$ as rational polynomials evaluated on the covariance $\Sigma$. Some of these conditions exploit the trek-rule as a bridge between the graphical structure $G$ and the structure of the covariance $\Sigma$. Another line of research is concerned with the problem of inferring the structure of $G$ from $\Sigma$. For directed acyclic graphs $G=(V, D, \emptyset)$, the connection between the structure of $G$ and the covariance $\Sigma$ lies in a correspondence  \cite{lauritzen1996, pearl1990independence} between \textit{d-separation} \cite{pearl1990independence} and conditional independence statements, which can be expressed as rank conditions on certain submatrices of $\Sigma$ \cite{sullivant2010trek, lauritzen1996}. In general mixed graphs, the graphical notion of \textit{t-separation} \cite{sullivant2010trek} generalises d-separation. Using the trek-rule, it has been found in \cite{sullivant2010trek} that also t-separation statements correspond to rank conditions on associated submatrices of $\Sigma$. 

\begin{example}[Front-door \cite{pearl2009causality} effect-identification as a special case of Example \ref{ex: illustration main result}]\label{ex: sem}
To demonstrate that the concepts developed in this paper generalise commonly known notions about SEMs, we consider an SEM with latent factors whose causal graph (Figure \ref{fig:intro_causal_graph}) is the same as the process graph considered in the previous section (Figure \ref{fig:time_series_graph}).  
 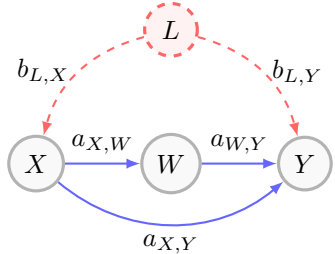
\begin{figure}
     \centering
     \begin{tikzpicture}[
        main_node/.style={{circle, draw=gray!60, fill=gray!5, very thick, minimum size=7mm}},
        latent_node/.style={circle, draw=red!60, fill=red!5, dashed, very thick, minimum size=7mm}
     ]
        \node[latent_node] (z) at (0,0) { $L$};
        \node[main_node] (w) [below=of z] { $W$};
        \node[main_node] (x) [left=of w] {$X$};
        \node[main_node] (y) [right=of w] {$Y$};

        \draw[->, thick, draw=red!60, dashed] (z) to [out=195, in=75] node[left] {$b_{L,X}$} (x);
        \draw[->, thick, draw=red!60, dashed] (z) to [out=345, in=105] node[right] {$b_{L,Y}$} (y);
        \draw[->, thick, draw=blue!60] (x.east) -- node[above] {$a_{X,W}$} (w.west);
        \draw[->, thick, draw=blue!60] (w.east) -- node[above] {$a_{W,Y}$} (y.west);
        \draw[->, thick, draw=blue!60] (x) to [out=320, in=220] node[below] {$a_{X,Y}$} (y);
     \end{tikzpicture}
     \caption{This graph is the causal graph of the SEM considered in Example \ref{ex: sem}.}
     \label{fig:intro_causal_graph}
 \end{figure}

In this SEM, the causal effect of $X$ on $Y$ is given by the sum $a_{X,W}a_{W,Y} + a_{X,Y}$. The first summand is called the indirect effect of $X$ on $Y$, as it corresponds to the path $X\to W\to Y$. The second is referred to as the direct effect since it corresponds to the direct link $X\to Y$. 

The link-coefficients $a_{X,W}$ and $a_{W,Y}$ can be expressed in terms of rational functions of the entries in $\Sigma$ as follows
\begin{align} \label{eq: effect identification sem}
    a_{X,W} &= \frac{\sigma_{W,X}}{\sigma_{X}} & a_{W,Y} &= \frac{\sigma_{W,Y} - a_{X,W}\sigma_{X,Y}}{\sigma_{W} + 2a_{X,W}\sigma_{W,X} + a_{X,W}^2\sigma_X}.
\end{align}
Observe that these equations structurally coincide with the frequency domain equations (\ref{eq: front-door frequency domain}). Actually, the equations (\ref{eq: front-door frequency domain}) specialise to equations (\ref{eq: effect identification sem}) if the underlying time series model has only contemporaneous effects, namely, if the associated time series graph in Figure \ref{fig:time_series_graph} has only vertical arrows. 
\end{example}

\subsection{Structural vector autoregressive processes}
In this section, we recall the necessary preliminaries on linear time series models and some notions of causal inference in discrete-time systems. 
\begin{definition}[Time series graph, \cite{runge2019detecting}]\label{def: time series graph}
     Let $\mathbf{V}= \{V_1, \dots, V_{m}\}$ be a finite set of process indices. An infinite graph $\mathcal{G}= (\mathbf{V} \times \mathbb{Z}, \mathcal{D})$ is called a \textit{time series graph (TSG)} if the directed edges $\mathcal{D}$ are such that (1) the temporal ordering on the integers is respected, i.e., $V(t-k) \to W(t) \in \mathcal{D} $ only if $k \geq 0$, (2) a contemporaneous link $V(t) \to W(t) \in \mathcal{D}$ only if $V\neq W$, (3) there is a $p \geq 0$ such that $V(t-k) \to W(t) \in \mathcal{D}$ implies $k \leq p$ and (4) if $V(t-k)\to W(t) \in \mathcal{D}$, then $V(t-k +\tau)\to W(t+\tau)\in \mathcal{D}$ for every $\tau \in \mathbb{Z}$. 
\end{definition} 

\begin{definition}[Process graph]
	Let $\mathcal{G} = (\mathbf{V}, \mathcal{D})$ be a time series graph. Its \textit{process graph}, written $S(\mathcal{G}) = (\mathbf{V}, S(\mathcal{D}))$, contains a directed link $V \to W$ if $V \neq W$ and there is a non-negative integer $k\geq 0$ such that $V(t-k) \to W(t) \in \mathcal{D}$.
\end{definition}
What we call a process graph is called a summary graph in \cite[Chapter 10]{peters2017elements}. Since our goal is to parameterise this graph with an SCM of processes, we choose to call it a process graph. 

A \textit{latent structure} on a time series graph $\mathcal{G} = (\mathbf{V} \times \Z, \mathcal{D})$ is given by a partition $\mathbf{V} = \mathbf{O} \cup \mathbf{L}$ such that for any $V \in \mathbf{O}$ and $H \in \mathbf{L}$ one has that $V \to H \notin S(\mathcal{D})$. The elements in $\mathbf{O}$ represent the observed factors and the elements in $\mathbf{L}$ signify the latent factors. A time series graph equipped with a latent-component structure shall be referred to as a \textit{latent-component time series graph}. The set of directed links in a latent-component time series graph can be partitioned into the edges between the observed processes $\mathcal{D}^\mathbf{O}$, the set of edges between the latent processes $\mathcal{D}^\mathbf{L}$, and the set of edges pointing from a latent to an observed process $\mathcal{D}^{\mathbf{L}, \mathbf{O}}$. We term the process graph of a latent-component time series graph \textit{latent-component process graph}. The \textit{latent projection} \cite{maathuis2019handbook} of the latent-component process graph $S(\mathcal{G}) = (\mathbf{O} \cup \mathbf{L}, S(\mathcal{D}^\mathbf{L}) \cup S(\mathcal{D}^\mathbf{L,O}) \cup S(\mathcal{D}^\mathbf{O}))$ is a mixed graph $G' = (\mathbf{O}, S(\mathcal{D}^\mathbf{O}), B)$ over the observed processes $\mathbf{O}$. The set of bidirectional edges $B$ is such that $V \leftrightarrow W$ if and only if $V \neq W$ and there is a trek $\pi$ from $V$ to $W$ on the process graph $G$ that passes through at least one latent process and exactly two observed processes, i.e., $V$ and $W$.  

\begin{definition}[SVAR process]
	Let $\Phi = ( \Phi(k))_{0 \leq k \leq p} $ be a finite sequence of $m \times m$-dimensional matrices and $\mathbf{w} \in \mathbb{R}^{m\times m}$ a diagonal matrix. The parameter pair $(\Phi, \mathbf{w})$ specifies a $m$-dimensional real valued \textit{structural vector autoregressive process (SVAR)} \newline $\X_\mathbf{V} = ( \X_V(t))_{t \in \Z, V \in \mathbf{V}}$ of order $p$ if for every $t$ 
	\begin{displaymath}
		\X_\mathbf{V}(t) = \sum_{k = 0}^p \Phi^\top(k)\X_\mathbf{V}(t-k) + \eta_\mathbf{V}(t),
	\end{displaymath}
    where $\eta_\mathbf{V}(t)\sim \mathcal{N}(\mathbf{0}, \mathbf{w})$ are such that the covariance $\Ep[\eta_{V}(t) \eta_W(s)] = \mathbf{w}_{V,W}$ if and only if $s=t$ and zero otherwise. 
    Let $\mathcal{G} = (\mathbf{V}\times \mathbb{Z}, \mathcal{D})$ be a causal time series graph over a set of processes $\mathbf{V}$. We say that a SVAR process $(\Phi, \mathbf{w})$ is consistent with $\mathcal{G}$ if $V(t-k) \not\to W(t)$ implies that the entry in the $V$-th row and $W$-th column of $\Phi(k)$, denoted by $\phi_{V,W}(k)$, is zero. We call a SVAR process consistent with the process graph $S(\mathcal{G})$ if $V \not\to W$ implies that $\phi_{V,W}(k) = 0$ for all $0 \leq k \leq p$. 
\end{definition}    
For a sub-process $\mathbf{U}\subset \mathbf{V}$ its \textit{mean sequence} and \textit{auto-covariance sequence} (ACS) are defined as follows
\begin{align*}
    \mu_\mathbf{U}(t) &\coloneqq \Ep[\X_\mathbf{U}(t)], \text{ } t \in \Z, \\
    \mathbf{C}_{\mathbf{U}}(t,s) & \coloneqq \Ep[(\X_\mathbf{U}(t)- \mu_\mathbf{U}(t)) (\X_\mathbf{U}(t-s)- \mu_\mathbf{U}(t-s))^\top],\text{ } t,s \in \mathbb{Z}.
\end{align*}
For the remainder of this work, all SVAR processes are assumed to be \textit{stable} \cite{lutkepohl2005new, brockwell2009time}. It then follows from \cite[Proposition 2.1]{lutkepohl2005new} that their mean and auto-covariance-sequences are independent of $t$. Furthermore, all processes in this work have a zero-mean sequence, as all processes are assumed to be stable and the innovation terms $\eta_\mathbf{V}(t)$ are zero-mean vectors.

\section{The structural equation process representation} \label{section: structural equation process}
In this section, we fix a latent-component time series graph $\mathcal{G}= (\mathbf{V}\times \Z, \mathcal{D})$, where $\mathbf{V} = \mathbf{O} \cup \mathbf{L}$ consists of $|\mathbf{O}| = m$ observed and $|\mathbf{L}| = d$ latent processes, and a SVAR-process $\X_{\mathbf{O} \cup \mathbf{L}}$ of order $p$ that is consistent with $\mathcal{G}$ and specified by the parameter-pair $(\Phi, \mathbf{w})$. We denote the latent-component process graph of $\mathcal{G}$ by $G=(\mathbf{O} \cup \mathbf{L}, D)$ and its latent projection by $G'=(\mathbf{O}, D^\mathbf{O}, B)$. We begin this section by recasting the observed SVAR process $\X_\mathbf{O}$ as a structural linear model of processes on the directed part of the projected process graph $G'$ plus the contribution due to the latent process $\X_\mathbf{L}$. We then generalise the notion of causal effects as well the trek-rule to this SCM of processes on the process graph.
\subsection{Reformulating a SVAR process as a structural equation process} \label{subsection: SEP representation} 
We will now rephrase the SVAR process specified by $(\Phi, \mathbf{w})$ as a linear model of stochastic processes on the process graph $G$. For this reformulation, we introduce objects that parameterise the directed links of the process graph, and as we will see in the next subsection, these objects are closely related to direct causal effects in the classical sense \cite{pearl2009causality}.

\begin{definition}[Direct effect-filter] \label{def: direct effect-filter}
    Let $(V, W) \in \mathbf{V} \times \mathbf{V}$ be a pair of process-indices, then the \textit{direct effect-filter} of this pair is the sequence $\dirtmp{V}{W} = (\dirtmp{V}{W}(s))_{s \in \Z}$ whose elements are recursively defined as follows 
 \begin{align}\label{eq: dir_tmp_effect}
        \dirtmp{V}{W}(s) &\coloneqq \begin{cases}
            \sum_{j=1}^s \dirtmp{V}{W}(s -j) \phi_{W, W}(j) + \phi_{V, W}(s) & \text{if $0 \leq s \leq p$} \\
            \sum_{j=1}^p \dirtmp{V}{W}(s-j) \phi_{W,W}(j) & \text{if $s > p$} \\
            0, & \text{ otherwise}
        \end{cases}.
    \end{align}
\end{definition} 
According to this definition, the direct effect-filter $\dirtmp{V}{W}$ is zero at all indices whenever $V \not \to W$. In the following, we denote each direct effect-filter associated with a link $H \to V$ pointing from a latent to an observed process by  $\ltmp{H}{V}$ (instead of $\dirtmp{H}{V}$). We arrange the direct effect-filters into the matrix-valued filters $\Dirtmp$ and $\Ltmp$ such that for every $s \in \Z$
\begin{align*}
    \Dirtmp(s) &\coloneqq (\dirtmp{V}{W}(s))_{V,W \in \mathbf{O}} \in \mathbb{R}^{m \times m} & \Gamma(s) &\coloneqq (\ltmp{H}{V}(s))_{H \in \mathbf{L}, V \in \mathbf{O}} \in \mathbb{R}^{m \times d}. 
\end{align*}
These two filters will constitute the set of linear equations by which the processes $\X_\mathbf{O}$ are defined in terms of themselves, the latent and noise processes at the level of the process graph. 

To formulate the set of equations that define the SCM of processes at the level of the process graph, we recall the convolution operation as a way of composing filters and letting filters operate linearly on stationary stochastic processes. Let $\Lambda = (\Lambda(s))_{s \in \Z} \subset \mathbb{R}^{p \times n}$ and $\Gamma = (\Gamma(r))_{r \in \mathbb{Z}}\subset \mathbb{R}^{n \times m}$ be two filters such that they are entry-wise absolutely summable, i.e., $\sum_{s \in \Z}|\Lambda(s)_{i,j}| < \infty$ resp. $\sum_{r \in \Z}|\Gamma(r)_{i,j}| < \infty$. Then their \textit{convolution}, denoted $\Lambda\ast \Gamma$, and their \textit{tilted convolution}, denoted $\Lambda \tcvl \Gamma$ are defined as follows  
\begin{align*}
	\Lambda \ast \Gamma(u) &\coloneqq \sum_{t \in \Z} \Lambda(t)\Gamma(u - t), \text{ } u \in \Z & \Lambda \tcvl \Gamma (v) &\coloneqq \sum_{t \in \mathbb{Z}} \Lambda(t + v) \Gamma(t), \text{ } v \in \Z.
\end{align*}
The convolution of a filter with a stationary stochastic process is defined analogously.

If $\Lambda$ is a $n\times n$-dimensional filter, then $\Lambda^k$, with $k$ a non-negative integer, denotes the $k$-fold convolution of $\Lambda$. In particular, $\Lambda^0$ is the unit filter $\iota$, which has the identity matrix $I$ at its $0$-th entry and the zero matrix at all other entries. Note that convolution equips the set of entry-wise absolutely summable filters with an associative operation, where $\iota$ acts as the neutral element. 

\begin{definition}[(Projected) internal dynamics]
    Let $O \in \mathbf{O}$, then we define the \textit{internal dynamics} $\X_O^\internal$ to be the linear component of $\X_O$ that is generated by the white-noise process $\eta_{O}$ and the auto-dependencies in $O$. The \textit{projected internal dynamics}, written $\X_O^\linternal$, of process $\X_O$ is the sum of its internal dynamics and the direct contributions of the latent processes, that is, 
\begin{align*}
    \X_O^\internal(t) &\coloneqq \sum_{k =1}^{p} \phi_{O, O}(k)\X_O^\internal(t-k) +  \eta_{O}(t) \\
    \X_O^\linternal(t) &\coloneqq \X_O^\internal(t) + \sum_{L \in \mathbf{L}} \sum_{k=1}^p \phi_{L,O}(k) \X_L(t-k).
\end{align*}
\end{definition}
 In order to ensure that the (projected) internal dynamics are well-defined processes, we will introduce in Theorem \ref{prop: summary SEM} a further requirement on the SVAR parameter $\Phi$. In the following we write $\X_\mathbf{O}^\internal$ and  $\X_\mathbf{O}^\linternal$ to refer to the multivariate processes whose components are the (projected) internal dynamics of $\X_\mathbf{O}$. If $V,W$ are two distinct observed processes, then their internal dynamics are independent of each other, i.e., their cross-covariance sequence, denoted by $\C_{V, W}^\internal$, is the zero-sequence. However, the cross-covariance between the projected internal dynamics of $V$ and $W$, labelled $\C^\linternal_{V,W}$, can be non-zero if $G'$ contains the bidirectional link $V \leftrightarrow W $. These observations follow from the independence assumption on the innovation-terms in a SVAR-model.

We are eventually equipped to express a SVAR-process as a linear model of processes at the level of its process graph. 
\begin{theorem}[Structural equation process representation]\label{prop: summary SEM}
    Let $(\Phi, \mathbf{w})$ be a parameter pair that specifies a stable SVAR process $\X_\mathbf{V}$ consistent with $G$ such that for every $V \in \mathbf{V}$ the following stability condition is satisfied
    \begin{align} \label{condition: stability II}
        \sum_{k=1}^p |\phi_{V,V}(k)| < 1, 
    \end{align} and the power-series of filters $\Dirtmp^\infty \coloneqq\sum_{k = 0}^\infty \Dirtmp^k$ exists and is an entry-wise absolutely summable filter. Then the observed processes satisfy the linear relation
    \begin{equation} \label{eq: processes equation}
        \begin{split}
             \X_\mathbf{O} &= \Dirtmp^\top \ast \X_\mathbf{O} + \Ltmp^\top \ast \mathbf{L} + \X_\mathbf{O}^\internal \\
            &=(\Dirtmp^\infty)^\top \ast \X_\mathbf{O}^\linternal, 
        \end{split}
    \end{equation}
    so that the auto-covariance sequence of the observed process is 
    \begin{equation}
        \begin{split} \label{eq: corss covariance process graph}
            \C_{\mathbf{O}} &= (\Dirtmp^\infty)^\top \ast (\C_\mathbf{O}^\linternal) \tcvl \Dirtmp^\infty, \text{ where}\\
            \C_\mathbf{O}^\linternal &= \C_\mathbf{O}^\internal + \Gamma^\top \ast \C_\mathbf{L} \tcvl \Gamma.
        \end{split}
    \end{equation} 
\end{theorem}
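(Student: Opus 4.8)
The plan is to reduce the claim to two successive filter inversions—first eliminating the auto-dependency of each observed process, then eliminating the cross-dependencies—followed by a routine second-moment computation. First I would rewrite the defining SVAR recursion for a single observed index $O \in \mathbf{O}$ by isolating its own lagged terms,
\begin{displaymath}
\X_O(t) - \sum_{k=1}^p \phi_{O,O}(k)\,\X_O(t-k) = \sum_{\substack{V \in \mathbf{O}\\ V \neq O}}\sum_{k=0}^p \phi_{V,O}(k)\,\X_V(t-k) + \sum_{L \in \mathbf{L}}\sum_{k=0}^p \phi_{L,O}(k)\,\X_L(t-k) + \eta_O(t).
\end{displaymath}
Writing $\alpha_O$ for the filter with $\alpha_O(0)=1$, $\alpha_O(k)=-\phi_{O,O}(k)$ for $1\le k\le p$ and $0$ otherwise, the left-hand side is $\alpha_O \ast \X_O$. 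The stability condition (\ref{condition: stability II}) gives $\sum_{k\ge 1}|\phi_{O,O}(k)|<1$, so the Neumann series $\alpha_O^{-1}=\sum_{n\ge0}(\iota-\alpha_O)^n$ converges to an entry-wise absolutely summable causal filter; this is exactly the step that makes the internal dynamics well defined. Convolving through by $\alpha_O^{-1}$ and setting $\X_O^\internal \coloneqq \alpha_O^{-1}\ast\eta_O$ yields
\begin{displaymath}
\X_O = \sum_{V \in \mathbf{O}} (\alpha_O^{-1}\ast\phi_{V,O})\ast \X_V + \sum_{L \in \mathbf{L}}(\alpha_O^{-1}\ast\phi_{L,O})\ast \X_L + \X_O^\internal,
\end{displaymath}
where $\phi_{V,O}$ now abbreviates the length-$(p+1)$ coefficient filter. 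The key identification is that $\alpha_O^{-1}\ast\phi_{V,O}$ solves precisely the recursion (\ref{eq: dir_tmp_effect}) defining $\dirtmp{V}{O}$: multiplying that recursion by $\alpha_O$ collapses it to $\alpha_O\ast\dirtmp{V}{O}=\phi_{V,O}$, which I would verify by a short induction on $s$ (equivalently, by comparing $z$-transforms, where $\dirtmp{V}{O}(z)=\phi_{V,O}(z)/(1-\phi_{O,O}(z))$). Since $\dirtmp{O}{O}$ vanishes (there is no self-edge) and $\ltmp{L}{O}=\dirtmp{L}{O}$, stacking these identities over $O\in\mathbf{O}$ gives the first line of (\ref{eq: processes equation}).

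Next I would perform the cross-process inversion. The first line reads $\X_\mathbf{O}=\Dirtmp^\top\ast\X_\mathbf{O}+\Ltmp^\top\ast\mathbf{L}+\X_\mathbf{O}^\internal$, i.e. $(\iota-\Dirtmp^\top)\ast\X_\mathbf{O}=\Ltmp^\top\ast\mathbf{L}+\X_\mathbf{O}^\internal$, whose right-hand side is the projected internal dynamics $\X_\mathbf{O}^\linternal=\X_\mathbf{O}^\internal+\Ltmp^\top\ast\mathbf{L}$. By hypothesis $\Dirtmp^\infty=\sum_{k\ge0}\Dirtmp^k$ exists and is absolutely summable, and it inverts $\iota-\Dirtmp$ because $(\iota-\Dirtmp)\ast\Dirtmp^\infty$ telescopes to $\iota$; transposing gives $(\iota-\Dirtmp^\top)^{-1}=(\Dirtmp^\infty)^\top$. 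Convolving through yields the second line $\X_\mathbf{O}=(\Dirtmp^\infty)^\top\ast\X_\mathbf{O}^\linternal$. All rearrangements of the doubly-infinite sums here are licensed by absolute summability of the filters together with the uniformly bounded second moments of the stationary processes involved, so that the series converge in $L^2$.

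Finally I would compute the auto-covariance sequence. The engine is the elementary identity that for an absolutely summable filter $\Psi$ and a centred stationary process $\X$ with summable ACS $\C_\X$, the filtered process $\Psi\ast\X$ has ACS $\Psi\ast\C_\X\tcvl\Psi$; this follows by expanding $\Ep[(\Psi\ast\X)(t)\,(\Psi\ast\X)(t-s)^\top]$, using $\Ep[\X(t-u)\X(t-s-v)^\top]=\C_\X(s+v-u)$, and recognising the resulting double sum as a convolution followed by a tilted convolution—the reflection in the second index being precisely what $\tcvl$ encodes. Applying this with $\Psi=(\Dirtmp^\infty)^\top$ and $\X=\X_\mathbf{O}^\linternal$ gives $\C_\mathbf{O}=(\Dirtmp^\infty)^\top\ast\C_\mathbf{O}^\linternal\tcvl\Dirtmp^\infty$. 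For the second formula I would use $\X_\mathbf{O}^\linternal=\X_\mathbf{O}^\internal+\Ltmp^\top\ast\mathbf{L}$: since the innovations $\eta$ are independent of the latent processes $\mathbf{L}$, the internal dynamics and the filtered latent term are uncorrelated, the cross terms drop, and the ACS is additive; applying the same filtering identity to $\Ltmp^\top\ast\mathbf{L}$ produces $\Ltmp^\top\ast\C_\mathbf{L}\tcvl\Ltmp$, whence $\C_\mathbf{O}^\linternal=\C_\mathbf{O}^\internal+\Ltmp^\top\ast\C_\mathbf{L}\tcvl\Ltmp$.

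The conceptually delicate point is the matching of the recursively defined effect-filter $\dirtmp{V}{O}$ with the closed-form inverse $\alpha_O^{-1}\ast\phi_{V,O}$, since this certifies that the abstract SEP parameters coincide with the concrete SVAR-derived quantities; everything downstream is bilinear algebra. The main genuine obstacle, however, is analytic rather than algebraic: one must justify the existence of $\alpha_O^{-1}$ and the freedom to interchange the infinite convolutions with the expectation. I expect both to go through cleanly—the former from the contraction bound (\ref{condition: stability II}) via a Neumann series, the latter from absolute summability of all filters combined with stationarity (finite, $t$-independent second moments) guaranteeing $L^2$-convergence and hence Fubini. The hypothesis that $\Dirtmp^\infty$ exists and is absolutely summable removes the only step where convergence could genuinely fail.
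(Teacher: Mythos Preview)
Your proposal is correct and conceptually aligned with the paper's proof, but the execution differs in a way worth noting. Both arguments hinge on inverting the scalar auto-dependency filter $\alpha_O$ and identifying $\alpha_O^{-1}\ast\phi_{V,O}$ with the recursively defined $\dirtmp{V}{O}$; the paper records this same fact as $\dirtmp{U}{V}=\phi_{U,V}\ast f_V$ where $f_V=\alpha_V^{-1}$ is the auto-dependency impulse response. Where you invoke the Neumann series $\alpha_O^{-1}=\sum_{n\ge0}(\iota-\alpha_O)^n$ in one step and justify the filter algebra by $L^2$ convergence under absolute summability, the paper instead runs the recursion by hand: it expands $\X_V(t)$ step by step, obtains at stage $n$ an approximation $\X_\mathbf{V}=(\Dirtmp^{(<n+1)}+f^{(n)})^\top\ast\X_\mathbf{V}+f^{(<n)}\ast\eta$ with partial-sum filters, devotes most of its effort to proving those partial sums converge in $\ell^1$ (by identifying $f_V$ with the formal power series $\sum_{j\ge1}(a_1z+\cdots+a_pz^p)^j$ and bounding via $\sum_k|a_k|<1$), and then appeals to a separate lemma that $\ell^1$ filter convergence against a stable SVAR implies convergence in distribution of the filtered Gaussian processes. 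Your route is shorter and more algebraic; the paper's route makes the truncation-and-limit structure explicit and works in distribution rather than $L^2$. The auto-covariance step is handled identically: both apply the filtering identity $\C_{\Lambda\ast\X,\Gamma\ast\Y}=\Lambda\ast\C_{\X,\Y}\tcvl\Gamma^\top$ (your ``engine''; the paper's Lemma \ref{lemma: filtering covariance}) to $(\Dirtmp^\infty)^\top\ast\X_\mathbf{O}^\linternal$ and then to $\Ltmp^\top\ast\X_\mathbf{L}$.
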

We defer the proof of this theorem to the Appendix \ref{section: proof main theorem}. Suppose $\X_\mathbf{V}= (\X_{V})_{V \in\mathbf{O} \cup \mathbf{L}}$ is a SVAR process specified by a parameter pair $(\Phi, \mathbf{w}) \in \SVAR(G)$, then we refer to the tuple $(\Dirtmp, \C_\mathbf{O}^\linternal)$ as the \textit{time domain SEP representation} of the observable process $\X_\mathbf{O}$. 

\subsection{Causal effects at the level of the process graph}\label{subsection: causal effects}
In this section, we define (controlled) causal effects between entire processes at the level of the process graph $G$ in terms of directed paths on $G$ and the direct effect-filters from Definition \ref{def: direct effect-filter}. On top of that, we express these process level effects as sequences of causal effects at the level of the time series graph, i.e., sums of path-coefficients on the time series graph $\mathcal{G}$. 

Our notion of direct effect-filters in combination with convolution allows us to generalise the notion of path-coefficients from SEMs to SEPs. 
Let $\pi = W_1 \to \cdots \to W_n$ be a directed path on the process graph $G$. The \textit{path-filter} $\Lambda^{(\pi)}$ of $\pi$ is the convolution of the direct effect-filters associated with the links forming the path $\pi$, i.e.,
\begin{align*}
    \Lambda^{(\pi)} \coloneqq \dirtmp{W_1}{W_2}\ast \cdots \ast \dirtmp{W_{n-1}}{W_n}.
\end{align*}

Let $X, Y \in \mathbf{O}$ and $\mathbf{Z} \subset \mathbf{O}$, then a directed path $X \to W_1 \to \cdots \to W_n \to Y$ is said to avoid $\mathbf{Z}$ if none of the $W_i$ lies in $\mathbf{Z}$. We call the possible infinite set of directed paths from $X$ to $Y$ that avoid $\mathbf{Z}$, the $\mathbf{Z}$-\textit{avoiding-paths} from $X$ to $Y$ and denote it by $\mathrm{P}_\mathbf{Z}(X, Y)$. The $\emptyset$-avoiding paths from $X$ to $Y$ are simply all directed paths from $X$ to $Y$, and we write $\mathrm{P}(X,Y)$.

\begin{definition}[Controlled causal effect filter]\label{def: ccf}
    Let $X,Y \in \mathbf{O}$ be two processes and let $\mathbf{Z} \subset \mathbf{O} \setminus \{X,Y\}$ be a collection of processes. The \textit{controlled causal effect filter (CCF)} $\cetemp{X}{Y}{\mathbf{Z}}$ for the causal effect of $X$ on $Y$, controlled for $\mathbf{Z}$, is defined to be the sum of path-filters that are associated with the $\mathbf{Z} \cup \{X\}$-avoiding-paths, i.e.,
 \begin{align*}
    \cetemp{X}{Y}{\mathbf{Z}} &\coloneqq \sum_{\pi \in \mathrm{P}_{\mathbf{Z} \cup \{X\}}(X, Y)}  \Lambda^{(\pi)}.
 \end{align*}
 If $\mathbf{Z}$ is the empty set, then we abbreviate $\cetemp{X}{Y}{\emptyset}$ by $\ucetemp{X}{Y}$ and refer to it as the \textit{causal effect filter (CF)}  of the causal effect of the process $X$ on $ Y$. 
\end{definition}
Note that for any pair $V,W$ of processes the equality $\cetemp{V}{W}{\mathbf{O} \setminus \{W\}} = \dirtmp{V}{W}$, where $\dirtmp{V}{W}$ is the direct effect-filter (\ref{eq: dir_tmp_effect}). Using the above notion of path-filters, Wright's path-rule (\ref{eq: path-rule}) generalises as follows.
\begin{proposition}[Generalised path-rule] \label{prop: generalised path-rule}
    Let $\Dirtmp^\infty$ be as defined in Theorem \ref{prop: summary SEM} and $V,W \in \mathbf{O} $, then 
    \begin{align}\label{eq: generalised path-rule}
    \Dirtmp^\infty_{V,W} &= \sum_{\pi \in \mathrm{P}(V,W)} \Dirtmp^{(\pi)}.
\end{align}
\end{proposition}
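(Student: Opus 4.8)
The plan is to mimic the classical proof of Wright's path-rule, replacing scalar multiplication by convolution and the identity matrix by the unit filter $\iota$. The essential structural fact is that the entry-wise absolutely summable $m \times m$ matrix-valued filters form an associative algebra under the convolution $\ast$, with $\iota$ as neutral element, behaving exactly like the ring of $m \times m$ matrices over the commutative ring of scalar filters. Concretely, for any two such matrix-valued filters $\Dirtmp$ and $\Xi$ one has the ``matrix-product'' rule
\begin{align*}
(\Dirtmp \ast \Xi)_{V,W} = \sum_{U \in \mathbf{O}} \dirtmp{V}{U} \ast \Xi_{U,W},
\end{align*}
which follows by unwinding the definition of convolution together with ordinary matrix multiplication at each time index. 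First I would record this identity and note that, since the convolution of absolutely summable filters is again absolutely summable, every object appearing below is well defined.

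Next I would prove by induction on $k \geq 1$ the walk-expansion of the convolution powers,
\begin{align*}
(\Dirtmp^k)_{V,W} = \sum_{U_0 = V,\, U_1, \dots, U_{k-1},\, U_k = W} \dirtmp{U_0}{U_1} \ast \cdots \ast \dirtmp{U_{k-1}}{U_k},
\end{align*}
where the sum ranges over all length-$k$ walks in $\mathbf{O}$. The base case $k=1$ is immediate, and the inductive step is a direct application of the matrix-product rule above. The decisive simplification is that, by Definition \ref{def: direct effect-filter}, the direct effect-filter $\dirtmp{V}{W}$ is the zero filter whenever $V \not\to W$; hence every walk that traverses a non-edge contributes the zero filter and may be discarded. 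What remains are exactly the directed paths of length $k$ from $V$ to $W$, so that $(\Dirtmp^k)_{V,W} = \sum_{\pi} \Dirtmp^{(\pi)}$, the sum taken over those $\pi \in \mathrm{P}(V,W)$ having $k$ edges, each summand being precisely the path-filter $\Dirtmp^{(\pi)} = \dirtmp{U_0}{U_1} \ast \cdots \ast \dirtmp{U_{k-1}}{U_k}$.

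Finally I would sum over $k$. The $k=0$ term is $(\Dirtmp^0)_{V,W} = \iota_{V,W}$, which is the unit filter if $V = W$ and the zero filter otherwise; this matches the convention that the empty path in $\mathrm{P}(V,V)$ carries path-filter $\iota$. Summing the walk-expansion over all $k \geq 0$ and collecting terms by the path they index yields
\begin{align*}
\Dirtmp^\infty_{V,W} = \sum_{k \geq 0} (\Dirtmp^k)_{V,W} = \sum_{\pi \in \mathrm{P}(V,W)} \Dirtmp^{(\pi)},
\end{align*}
which is the assertion. The step requiring care, and the main obstacle, is the rearrangement of this doubly-indexed sum (over $k$, and over the length-$k$ paths) into a single sum over $\mathrm{P}(V,W)$, since $\mathrm{P}(V,W)$ is in general infinite due to cycles in $G$. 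Here the hypotheses of Theorem \ref{prop: summary SEM} do the work: the assumed existence of $\Dirtmp^\infty$ as an entry-wise absolutely summable filter guarantees absolute convergence of $\sum_{k} (\Dirtmp^k)_{V,W}$, which in turn licenses the unconditional reindexing by paths and the conclusion that $\sum_{\pi \in \mathrm{P}(V,W)} \Dirtmp^{(\pi)}$ converges to the same limit.
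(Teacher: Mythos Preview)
Your argument is correct and is precisely the approach the paper has in mind; the paper's own proof is the single sentence ``Proposition \ref{prop: generalised path-rule} follows from the definition of $\Dirtmp^\infty$ as a power-series of filters and the definition of convolution,'' and you have simply spelled this out. One very minor remark: your final convergence step is slightly over-argued, since the passage from $\sum_{k \geq 0}\sum_{|\pi|=k}\Dirtmp^{(\pi)}$ to $\sum_{\pi \in \mathrm{P}(V,W)}\Dirtmp^{(\pi)}$ is not a genuine rearrangement but just a reindexing of a countable sum along the finite blocks $\{\pi : |\pi| = k\}$, so the existence of $\Dirtmp^\infty$ is already all that is needed.
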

Proposition \ref{prop: generalised path-rule} follows from the definition of $\Dirtmp^\infty$ as a power-series of filters and the definition of convolution.

We now express a given CCF $\cetemp{X}{Y}{\mathbf{Z}}$ in terms of path-coefficients at the level of the time series graph $\mathcal{G}$. Let $\rho = W_1(t_1) \to \cdots \to W_m(t_m)$ be a directed path on $\mathcal{G}$, then its \textit{path-coefficient}, written $\Phi^{(\rho)}$, is given by the product of its link-coefficients
\begin{align*}
    \Phi^{(\rho)} &\coloneqq \prod_{i=1}^{m-1} \phi_{W_i, W_{i+1}}(t_{i+1}-t_i).
\end{align*}
\begin{proposition} \label{prop: causal effects process graph}
    Let $X,Y, \mathbf{Z}$ be as in Definition \ref{def: ccf}, and $\mathcal{G}'$ the time series subgraph of $\mathcal{G}$ over the nodes $\mathbf{V} \times  \Z$ such that its set of directed edges is 
    \begin{displaymath}
        \mathcal{D}' \coloneqq\{ V(t-k) \to W(t) \in \mathcal{D}: W \notin \mathbf{Z} \cup \{X\} \} \subset \mathcal{D}.
    \end{displaymath}
    This graph $\mathcal{G}' = (\mathbf{V}, \mathcal{D}')$ is the graph one obtains by deleting all edges in $\mathcal{G}$ which point to any of the nodes in $(X(s), \mathbf{Z}(s))_{s \in \Z}$.
    Further, we denote by $\mathcal{P}'(X(t-s), Y(t))$ the set of all directed paths on $\mathcal{G}'$ that start at $X(t-s)$ and end at $Y(t)$. Then, the $s$-th element of the CCF $\cetemp{X}{Y}{\mathbf{Z}}$ is given by the sum of the path-coefficients of the directed paths from $X(t-s)$ to $Y(s)$ on the subgraph $\mathcal{G}'$, i.e.,   
    \begin{align*}
        \cetemp{X}{Y}{\mathbf{Z}}(s) &=\sum_{\rho  \in \mathcal{P}'(X(t-s), Y(t))} \Phi^{(\rho)}. 
    \end{align*}
\end{proposition}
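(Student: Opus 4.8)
The plan is to build a dictionary between the algebraic operations defining $\cetemp{X}{Y}{\mathbf{Z}}$ (convolutions of direct effect-filters, summed over directed paths on the process graph) and the combinatorics of directed paths on the time series subgraph $\mathcal{G}'$, and then to bootstrap from a single edge to the full statement. First I would establish the base case: for a single link $V \to W$ of the process graph, $\dirtmp{V}{W}(s)$ equals $\sum_\rho \Phi^{(\rho)}$ summed over all directed paths $\rho$ on $\mathcal{G}$ that run from $V(t-s)$ to $W(t)$, use the transition edge $V \to W$ exactly once at their start, and otherwise consist only of auto-dependency edges $W(\tau - j) \to W(\tau)$ of $W$. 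To verify this I would show the right-hand side obeys the same recursion as Definition~\ref{def: direct effect-filter}: decomposing such a path by its \emph{last} edge into $W(t)$, either the path is the single transition edge $V(t-s) \to W(t)$, contributing $\phi_{V,W}(s)$ (nonzero only for $0 \le s \le p$), or its last edge is an auto-dependency $W(t-j) \to W(t)$ with $1 \le j \le p$, contributing $\phi_{W,W}(j)$ times the analogous sum for lag $s-j$. This reproduces~(\ref{eq: dir_tmp_effect}) term by term, and both sides vanish for $s<0$.

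Next I would lift this to path-filters. For a directed path $\pi = W_1 \to \cdots \to W_n$ on $G$, I claim $\Lambda^{(\pi)}(s)$ equals $\sum_\rho \Phi^{(\rho)}$ over all directed paths $\rho$ on $\mathcal{G}$ from $W_1(t-s)$ to $W_n(t)$ that are \emph{adapted} to $\pi$, i.e., that traverse the transitions $W_1 \to W_2, \dots, W_{n-1} \to W_n$ each exactly once and in this order, interleaved with auto-dependency stretches of $W_2, \dots, W_n$, and with no auto-dependency stretch of $W_1$ preceding the first transition. I would prove this by induction on $n$ using the definition of convolution: writing $\Lambda^{(\pi)}(s) = \sum_{r} \dirtmp{W_1}{W_2}(r)\, \Lambda^{(\pi')}(s-r)$ for $\pi' = W_2 \to \cdots \to W_n$, the index $r$ records the lag of the $W_2$-vertex that is the source of the transition $W_2 \to W_3$; since each transition occurs exactly once, this split is unique, the path-coefficient $\Phi^{(\rho)}$ factorises over the two segments, and summing over all $r$ enumerates every adapted path exactly once. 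The base case $n=2$ is precisely the statement of the previous paragraph.

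Finally I would sum over $\pi \in \mathrm{P}_{\mathbf{Z} \cup \{X\}}(X,Y)$ and identify the result with the sum over $\mathcal{P}'(X(t-s), Y(t))$. The bridge is the projection sending a time series path to the process-graph path obtained by collapsing each maximal auto-dependency stretch to a single vertex. The decisive observation is that deleting every edge whose head lies in $X$ or $\mathbf{Z}$ (the definition of $\mathcal{G}'$) has two effects matching the adapted structure: every path from $X(t-s)$ to $Y(t)$ on $\mathcal{G}'$ (i) contains no auto-dependency edge of $X$ and can never revisit a node of $X$, so it begins with a transition out of $X$, matching the absence of a $W_1 = X$ stretch; and (ii) never visits a node of $\mathbf{Z}$. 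Hence its projection lies in $\mathrm{P}_{\mathbf{Z}\cup\{X\}}(X,Y)$, and conversely, for each such $\pi$ the fibre of the projection is exactly the set of paths adapted to $\pi$, all of which are edges of $\mathcal{G}'$ because no transition or auto-dependency involved has its head in $\mathbf{Z}\cup\{X\}$. This exhibits $\mathcal{P}'(X(t-s),Y(t))$ as the disjoint union over $\pi$ of the adapted paths, so that $\sum_{\rho \in \mathcal{P}'(X(t-s),Y(t))} \Phi^{(\rho)} = \sum_{\pi} \Lambda^{(\pi)}(s) = \cetemp{X}{Y}{\mathbf{Z}}(s)$ by the two preceding steps.

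I expect the main obstacle to be the alignment carried out in this last step: one must check simultaneously that the edge-deletion forbids exactly the initial self-loop at the source $X$ (and nothing more at $Y$ or at observed processes outside $\mathbf{Z}\cup\{X\}$, which may legitimately be revisited, since process-graph paths allow repeated vertices) and that the collapsing projection has the adapted paths as its fibres. The recursion check in the base case and the convolution bookkeeping in the induction are routine once this path dictionary is in place.
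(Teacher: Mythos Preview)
Your proposal is correct and follows essentially the same strategy as the paper's proof: first establish the single-edge identity $\dirtmp{V}{W}(s)=\sum_{\rho}\Phi^{(\rho)}$ via the last-edge recursion matching Definition~\ref{def: direct effect-filter}, then lift to arbitrary process-graph paths by induction using the convolution structure (the paper peels off the last edge of $\pi$ rather than the first, but this is immaterial), and finally identify the disjoint union of the ``adapted'' fibres with $\mathcal{P}'(X(t-s),Y(t))$ via the collapsing projection, which the paper formalises as a map $F$. The only cosmetic difference is that the paper packages your ``adapted to $\pi$'' paths as the preimage $F^{-1}(\pi,s)$ of an explicitly defined projection, which makes the disjointness of the fibres (even when $\pi$ revisits processes) automatic from the uniqueness of the maximal auto-dependency decomposition; your formulation achieves the same thing but you should make sure your phrase ``traverse the transitions \dots\ each exactly once and in this order'' is read as specifying the full ordered decomposition, so that a time-series path is adapted to a unique $\pi$.
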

We provide the proof in the Appendix \ref{section: proof generalised path rule}. 

\begin{remark}
    Proposition \ref{prop: causal effects process graph} characterises the CCF associated with the effect of process $X$ on $Y$, controlled on  $\mathbf{Z}$, as a sequence of multivariate controlled causal effects at the level of the time series graph $\mathcal{G}$. Namely, the $s$-th element of the filter $\cetemp{X}{Y}{\mathbf{Z}}$ measures the expected change in the mean of $\X_Y(t)$ under the multivariate intervention which sets the state of $X$ at time $t-s$ to the value one, the states of the processes $\mathbf{Z}$ at $t-s$ to zero, and the states of $X$ and all processes $\mathbf{Z}$ to zero at every time $r \in \{t-s +1, \dots t \}$. 
    So we can express the entries of the CCF using the well known $\Do$-notation \cite{pearl2009causality}
    \begin{align*}
        \cetemp{X}{Y}{\mathbf{Z}}(s) &= \Ep[\X_Y(t) | \Do((\X_X(r), \X_\mathbf{Z}(r))) \coloneqq (x(r), \mathbf{z}(r)), r \in \{t-s, \dots, t \}],
    \end{align*}
    where $z(r) = 0$ for all $t-s \leq r \leq t$ and $x(t-s) = 1$ and $x(r) = 0$ for $t-s < r \leq 0$. This characterisation is linked to time-windowed causal effects, which we have introduced in preliminary work \cite{reiter2022causal}. 
    Thus, the CCF associated to the causal effect of $X$ on $Y$, controlling $\mathbf{Z}$, can be understood as a collection of controlled causal effects in the classical sense \cite{pearl2009causality, Pearl2001DirectAI, witte2020efficient}. 
    
    Alternatively, we can view the CCF as the result of a process-level intervention. To do so, for every bounded sequence $x = (x(t))_{t \in \Z}$, we define a new multivariate process $\mathbf{O}_{x| \mathbf{Z}} = (V_{\Do(X)=x, \Do(\mathbf{Z}) = \mathbf{0}})_{V \in \mathbf{O} \setminus (\{X\} \cup \mathbf{Z})}$ such that among its component processes the following equations hold  
    \begin{align*}
        V_{x | \mathbf{Z}} &= \dirtmp{X}{V} \ast x + \sum_{U \in \Pa(V) \setminus (\{X \} \cup \mathbf{Z})} \dirtmp{U}{V} \ast U_{x|\mathbf{Z}} + \Gamma \ast \mathbf{L}.
    \end{align*}
     The process $\mathbf{O}_{x | \mathbf{Z}}$ is the process obtained after intervention in the component process $X$, that is, after setting $X(t)$ to the value $x(t)$ and $\mathbf{Z}(t)$ to zero for all $t$. By comparing the mean sequences of two interventional processes $\mathbf{V}_{x| \mathbf{Z}}$ and $\mathbf{V}_{x' | \mathbf{Z}}$  we gain information on the causal effect of $X$ on $Y$, controlled for $\mathbf{Z}$, in terms of the corresponding CCF, namely,  
    \begin{align*}
        \Ep[Y_{x|\mathbf{Z}}] - \Ep[Y_{x' | \mathbf{Z}}] &= \cetemp{X}{Y}{\mathbf{Z}} \ast (x - x'). 
    \end{align*}
\end{remark}

Proposition \ref{prop: causal effects process graph} provides a graphical interpretation of the direct effect-filters. Namely, for a pair of processes $V,W$ the $s$-th element of the direct effect-filter $\dirtmp{V}{W}$ quantifies how the value of the random variable $\X_V(t-s)$ is contributing to the value of $\X_W(t)$ through all the directed paths on $\mathcal{G}$ on which the first link is a time-lagged or contemporaneous effect of $V$ on $W$ and all subsequent links are auto-dependency links of $W$. A directed edge $V \to W$ on the process graph $G$ implies that part of the process $W$ is determined by the process $V$ through its direct effect-filter $\dirtmp{V}{W}$. This part is the stochastic process $\dirtmp{V}{W} \ast \X_V$. 

Similarly, there is an explicit interpretation of CCFs. Let us therefore pick $X,Y$ and $\mathbf{Z}$ as in Proposition \ref{prop: causal effects process graph}. Convolving the CCF $\cetemp{V}{W}{\mathbf{Z}}$ with the process $X$ yields the stochastic process $\cetemp{V}{W}{\mathbf{Z}}\ast \X_X$, which is the component of the process $\X_Y$ that is determined by $\X_X$ along all the causal path-ways that start at $X$ and end at $Y$ without ever passing through any of the processes in $\mathbf{Z}$ or revisiting $X$. 
\begin{example}[Graphical illustration of CCFs]
\begin{figure}
    \centering
    \resizebox{0.9\textwidth}{!}{
    \begin{tikzpicture}[
       ts_node/.style={circle, draw=gray, fill=gray!30},
       ts_node_cause/.style={circle, draw=purple!60, fill=purple!5, very thick},
       ts_node_cause_control/.style={circle, draw=purple!60, fill=purple!30, ultra thick},
       ts_node_condition/.style={circle, draw=violet!60, fill=violet!5,  very thick},
       ts_node_target/.style={circle, draw=olive!60, fill=olive!30, ultra thick},
       ts_header/.style={rectangle, minimum size=1cm},
       snode/.style={circle, draw=gray!60, fill=gray!5, very thick},
       snode_latent/.style={circle, draw=gray!60, fill=gray!5, very thick, dashed},
       snode_cause/.style={circle, draw=purple!60, fill=purple!5, ultra thick},
       snode_condition/.style={circle, draw=violet!60, fill=violet!5, very thick},
       snode_target/.style={circle, draw=olive!60, fill=olive!5, ultra thick},
    ]


    \node[ts_header] (t_0) at (0,0) {$t$};
    \node[ts_node_cause] (Xt_0) [below=0.2cm of t_0] {};
    \node[ts_node] (Wt_0) [below=of Xt_0] {};
    \node[ts_node] (Zt_0) [below=of Wt_0] {};
    \node[ts_node_target] (Yt_0) [below=of Zt_0] {};

    \node[ts_header] (t_1) [left=of t_0] {$t-1$};
    \node[ts_node_cause] (Xt_1) [below=0.2cm of t_1] {};
    \node[ts_node] (Wt_1) [below=of Xt_1] {};
    \node[ts_node] (Zt_1) [below=of Wt_1] {};
    \node[ts_node] (Yt_1) [below=of Zt_1] {};

    \node[ts_header] (t_2) [left=of t_1] {$t-2$};
    \node[ts_node_cause] (Xt_2) [below=0.2cm of t_2] {};
    \node[ts_node] (Wt_2) [below=of Xt_2] {};
    \node[ts_node] (Zt_2) [below=of Wt_2] {};
    \node[ts_node] (Yt_2) [below=of Zt_2] {};

    \node[ts_header] (t_3) [left=of t_2] {$t-3$};
    \node[ts_node_cause] (Xt_3) [below=0.2cm of t_3] {};
    \node[ts_node] (Wt_3) [below=of Xt_3] {};
    \node[ts_node] (Zt_3) [below=of Wt_3] {};
    \node[ts_node] (Yt_3) [below=of Zt_3] {};

    \node[ts_header] (t_4) [left=of t_3] {$t-4$};
    \node[ts_node_cause_control] (Xt_4) [below=0.2cm of t_4] {};
    \node[ts_node] (Wt_4) [below=of Xt_4] {};
    \node[ts_node] (Zt_4) [below=of Wt_4] {};
    \node[ts_node] (Yt_4) [below=of Zt_4] {};

    \node[snode_cause] (X) [left=of Xt_4] {$X$};
    \node[snode] (W) [left=of Wt_4] {$W$};
    \node[snode_latent] (Z) [left=3.cm of Zt_4] {$L$}; 
    \node[snode_target] (Y) [left=of Yt_4] {$Y$};
    \node[ts_header] (name_a) [left= of Z] {\textbf{a.)} };


    \draw[->, dotted] (Xt_1) -- (Xt_0);
    \draw[->, teal, thick] (Yt_1) -- (Yt_0);
    \draw[->, lightgray] (Wt_1) -- (Wt_0);
    \draw[->, dashed, draw=red!60] (Zt_1) -- (Zt_0);

    \draw[->, dotted] (Xt_2) -- (Xt_1);
    \draw[->, teal, thick] (Yt_2) -- (Yt_1);
    \draw[->, teal, thick] (Wt_2) -- (Wt_1);
    \draw[->, dashed, draw=red!60] (Zt_2) -- (Zt_1);

    \draw[->, dotted] (Xt_3) -- (Xt_2);
    \draw[->, teal, thick] (Yt_3) -- (Yt_2);
    \draw[->, teal, thick] (Wt_3) -- (Wt_2);
    \draw[->, dashed, draw=red!60] (Zt_3) -- (Zt_2);

    \draw[->, dotted] (Xt_4) -- (Xt_3);
    \draw[->, lightgray] (Yt_4) -- (Yt_3);
    \draw[->, lightgray] (Wt_4) -- (Wt_3);
    \draw[->, dashed, draw=red!60] (Zt_4) -- (Zt_3);

    \draw[->, lightgray] (Xt_1) -- (Wt_0);
    \draw[->, lightgray] (Xt_1) -- (Yt_0);
    \draw[->, teal, thick] (Wt_1) -- (Yt_0);
    \draw[->, dotted, draw=red!60] (Zt_1) -- (Xt_0);
    \draw[->, lightgray] (Yt_1) -- (Wt_0);

    \draw[->, lightgray] (Xt_2) -- (Wt_1);
    \draw[->, lightgray] (Xt_2) -- (Yt_1);
    \draw[->, teal, thick] (Wt_2) -- (Yt_1);
    \draw[->, dotted, draw=red!69] (Zt_2) -- (Xt_1);
    \draw[->, teal, thick] (Yt_2) -- (Wt_1);

    \draw[->, lightgray] (Xt_3) -- (Wt_2);
    \draw[->, lightgray] (Xt_3) -- (Yt_2);
    \draw[->, teal, thick] (Wt_3) -- (Yt_2);
    \draw[->, dotted, draw=red!60] (Zt_3) -- (Xt_2);
    \draw[->, teal, thick] (Yt_3) -- (Wt_2);

    \draw[->, teal, thick] (Xt_4) -- (Wt_3);
    \draw[->, teal, thick] (Xt_4) -- (Yt_3);
    \draw[->, lightgray] (Wt_4) -- (Yt_3);
    \draw[->, dotted, draw=red!60] (Zt_4) -- (Xt_3);
    \draw[->, lightgray] (Yt_4) -- (Wt_3);

    \draw[->, dashed, draw=red!60] (Zt_2) -- (Yt_0);

    \draw[->, dashed, draw=red!60] (Zt_3) -- (Yt_1);
    
    \draw[->, dashed, draw=red!60] (Zt_4) -- (Yt_2);

    \draw[->, teal, very thick] (X) to [out=315, in=45](Y);
    \draw[->, teal, very thick] (X) -- (W);
    \draw[->, teal, very thick] (W) to [out=255, in=105] (Y);
    \draw[->, draw=red!60, dotted, thick] (Z) -- (X);
    \draw[->, draw=red!60, dashed, thick] (Z) -- (Y);
    \draw[->, teal, very thick] (Y) to [out=75, in=285] (W);

    
    \node[ts_header] (t_0_b) [below=of Yt_0] {$t$};
    \node[ts_node_cause] (Xt_0_b) [below=0.2cm of t_0_b] {};
    \node[ts_node_condition] (Wt_0_b) [below=of Xt_0_b] {};
    \node[ts_node] (Zt_0_b) [below=of Wt_0_b] {};
    \node[ts_node_target] (Yt_0_b) [below=of Zt_0_b] {};

    \node[ts_header] (t_1_b) [left=of t_0_b] {$t-1$};
    \node[ts_node_cause] (Xt_1_b) [below=0.2cm of t_1_b] {};
    \node[ts_node_condition] (Wt_1_b) [below=of Xt_1_b] {};
    \node[ts_node] (Zt_1_b) [below=of Wt_1_b] {};
    \node[ts_node] (Yt_1_b) [below=of Zt_1_b] {};

    \node[ts_header] (t_2_b) [left=of t_1_b] {$t-2$};
    \node[ts_node_cause] (Xt_2_b) [below=0.2cm of t_2_b] {};
    \node[ts_node_condition] (Wt_2_b) [below=of Xt_2_b] {};
    \node[ts_node] (Zt_2_b) [below=of Wt_2_b] {};
    \node[ts_node] (Yt_2_b) [below=of Zt_2_b] {};

    \node[ts_header] (t_3_b) [left=of t_2_b] {$t-3$};
    \node[ts_node_cause] (Xt_3_b) [below=0.2cm of t_3_b] {};
    \node[ts_node_condition] (Wt_3_b) [below=of Xt_3_b] {};
    \node[ts_node] (Zt_3_b) [below=of Wt_3_b] {};
    \node[ts_node] (Yt_3_b) [below=of Zt_3_b] {};

    \node[ts_header] (t_4_b) [left=of t_3_b] {$t-4$};
    \node[ts_node_cause_control] (Xt_4_b) [below=0.2cm of t_4_b] {};
    \node[ts_node_condition] (Wt_4_b) [below=of Xt_4_b] {};
    \node[ts_node] (Zt_4_b) [below=of Wt_4_b] {};
    \node[ts_node] (Yt_4_b) [below=of Zt_4_b] {};

    \node[snode_cause] (X_b) [left=of Xt_4_b] {$X$};
    \node[snode_condition] (W_b) [left=of Wt_4_b] {$W$};
    \node[snode_latent] (Z_b) [left=3.cm of Zt_4_b] {$L$}; 
    \node[snode_target] (Y_b) [left=of Yt_4_b] {$Y$};
    \node[ts_header] (name_b) [left=of Z_b] {\textbf{b.)} };


    \draw[->, dotted] (Xt_1_b) -- (Xt_0_b);
    \draw[->, teal, thick] (Yt_1_b) -- (Yt_0_b);
    \draw[->, dotted] (Wt_1_b) -- (Wt_0_b);
    \draw[->, dashed, draw=red!60] (Zt_1_b) -- (Zt_0_b);

    \draw[->, dotted] (Xt_2_b) -- (Xt_1_b);
    \draw[->, teal, thick] (Yt_2_b) -- (Yt_1_b);
    \draw[->, dotted] (Wt_2_b) -- (Wt_1_b);
    \draw[->, dashed, draw=red!60] (Zt_2_b) -- (Zt_1_b);

    \draw[->, dotted] (Xt_3_b) -- (Xt_2_b);
    \draw[->, teal, thick] (Yt_3_b) -- (Yt_2_b);
    \draw[->, dotted] (Wt_3_b) -- (Wt_2_b);
    \draw[->, dashed, draw=red!60] (Zt_3_b) -- (Zt_2_b);

    \draw[->, dotted] (Xt_4_b) -- (Xt_3_b);
    \draw[->, lightgray] (Yt_4_b) -- (Yt_3_b);
    \draw[->, dotted] (Wt_4_b) -- (Wt_3_b);
    \draw[->, dashed, draw=red!60] (Zt_4_b) -- (Zt_3_b);

    \draw[->, dotted] (Xt_1_b) -- (Wt_0_b);
    \draw[->, lightgray] (Xt_1_b) -- (Yt_0_b);
    \draw[->, lightgray] (Wt_1_b) -- (Yt_0_b);
    \draw[->, dotted, draw=red!60] (Zt_1_b) -- (Xt_0_b);
    \draw[->, dotted] (Yt_1_b) -- (Wt_0_b);

    \draw[->, dotted] (Xt_2_b) -- (Wt_1_b);
    \draw[->, lightgray] (Xt_2_b) -- (Yt_1_b);
    \draw[->, lightgray] (Wt_2_b) -- (Yt_1_b);
    \draw[->, dotted, draw=red!69] (Zt_2_b) -- (Xt_1_b);
    \draw[->, dotted] (Yt_2_b) -- (Wt_1_b);

    \draw[->, dotted] (Xt_3_b) -- (Wt_2_b);
    \draw[->, lightgray] (Xt_3_b) -- (Yt_2_b);
    \draw[->, dotted] (Wt_3_b) -- (Yt_2_b);
    \draw[->, dotted, draw=red!60] (Zt_3_b) -- (Xt_2_b);
    \draw[->, dotted] (Yt_3_b) -- (Wt_2_b);

    \draw[->, dotted] (Xt_4_b) -- (Wt_3_b);
    \draw[->, teal, thick] (Xt_4_b) -- (Yt_3_b);
    \draw[->, lightgray] (Wt_4_b) -- (Yt_3_b);
    \draw[->, dotted, draw=red!60] (Zt_4_b) -- (Xt_3_b);
    \draw[->, dotted] (Yt_4_b) -- (Wt_3_b);

    \draw[->, dashed, draw=red!60] (Zt_2_b) -- (Yt_0_b);

    \draw[->, dashed, draw=red!60] (Zt_3_b) -- (Yt_1_b);
    
    \draw[->, dashed, draw=red!60] (Zt_4_b) -- (Yt_2_b);

    \draw[->, teal, very thick] (X_b) to [out=315, in=45](Y_b);
    \draw[->, thick, dotted] (X_b) -- (W_b);
    \draw[->, thick, lightgray] (W_b) to [out=255, in=105] (Y_b);
    \draw[->, draw=red!60, dotted, thick] (Z_b) -- (X_b);
    \draw[->, draw=red!60, dashed, thick] (Z_b) -- (Y_b);
    \draw[->, dotted, thick] (Y_b) to [out=75, in=285] (W_b);

    \end{tikzpicture}
    }
    \caption{a.) This figure illustrates the graphical definition of the CF $\ucetemp{X}{Y}$, i.e. $\ucetemp{X}{Y}(4)$. The figure shows the process graph (left) of a time series graph of which a finite subgraph is shown on the (right). The dotted red edges (latent to observed) and dotted black edges (between observed) have to be ignored for the intervention in the variables $\{\X_X(t-i)\}_{i=0, \dots, 4}$. The red dashed, full grey and teal colored edges are the edges that are not affected by the intervention, and of those edges only the full teal colored edges contribute to $\ucetemp{X}{Y}(4)$. The coefficient $\ucetemp{X}{Y}(4)$ is the sum of all path-coefficients of paths from the red node at $t-4$ to the olive node at $t$ that are composed of the teal coloured edges. On the left of the time series graph we depict its process graph whose nodes and edges admit an analogous interpretation. b.) This figure illustrates the graphical definition of the CCF $\cetemp{X}{Y}{W}(4)$, specifically, the entry $\cetemp{X}{Y}{W}(4)$. The purple coloured nodes are those states of the process $W$ on which we condition. All other edges and nodes admit the same interpretation as in a.)}
    \label{fig: CCF example}
\end{figure}
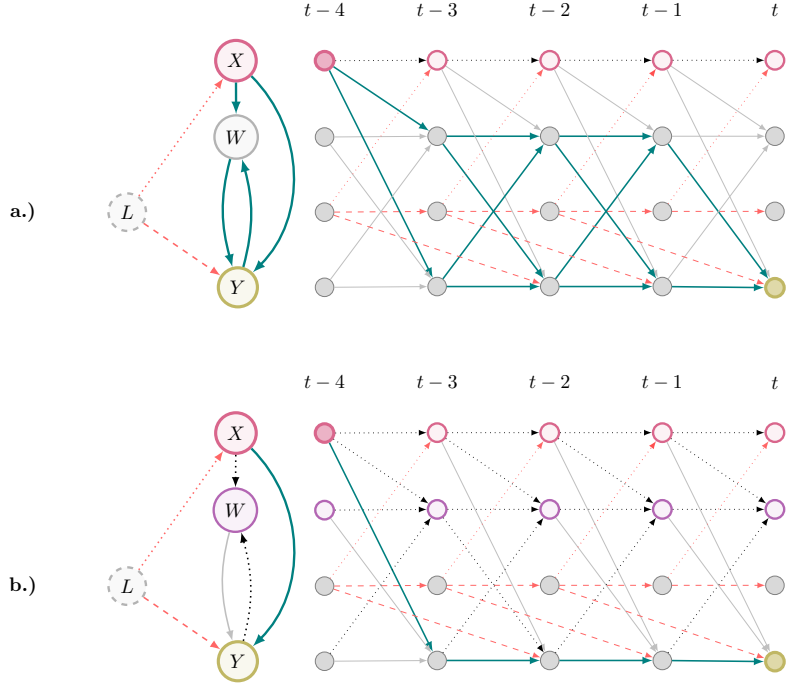
    In Figure \ref{fig: CCF example} we illustrate the definition of CCFs and CFs on a specific time series graph. The system is composed of three observed processes $X,W,Y$ and one latent process $L$. The time series graph is similar to the one in Figure \ref{fig:time_series_graph}, the only difference being a link from $Y$ to $W$. By means of that additional link there is a feed-back loop between these two processes. 
    
    Figure \ref{fig: CCF example}.a) is a graphical illustration of the causal effect $\ucetemp{X}{Y}(4)$. According to Proposition \ref{prop: causal effects process graph} we delete all the links pointing to any of the nodes in $(X(s))_{s \in \Z}$. In Figure \ref{fig: CCF example}.a) we illustrate this by rendering all these deleted links dotted. We obtain the coefficient $\ucetemp{X}{Y}(4)$ by summing the path-coefficients of all directed paths from $X(t-4)$ to $Y(t)$ that do not go through $(X(t-i))_{0 \leq i \leq 3}$. These are all the causal path-ways from $X(t-4)$ to $Y(t)$ on which each link is one of the teal colored links in Figure \ref{fig: CCF example}.a.). 

    Figure \ref{fig: CCF example}.b) provides a graphical interpretation of the CCF $\cetemp{X}{Y}{W}$, specifically, its fourth element $\cetemp{X}{Y}{W}(4)$. In addition to deleting all links that point to any node in $(X(s))_{s \in \Z}$ we delete all links pointing to the nodes in $(W(s))_{s \in \Z}$. These removed links are the dotted links in Figure \ref{fig: CCF example}.b). The coefficient $\cetemp{X}{Y}{W}(4)$ is the sum of path-coefficients of all the paths going from $X(t-4)$ to $Y(t)$ without passing through any $(X(s))_{s \in \Z}$ or $(W(s))_{s \in \Z}$. These are all directed paths that can be composed of the teal colored links in Figure \ref{fig: CCF example}.b.). Note that there is exactly one path from $X(t-4)$ to $Y(t)$ along the teal colored links. 
    
\end{example}

\subsection{Trek rule for process graphs} \label{subsectio: trek-rule}
In Section \ref{section: Preliminaries}, we have mentioned the trek-rule and its importance to several research contributions on SEMs. It is therefore relevant to observe that combining the SEP-representation of a SVAR process with the generalised path-rule (\ref{eq: generalised path-rule}) generalises the trek-rule to the SEP-representation at the level of the projected process graph $G'$. To spell this out, we define the \textit{trek-monomial-filter} of a chosen trek $\pi$ on $G'$ to be 
\begin{align*}
    \C^{(\pi)} &\coloneqq \begin{cases}
        \Lambda^{(\trekL(\pi))} \ast \C_V^\linternal \tcvl \Lambda^{(\trekR(\pi))} & \text{if $\pi$ is of form (\ref{trek-type I})} \\
        \Lambda^{(\trekL(\pi))} \ast \C_{V_0^L, V_0^R}^\linternal \tcvl \Lambda^{(\trekR(\pi))} & \text{if $\pi$ is of form (\ref{trek-type II})}
    \end{cases}.
\end{align*}
\begin{proposition}[Trek-rule for processes] \label{prop: trek-rule processes}
The cross-covariance sequence between two observed processes $V,W \in \mathbf{O}$ is given by the sum over all trek-monomial-filters that are associated with the treks from $V$ to $W$, i.e.,
\begin{align*}
    \C_{V,W} &=\sum_{\pi \in \mathcal{T}(V, W)} \C^{(\pi)}.
\end{align*}
\end{proposition}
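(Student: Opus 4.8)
The plan is to mirror the classical derivation of the trek-rule (\ref{eq: trek-rule}) for linear Gaussian SCMs, replacing the covariance factorisation $\Sigma = (I-A)^{-\top}\Omega(I-A)^{-1}$ by the auto-covariance formula (\ref{eq: corss covariance process graph}) of Theorem \ref{prop: summary SEM}, and Wright's path-rule (\ref{eq: path-rule}) by the generalised path-rule (\ref{eq: generalised path-rule}) of Proposition \ref{prop: generalised path-rule}. The decisive step is to read off the $(V,W)$-entry of the matrix-filter product $(\Dirtmp^\infty)^\top \ast \C_\mathbf{O}^\linternal \tcvl \Dirtmp^\infty$ as a double sum over intermediate source nodes, and then to recognise each resulting summand as the trek-monomial-filter of a uniquely determined trek.

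First I would expand the auto-covariance formula entry-wise. Using that $\ast$ and $\tcvl$ are bilinear over the matrix-index sums and satisfy the mixed associativity $(F \ast G)\tcvl K = F\ast(G\tcvl K)$ for filters $F,G,K$ — which is immediate from their defining formulas — together with $((\Dirtmp^\infty)^\top)_{V,a} = (\Dirtmp^\infty)_{a,V}$, the formula (\ref{eq: corss covariance process graph}) yields
\begin{align*}
    \C_{V,W} &= \sum_{a,b \in \mathbf{O}} (\Dirtmp^\infty)_{a,V} \ast \C^\linternal_{a,b} \tcvl (\Dirtmp^\infty)_{b,W}.
\end{align*}
Substituting $(\Dirtmp^\infty)_{a,V} = \sum_{\sigma \in \mathrm{P}(a,V)} \Dirtmp^{(\sigma)}$ and $(\Dirtmp^\infty)_{b,W} = \sum_{\tau \in \mathrm{P}(b,W)} \Dirtmp^{(\tau)}$ turns the right-hand side into a sum indexed by tuples $(a,b,\sigma,\tau)$ consisting of a source pair together with a directed path $\sigma$ from $a$ to $V$ and a directed path $\tau$ from $b$ to $W$.

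Next I would split the sum according to the two ways in which $\C^\linternal_{a,b}$ can fail to vanish. For $a=b$ one has $\C^\linternal_{a,a} = \C_a^\linternal$, and the pair $(\sigma,\tau)$ assembles into a trek of type (\ref{trek-type I}) with top node $a$ whose left and right legs are $\sigma$ and $\tau$; since $\Lambda^{(\trekL(\pi))} = \Dirtmp^{(\sigma)}$ and $\Lambda^{(\trekR(\pi))} = \Dirtmp^{(\tau)}$ by definition of the path-filter, the corresponding summand is exactly $\C^{(\pi)}$. For $a \neq b$, the off-diagonal $\C^\linternal_{a,b}$ is non-zero precisely when $G'$ carries a bidirectional edge $a \leftrightarrow b$ — this is exactly how the latent projection records a latent-confounding trek between $a$ and $b$ — and then $(\sigma,\tau)$ assembles into a trek of type (\ref{trek-type II}) with middle edge $a \leftrightarrow b$, again with trek-monomial-filter equal to the summand. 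Conversely, every trek in $\mathcal{T}(V,W)$ arises from exactly one such tuple, so the reindexing is a bijection and $\C_{V,W} = \sum_{\pi \in \mathcal{T}(V,W)} \C^{(\pi)}$.

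The main obstacle I anticipate is not the combinatorial bijection — which is identical to the finite SCM case — but the analytic bookkeeping: the sets $\mathrm{P}(a,V)$, and hence $\mathcal{T}(V,W)$, are in general infinite (every directed cycle of $G'$ produces further paths), so interchanging the finite matrix-index sums with the infinite path sums and collecting the outcome as a single sum over treks requires an absolute-summability (Fubini-type) justification. This is supplied by the standing hypotheses of Theorem \ref{prop: summary SEM}, under which $\Dirtmp^\infty = \sum_{k} \Dirtmp^k$ is entry-wise absolutely summable; since $\C^\linternal_\mathbf{O}$ is absolutely summable as well, all the filter convolutions and the rearrangement into $\sum_{\pi} \C^{(\pi)}$ converge absolutely, which legitimises every interchange performed above.
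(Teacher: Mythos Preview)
Your proposal is correct and follows essentially the same route as the paper: the paper's proof simply cites Theorem \ref{prop: summary SEM} for the factorisation of $\C_\mathbf{O}$, the generalised path-rule (\ref{eq: generalised path-rule}), and then refers to the classical trek-rule argument (\cite[Theorem 4.2]{drton2018algebraic}), which is exactly the entry-wise expansion and bijection between $(a,b,\sigma,\tau)$-tuples and treks that you have written out. Your additional remarks on absolute summability make explicit an analytic step the paper leaves implicit in that reference.
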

\begin{proof}
This follows from Proposition \ref{prop: summary SEM}, the generalised path-rule (\ref{eq: generalised path-rule}) and applying the same arguments as in the proof of \cite[Theorem 4.2]{drton2018algebraic}.  
\end{proof}

\section{Reasoning about structural equation processes in the frequency domain} \label{section: frequency domain}
In this section, we transfer the concepts and observations we developed in the previous section to the frequency domain. This turns out to be convenient, as applying the Fourier-Transformation makes the expressions we encountered in the last section simpler and more compact. For the remainder of this section we fix a latent-component time series graph $\mathcal{G}$ and a SVAR-process specified by $(\Phi, \mathbf{w})$ that is consistent with $\mathcal{G}$ and that satisfies the assumptions stated in Theorem \ref{prop: summary SEM}. As before, we denote the process graph of $\mathcal{G}$ by $G$ and its latent projection by $G'$.

\subsection{The frequency domain SEP representation} Recall that the \textit{Fourier-Transformation} of an absolutely summable filter $\lambda = (\lambda(s))_{s \in \Z}$ is the complex-valued function $\mathcal{F}(\lambda) :[0, 2\pi) \to \mathbb{C}$ defined by 
\begin{displaymath}
    \mathcal{F}(\lambda)(\omega) \coloneqq \sum_{s \in \mathbb{Z}} \lambda(s) \exp(-i \omega s).
\end{displaymath}  
It is well known that $\mathcal{F}(\lambda \ast \mu) = \mathcal{F}(\lambda) \mathcal{F}(\mu)$, where the right-hand side is point-wise multiplication of functions. 
If $\Lambda = (\Lambda(s))_{s \in \mathbb{Z}}$ is a matrix-valued filter, then $\mathcal{F}(\Lambda)$ is the matrix-valued function whose $(i,j)$-th entry is $\mathcal{F}(\Lambda_{i,j})$, i.e., the Fourier-Transformation of the filter whose values are the $(i,j)$-th entries of $\Lambda$. Accordingly, if $\Lambda$ is a $p \times n$-dimensional filter and $\Gamma$ a $n \times m$-dimensional filter, then we have that $\mathcal{F}(\Lambda \ast \Gamma) = \mathcal{F}(\Lambda)\mathcal{F}(\Gamma)$, where the right hand-side is point-wise matrix multiplication. Similarly, Fourier-Transformation takes tilted convolutions to multiplication by complex conjugate, that is, $\mathcal{F}(\Lambda \tcvl \Gamma) = \mathcal{F}(\Lambda) \mathcal{F}(\Gamma) ^\ast$.
\begin{definition}[Direct effect-function]
Let $V,W \in \mathbf{O}$ be a pair of observed processes and $L \in \mathbf{L}$ a latent process, then the \textit{direct effect-function} of $V$ on $W$ resp. $L$ on $V$ is defined as the Fourier-Transformation of the direct effect-filter $\dirtmp{V}{W}$ resp. $\ltmp{L}{W}$, that is,
\begin{align} \label{eq: dir_frq_effect}
    \dirfrq{V}{W} &\coloneqq \mathcal{F}(\dirtmp{V}{W}), & \lfrq{L}{V} &\coloneqq \mathcal{F}(\ltmp{L}{V})\,.
\end{align}
\end{definition}
One advantage of switching to the frequency domain is that the direct effect-functions can be written compactly as rational functions on the complex unit circle whose coefficients are directly related to those of the SVAR parameters $\Phi$. 
\begin{lemma}\label{lemma: analytical expression transfer-function}
    Let $V,W$ be a pair of processes, then the transfer-function $\dirfrq{V}{W}$ as defined in equation (\ref{eq: dir_frq_effect}) is the restriction of a rational polynomial to the complex unit circle, namely 
    \begin{align}\label{eq: direct transfer function}
        \dirfrq{V}{W}(\omega) &= \frac{\sum_{k =0}^p \phi_{V, W}(k) \exp(-i\omega k)}{1- \sum_{k=1}^p \phi_{V, V}(k) \exp(-i\omega k)}.  
    \end{align}
\end{lemma}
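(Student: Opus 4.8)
The plan is to establish the rational-function formula by recognizing that the recursion defining $\dirtmp{V}{W}$ in equation (\ref{eq: dir_tmp_effect}) is exactly the kind of linear recurrence whose Fourier transform yields a ratio of polynomials. First I would compute the Fourier-Transformation of the recursion directly. The defining relation can be written uniformly for all $s \in \Z$ as
\begin{align*}
    \dirtmp{V}{W}(s) &= \sum_{j=1}^p \dirtmp{V}{W}(s-j)\,\phi_{W,W}(j) + \phi_{V,W}(s),
\end{align*}
where we read $\phi_{V,W}(s) = 0$ for $s < 0$ or $s > p$ and $\dirtmp{V}{W}(s) = 0$ for $s < 0$; one checks that this single equation reproduces all three cases of (\ref{eq: dir_tmp_effect}). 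Applying $\mathcal{F}$ to both sides and using linearity together with the shift rule $\mathcal{F}\big((\dirtmp{V}{W}(\cdot - j))\big)(\omega) = \dirfrq{V}{W}(\omega)\exp(-i\omega j)$ gives
\begin{align*}
    \dirfrq{V}{W}(\omega) &= \dirfrq{V}{W}(\omega) \sum_{j=1}^p \phi_{W,W}(j)\exp(-i\omega j) + \sum_{k=0}^p \phi_{V,W}(k)\exp(-i\omega k).
\end{align*}

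Next I would solve this scalar equation for $\dirfrq{V}{W}(\omega)$. Collecting the terms containing $\dirfrq{V}{W}(\omega)$ on the left yields
\begin{align*}
    \dirfrq{V}{W}(\omega)\Big(1 - \sum_{j=1}^p \phi_{W,W}(j)\exp(-i\omega j)\Big) &= \sum_{k=0}^p \phi_{V,W}(k)\exp(-i\omega k),
\end{align*}
and dividing by the bracketed factor produces precisely the claimed formula (\ref{eq: direct transfer function}). The division is legitimate provided the denominator is nonzero for every $\omega \in [0, 2\pi)$; this is where the stability condition (\ref{condition: stability II}) enters, since $\sum_{j=1}^p |\phi_{W,W}(j)| < 1$ forces $|\sum_{j=1}^p \phi_{W,W}(j)\exp(-i\omega j)| < 1$ by the triangle inequality, so the denominator stays bounded away from zero.

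The one point requiring genuine care — and what I expect to be the main obstacle — is justifying that $\dirtmp{V}{W}$ is absolutely summable, so that its Fourier-Transformation exists and the termwise manipulations above are valid. This is not automatic from the recursion alone; it is again the stability condition (\ref{condition: stability II}) that guarantees it. Concretely, I would argue that the sequence $(\dirtmp{V}{W}(s))_{s \geq 0}$ satisfies a homogeneous linear recurrence for $s > p$ whose characteristic polynomial is $z^p - \sum_{j=1}^p \phi_{W,W}(j) z^{p-j}$; the condition $\sum_{j=1}^p |\phi_{W,W}(j)| < 1$ places all roots strictly inside the unit disk, so $\dirtmp{V}{W}(s)$ decays geometrically and is therefore absolutely summable. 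Once summability is secured, the Fourier computation is routine and the formula follows. I would present the summability argument first as a lemma-style remark, then carry out the transform, and finally invoke stability once more to rule out poles on the unit circle.
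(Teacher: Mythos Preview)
Your proposal is correct and follows essentially the same route as the paper's proof: both derive a linear functional equation for the transform from the recursion defining $\dirtmp{V}{W}$ and solve it algebraically, with the stability condition (\ref{condition: stability II}) ensuring the denominator is nonvanishing on the unit circle. The paper phrases the computation in the language of the formal power series $h(z)=\sum_{k}\dirtmp{V}{W}(k)z^{k}$ and then substitutes $z=\exp(-i\omega)$, whereas you work directly with the Fourier transform and the shift rule; these are the same manipulation in different notation. Your explicit attention to absolute summability (via the characteristic-polynomial argument) is more careful than the paper's proof, which silently relies on summability established elsewhere in the proof of Theorem~\ref{prop: summary SEM}.
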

\begin{proof}
    Let $V,W \in \mathbf{V}$ and let us abbreviate the coefficients $\phi_{V,W}(k)$ by $b_k$, and $\phi_{W, W}(k)$ by $a_k$. Recall that we defined the link filter $\dirtmp{V}{W}$ recursively as follows
    \begin{align*}
        \dirtmp{V}{W}(k) &= \begin{cases}
            \sum_{j=1}^k \dirtmp{V}{W}(k -j) a_j + b_k & \text{if $0 \leq k \leq p$} \\
            \sum_{j=1}^p \dirtmp{V}{W}(k-j) a_j & \text{if $k > p$}
        \end{cases}.
    \end{align*}
    The formal power series associated to $\dirtmp{V}{W}$ is defined as follows
    \begin{align*}
        h(z) &= \sum_{k \in \Z} \dirtmp{V}{W}(k) \cdot z^k \\
        &= \sum_{k=0}^p \dirtmp{V}{W}(k) \cdot z^k + \sum_{k > p} \dirtmp{V}{W}(k) \cdot z^k \\
        &= \sum_{k= 0 }^p z^k b_k + \sum_{k = 1}^{p} a_k \cdot z^k \cdot h(z) ,
    \end{align*}
    and therefore, 
    \begin{align*}
        h(z) &= \frac{\sum_{k=0}^p b_k \cdot z^k}{1 - \sum_{k =1}^p a_k \cdot z^k }.
    \end{align*}
    Finally, we obtain the link-function $\dirfrq{V}{W}(\omega)$ by setting $z = \exp(-i \omega)$ for which it is well defined because of condition (\ref{condition: stability II}). 
\end{proof}

In the previous section, we assigned to a SVAR process $(\Phi, \mathbf{w})$ its SEP-representation $(\Dirtmp, \C_\mathbf{O}^\linternal)$ with which we could express the ACS of the observed processes $\mathbf{O}$, see equation (\ref{eq: corss covariance process graph}). We now apply the Fourier-Transformation to that pair and to the expression for the ACS of $\mathbf{O}$.

The spectral density of the process $\mathbf{O}$, resp. of their (projected) internal dynamics, is defined to be the Fourier-Transformation of the ACS $\C_\mathbf{O}$, resp. $\C_\mathbf{O}^\internal$ and $\C_\mathbf{O}^\linternal$, namely 
\begin{align*}
    \Sp{\mathbf{O}}& \coloneqq \mathcal{F}(\C_\mathbf{O}), & \Sp{\mathbf{O}}^\internal &\coloneqq \mathcal{F}(\C_\mathbf{O}^\internal), & \Sp{\mathbf{O}}^\linternal &\coloneqq \mathcal{F}(\C_\mathbf{O}^\linternal),
\end{align*}
where the Fourier-Transformation is applied element-wise. The spectral density $\Sp{\mathbf{O}}$ is hermitian, that is, $\Sp{\mathbf{O}} = (\Sp{\mathbf{O}}^\ast)^\top$, where $^\ast$ denotes element-wise complex conjugation. 

We call the Fourier-Transformation $\Dirfrq\coloneqq \mathcal{F}(\Dirtmp)$ the \textit{direct effect-function}. We denote the identity matrix as $\mathcal{I}$, its dimension will be clear from the context. When the Fourier-Transformation is applied to equation (\ref{eq: corss covariance process graph}) one obtains the following expression for the spectral density of the observed processes
\begin{equation} \label{eq: frq sep cross spectrum}
    \begin{split}
        \mathcal{S}_\mathbf{O} &= (\Dirfrq^\infty)^\top \mathcal{S}_\mathbf{O}^\linternal (\Dirfrq^\infty)^\ast \\
        &= (\sum_{k \geq 0} \Dirfrq^k)^\top \mathcal{S}_\mathbf{O}^\linternal (\sum_{k \geq 0} \Dirfrq^k)^\ast \\
        &= (\mathcal{I} - \Dirfrq)^{-\top} \mathcal{S}_\mathbf{O}^\linternal (\mathcal{I} - \Dirfrq)^{-\ast}.
    \end{split}
\end{equation}
The spectral density of the projected internal dynamics $\Sp{\mathbf{O}}^\linternal$ splits into two components, one of which is due to the internal dynamics $\X_\mathbf{O}^\linternal$ and the other one of which is due to the influence of the latent factors. To formulate this decomposition, let us denote the transfer-function associated with the direct effects of the latent processes on the observed processes by $\mathfrak{J} \coloneqq \mathcal{F}(\Gamma)$, so that the spectral density of the projected internal dynamics takes the following form 
\begin{equation} \label{eq: frq sep cross spectrum projected internal}
    \begin{split}
        \mathcal{S}_\mathbf{O}^\linternal &= \mathcal{S}_\mathbf{O}^\internal + \mathfrak{J}^\top \mathcal{S}_{\mathbf{L}} \mathfrak{J}^\ast \\
        &= \mathcal{S}_\mathbf{O}^\internal + \sum_{L \in \mathbf{L}} \mathcal{S}_L \mathfrak{J}^\top_{L \to \mathbf{O}} \mathfrak{J}_{L \to \mathbf{O}}^\ast. 
    \end{split}
\end{equation}
We refer to the tuple $(\Dirfrq, \Sp{\mathbf{O}}^\linternal)$ as the \emph{frequency domain SEP representation} of the observed process $\X_\mathbf{O}$. In the Appendix \ref{sec: commutative diagram}, we spell out the mathematical structures in which the time and frequency SEP representations for a given latent factor process graph $G$ reside. This formalisation makes it explicit that the SEP representations entail the SEM parameterisations of the latent projection $G'$. We organise these formulations and observations using a commutative diagram in the Appendix in Figure \ref{fig: summary main results}.

\begin{example}[The spectral density structure in an instrumental process setting, see Figure \ref{fig:latent_factor_summary_graph}] \label{ex: instrumental process}
    In this example, we spell out the frequency domain dependency structure among the process that are related to each other as depicted in the process graph of Figure \ref{fig:latent_factor_summary_graph}. Specifically, we describe the entries in the spectral density matrix. To this end, we start with the direct transfer function 
    \begin{align*}
        \Dirfrq = \begin{pmatrix}
            0 & \dirfrq{X}{M} & 0 \\
            0 & 0 & \dirfrq{M}{Y} \\
            0 & 0 & 0 
        \end{pmatrix},
    \end{align*} so that  
    \begin{align*}
        (\mathcal{I}-\Dirfrq)^{-1} = \sum_{k=0}^\infty \Dirfrq^k = \begin{pmatrix}
            1 & \dirfrq{X}{M} & \dirfrq{X}{M} \dirfrq{M}{Y} \\
            0 & 1 & \dirfrq{M}{Y} \\
            0 & 0 & 1 
        \end{pmatrix}.
    \end{align*}
    The transfer function specifying the direct effects of the latent process on the observed processes is 
    \begin{align*}
        \mathcal{J} = \begin{pmatrix}
            0 &
            \lfrq{L}{M} &
            \lfrq{L}{Y}
        \end{pmatrix}. 
    \end{align*}
    We apply formula (\ref{eq: frq sep cross spectrum projected internal}) to obtain the cross-dependencies among the projected internal dynamics $\X_\mathbf{O}^\linternal$ in the frequency domain, i.e.,   
    \begin{align*}
        \Sp{\mathbf{O}}^\linternal &= \begin{pmatrix}
            \Sp{X}^\internal & 0 & 0 \\
            0 & \Sp{M}^\internal + |\mathcal{J}_{L \to M}|^2 \Sp{L} & \mathcal{J}_{L \to M} \mathcal{J}_{L \to Y}^\ast \Sp{L} \\
            0 & \mathcal{J}_{L \to Y} \mathcal{J}_{L \to M}^\ast \Sp{L} & \Sp{Y}^\internal + |\mathcal{J}_{L \to Y}|^2 \Sp{L}
        \end{pmatrix}
    \end{align*}
    Using formula (\ref{eq: frq sep cross spectrum}), we get for the diagonal entries in the spectral density of the observed processes the expressions
    \begin{align*}
        \Sp{X} &= \Sp{X}^\internal\\
        \Sp{M} &= |\dirfrq{X}{M}|^2 \Sp{X}^\internal + \Sp{M}^\linternal \\
        \Sp{Y} &= |\dirfrq{X}{M}\dirfrq{M}{Y}|^2 \Sp{X}^\internal + |\dirfrq{M}{Y}|^2\Sp{M}^\linternal + 2 \mathrm{Re}(\dirfrq{M}{Y}^\ast \Sp{Y,M}^\linternal ) + \Sp{Y}^\linternal,
    \end{align*}
    and for the off-diagonal entries we obtain 
    \begin{align*}
        \Sp{X, M} &= \dirfrq{X}{M}^\ast \Sp{X}^\internal\\
        \Sp{X, Y} &= \dirfrq{X}{M}^\ast\dirfrq{M}{Y}^\ast \Sp{X}^\internal \\
        \Sp{M, Y} &= |\dirfrq{X}{M}|^2 \dirfrq{M}{Y}^\ast \Sp{X}^\internal + \dirfrq{M}{Y}^\ast \Sp{M}^\linternal + \Sp{M, Y}^\linternal.
    \end{align*}
    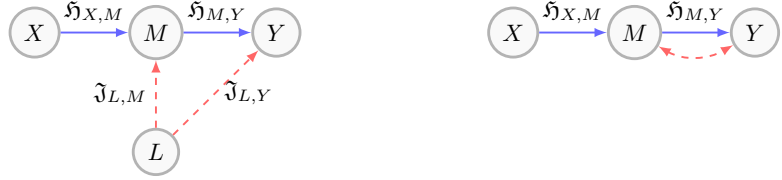
\begin{figure}
    \centering
    \resizebox{\textwidth}{!}{
    \begin{tikzpicture}[
       ts_node/.style={circle, draw=gray, fill=gray!60, thick}, 
       header/.style={rectangle, minimum size=1cm, thick},
       snode/.style={circle, draw=gray!60, fill=gray!5, very thick}
    ]

    \node[header] (summary) at (0,0) {a.) $G=(\mathbf{O} \cup \mathbf{L}, D^\mathbf{O} \cup D^{\mathbf{L}, \mathbf{O}})$};
    \node[snode] (X) [below=0.4cm of summary] {$X$};
    \node[snode] (M) [right=of X] {$M$};
    \node[snode] (L) [below=of M] {$L$};
    \node[snode] (Y) [right=of M] {$Y$};  

    \node[header] (summary_l) [right=3.0cm of summary] {b.) $G' =(\mathbf{O}, D^\mathbf{O}, B)$};
    \node[snode] (X_l) [below=0.4cm of summary_l] {$X$};
    \node[snode] (M_l) [right=of X_l] {$M$};
    \node[snode] (Y_l) [right=of M_l] {$Y$};  

    \draw[->, thick, draw=blue!60] (X) -- node[above] {$\dirfrq{X}{M}$} (M);
    \draw[->, thick, draw=blue!60] (M) -- node[above] {$\dirfrq{M}{Y}$} (Y);
    \draw[->, dashed, thick, draw=red!60] (L) -- node[left] {$\lfrq{L}{M}$} (M); 
    \draw[->, dashed, thick, draw=red!60] (L) -- node[right] {$\lfrq{L}{Y}$} (Y);

    \draw[->, thick, draw=blue!60] (X_l) -- node[above] {$\dirfrq{X}{M}$} (M_l);
    \draw[->, thick, draw=blue!60] (M_l) -- node[above] {$\dirfrq{M}{Y}$} (Y_l);
    \draw[<->, dashed, thick, red!60] (M_l) edge[out=330, in=210] (Y_l);

    \end{tikzpicture}}
    \caption{This figure depicts the process graph of a latent factor-time series graph which we leave unspecified. The observed processes are $\mathbf{O} = \{X,M,Y \}$, and there is one latent process $L$ which influences both $M$ and $Y$.  b.) The latent projection $G'$ of the graph $G$ shown in a.), where the latent confounding of $M$ and $Y$ by the latent process $L$ is represented by a bidirectional edge between $M$ and $Y$.}
    \label{fig:latent_factor_summary_graph}
\end{figure}
\end{example}

\begin{remark}[On identifying transfer-functions in an instrumental process setting, see Figure \ref{fig:latent_factor_summary_graph}]
    Example \ref{ex: instrumental process} is the generic example of a instrumental process setting \cite{thams2022instrumental, mogensen2022instrumental} at the level of the process graph. Although the link $M \to Y$ is confounded by $L$, one can, in principle, recover the function $\dirfrq{M}{Y}$ from the spectral density of the observed processes. To this end, we start by assuming that $(\Phi, \mathbf{w})$ specifies a SVAR-process of order one that is consistent with the process graph $G$ from Figure \ref{fig:latent_factor_summary_graph}. According to the relations from Example \ref{ex: instrumental process}, the link-functions can be calculated from the observed spectral density as:  
    \begin{align*}
        \dirfrq{X}{M} &= \frac{\Sp{M,X}}{\Sp{X}} & \dirfrq{M}{Y} &= \frac{\Sp{Y,X}}{\Sp{M,X}} = \frac{\Sp{Y,X}}{\dirfrq{X}{M}\Sp{X}}.
    \end{align*}
    Since $\Sp{X}$ is non-zero at every frequency $\omega$ due to the stability condition (\ref{condition: stability II}), the first of these equations shows that $\dirfrq{X}{M}$ is always identifiable. The direct transfer-function $\dirfrq{M}{Y}$ is identifiable for every frequency $\omega \in [0, 2 \pi)$ if the function $\dirfrq{X}{M}$ is non-zero at every frequency. To undertsand when $\dirfrq{X}{M}$ is non-zero, recall that $\dirfrq{X}{M}$ is a rational polynomial restricted to the unit circle with numerator
    \begin{align*}
        \phi_{X, M}(0) + \phi_{X, M}(1)z.
    \end{align*}
    There are two cases in which this polynomial is zero. First, $\dirfrq{X}{M}$ is zero at every frequency if $\phi_{X, M}(k)$ is zero for every $k\in \{0,1\}$. Second, $\dirfrq{X}{M}$ is zero if both $\phi_{X,M}(0)$ and $\phi_{X,M}(1)$ are non-zero and 
    \begin{align*}
        \exp(i \omega) &= - \frac{\phi_{X,M}(1)}{\phi_{X, M}(0)}, \text{ and $\omega \in [0, 2\pi)$}.  
    \end{align*}
    Since both coefficients are real, this equality can only hold if $\omega \in \{ 0, \pi\}$, that means $\phi_{X,M}(0) = \pm \phi_{X, M}(1)$. Similar arguments show that for a general SVAR-process of order $p$, the link-function $\dirfrq{M}{Y}$ is not directly identifiable for at most $p$ frequencies. However, if the function  $\dirfrq{X}{M}$ vanishes on only finitely many frequencies $\omega_i$, then we can approximate $\dirfrq{M}{Y}$ arbitrarily well as the limit
    \begin{align*}
        \dirfrq{M}{Y}(\omega_i) &= \lim_{\omega \to \omega_i} \frac{\Sp{Y,X}(\omega)}{\Sp{M,X}(\omega)}.  
    \end{align*}
    This means that the only case in which we cannot infer anything about the transfer-function $\dirfrq{M}{Y}$ is when $\phi_{X, M}(k)=0$ for all $0 \leq k \leq p$, i.e., when $X$ is in fact not an instrumental process for determining the causal effect of $M$ on $Y$.  
\end{remark}

\subsection{Causal effects between processes in the frequency domain} 
Let $X,Y \in \mathbf{O}$ be two observed processes and $\mathbf{Z}\subset \mathbf{O}$ a collection of observed processes. In the previous section, we introduced the filter $\cetemp{\mathbf{X}}{Y}{\mathbf{Z}}$ as the object that quantifies the causal influence of $X$ on $Y$ along all causal path-ways on $G'$ that do not pass through $\mathbf{Z}$ or revisit $X$. In this subsection, we assume that the CCF $\cetemp{X}{Y}{\mathbf{Z}}$ exists and is absolutely summable. This is true, for example, if the SVAR parameter $\Phi$ satisfies the assumption spelled out in the Appendix in Proposition \ref{prop: supplement sep existence condition}.

Suppose $\pi$ is a directed path on $G'$, then we define its \textit{path-function} to be the Fourier-Transformation of its path-filter, i.e., $\Dirfrq^{(\pi)} \coloneqq \mathcal{F}(\Lambda^{(\pi)})$, which is the point-wise product of the
direct effect-functions associated with the links that constitute $\pi$. We call the Fourier-Transformation of $\cetemp{X}{Y}{\mathbf{Z}}$ the \textit{controlled causal transfer function (CCTF)} and denote it as 
\begin{align*}
    \cefrq{X}{Y}{\mathbf{Z}} &\coloneqq \mathcal{F}(\cetemp{X}{Y}{\mathbf{Z}})= \sum_{\pi \in \mathrm{P}_{\mathbf{Z}\cup \{X \}}(X,Y)} \Dirfrq^{(\pi)}.
\end{align*}
If $\mathbf{Z} = \emptyset$, then we abbreviate the CCTF by $\ucefrq{X}{Y}$ and call it the \textit{causal transfer function (CTF)}. Furthermore, applying the Fourier-Transformation to equation (\ref{eq: generalised path-rule}) gives Wright's path-rule in the frequency domain for the process graph.
Note that the CTF $\cefrq{X}{Y}{\mathbf{Z}}$ can be decomposed into its modulus- and phase-function, that is, 
\begin{align*}
    \cefrq{X}{Y}{\mathbf{Z}} &= r_{X \to Y | \Do(\mathbf{Z})=0} \exp(i \theta_{X \to Y | \Do(\mathbf{Z})=0}).
\end{align*}
The modulus function $r_{X \to Y | \Do(\mathbf{Z})=0}$ provides a controlled causal effect quantification in the frequency domain.
\begin{example}[Path rule in the frequency domain]
    \begin{figure}
    \centering
    \resizebox{0.8\textwidth}{!}{
    \begin{tikzpicture}[
       ts_node/.style={circle, draw=gray, fill=gray!60, thick}, 
       header/.style={rectangle, minimum size=1cm, thick},
       snode/.style={circle, draw=gray!60, fill=gray!5, very thick}
    ]

    \node[header] (a) at (0,0) {a.) $G_a$};
    \node[snode] (X_a) [below=0.2cm of a] {$X$};
    \node[snode] (M_a) [below=of X_a] {$M$};
    \node[snode] (Y_a) [below=of M_a] {$Y$};  

    \node[header] (b) [right=3.cm of a] {b.) $G_b$};
    \node[snode] (Z_b) [below=0.2cm of b] {$Z$};
    \node[snode] (X_b) [below= of Z_b] {$X$};
    \node[snode] (Y_b) [below=of X_b] {$Y$};
    \node[snode] (M_b) [right=of X_b] {$M$};

    \node[header] (c) [right=3.cm of b] {c.) $G_c$};
    \node[snode] (Z_c) [below=0.2cm of c] {$Z$};
    \node[snode] (X_c) [below=of Z_c] {$X$};
    \node[snode] (Y_c) [right=of X_c] {$Y$};

    \draw[->, draw=blue!60, thick] (X_a) -- node[left] {$\dirfrq{X}{M}$} (M_a);
    \draw[->, draw=blue!60, thick] (M_a) -- node[left] {$\dirfrq{M}{Y}$} (Y_a);

    \draw[->, draw=blue!60, thick] (Z_b) -- node[left] {$\dirfrq{Z}{X}$} (X_b);
    \draw[->, draw=blue!60, thick] (Z_b) -- node[right] {$\dirfrq{Z}{M}$} (M_b);
    \draw[->, draw=blue!60, thick] (Z_b) to [out=190, in=170] node[left] {$\dirfrq{Z}{Y}$} (Y_b);
    \draw[->, draw=blue!60, thick] (X_b) -- node[left] {$\dirfrq{X}{Y}$} (Y_b);
    \draw[->, draw=blue!60, thick] (X_b) -- node[above] {$\dirfrq{X}{M}$} (M_b);
    \draw[->, draw=blue!60, thick] (M_b) -- node[right] {$\dirfrq{M}{Y}$} (Y_b);

    \draw[->, draw=blue!60, thick] (Z_c) -- node[left] {$\dirfrq{Z}{X}$} (X_c);
    \draw[->, draw=blue!60, thick] (X_c) to [out=30, in=150] node[above] {$\dirfrq{X}{Y}$} (Y_c);
    \draw[->, draw=blue!60, thick] (Y_c) to [out=210, in = 330] node[below] {$\dirfrq{Y}{X}$} (X_c);
    \draw[->, draw=blue!60, thick] (Z_c) to [out=0, in=90] node[right] {$\dirfrq{Z}{Y}$} (Y_c);
    \end{tikzpicture}}
    \caption{This figure shows three example process graphs. In the Appendix \ref{section: paramterisations} we specify for each of the three process graphs $G_{a}, G_b, G_c$ a SVAR process. The edges in each of the process graphs are parameterised by the functions as expressed in Lemma \ref{lemma: analytical expression transfer-function}.}
    \label{fig:summary_graphs_demo}
\end{figure}
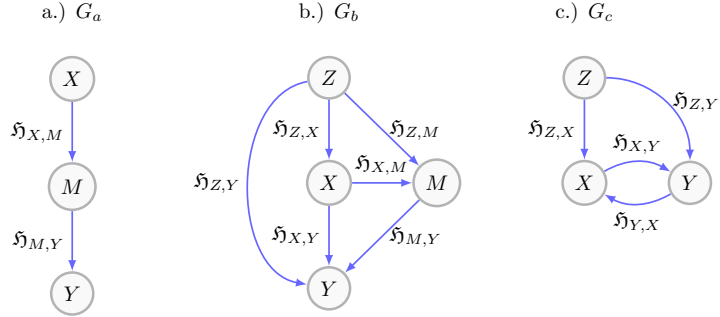 
    Let us illustrate the frequency domain causal effects on the process graphs in Figure \ref{fig:summary_graphs_demo}.
    
    For the process graph $G_a$, the transfer function associated to the causal effect of $X$ on $Y$ is the product of the transfer-functions parameterising the links $X\to M$ and $M \to Y$, i.e. $\ucefrq{X}{Y} = \dirfrq{X}{M} \dirfrq{M}{Y}$, as visualised in Figure \ref{fig:path_rule_figure_a}.
    \begin{figure}
        \centering
        \includegraphics[width=\textwidth]{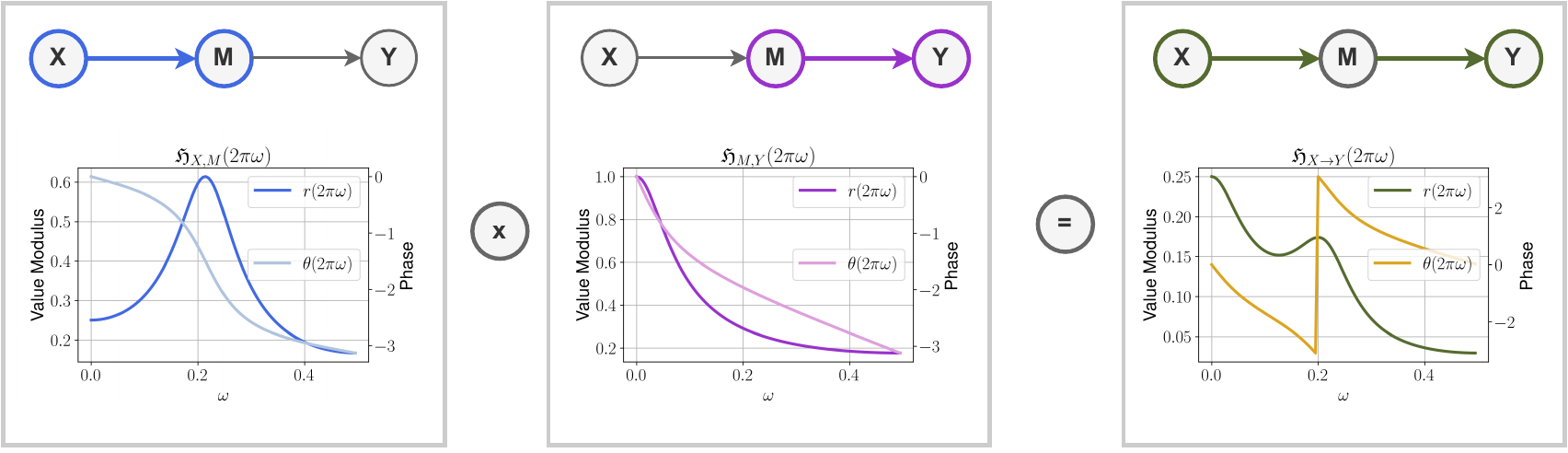}
        \caption{This figure illustrates the path-rule on the process graph $G_a$ from Figure \ref{fig:summary_graphs_demo}. Each of the three figures shows a directed path in $G_a$ and below its associated transfer-function. We plotted both its modulus $r()$ and its phase $\theta()$. The modulus $r()$ is plotted in the same color as the path to which the modulus belongs. The specific forms of the transfer-functions arise from the SVAR process defined in the Appendix \ref{section: paramterisations}.}
        \label{fig:path_rule_figure_a}
    \end{figure}
    
    For the process graph $G_b$, the transfer function associated to the causal effect of $X$ on $Y$, i.e. $\ucefrq{X}{Y}$, is given by the sum $\dirfrq{X}{Y} + \dirfrq{X}{M}\dirfrq{M}{Y}$. Its squared modulus takes the following form
    \begin{align*}
        r_{X\to Y}^2 &= |\dirfrq{X}{Y}|^2 + |\dirfrq{X}{M}\dirfrq{M}{Y}|^2 + 2 \mathrm{Re}(\dirfrq{X}{Y}^\ast \dirfrq{X}{M} \dirfrq{M}{Y}).
    \end{align*}
     In Figure \ref{fig:path_rule_figure_b} we illustrate this decomposition on the SVAR process specified in Appendix \ref{section: paramterisations}. 
    \begin{figure}
        \centering
        \includegraphics[width=\textwidth]{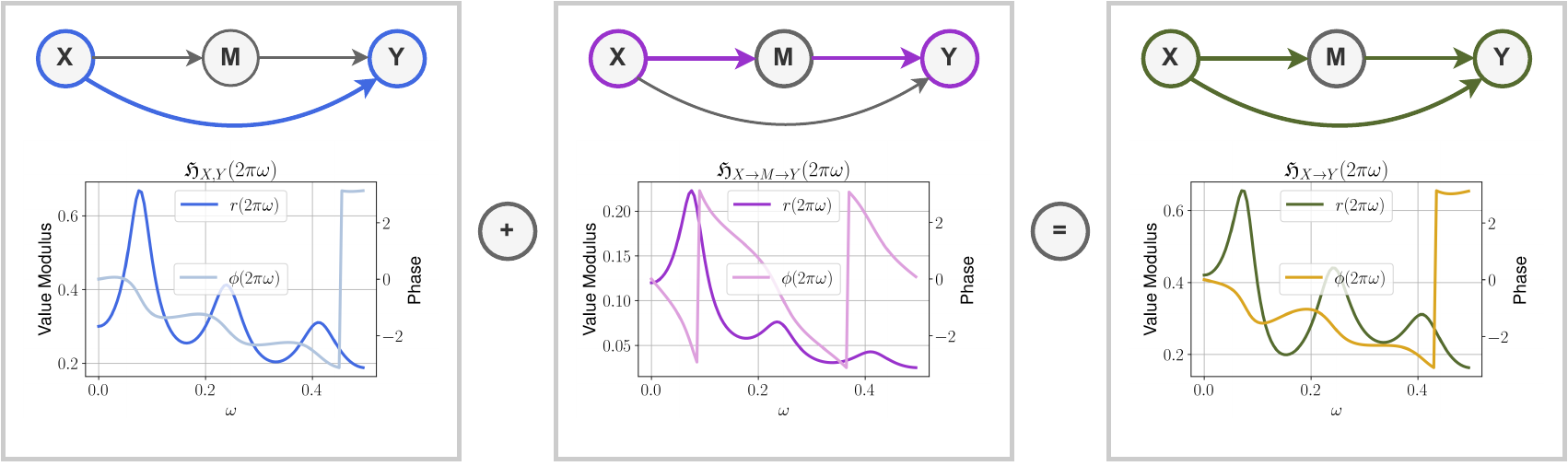}
        \caption{This figure illustrates the path-rule on the process graph $G_b$ for the causal effect $X \to Y$ which is the sum (green) of the transfer-functions associated to the direct (blue) and the indirect effect (violet). The plots follow the same convention as in Figure \ref{fig:path_rule_figure_a}. The specific form of the functions arise from the SVAR process defined in Appendix \ref{section: paramterisations}.}
        \label{fig:path_rule_figure_b}
    \end{figure}
    
    For the process graph $G_c$, we compute the causal effect function for the effect of $Z$ on $Y$. Note that, since there is the cycle $Y \to X \to Y$, a link from $Z$ to $X$ and a link from $Z$ to $Y$, the set of paths $\mathrm{P}_{Z}(Z,Y) = \mathrm{P}(Z,Y)$ is infinite. This set is can be written as the union of sets
    \begin{align*}
        \mathrm{P}(Z,Y) &= \{ Z \to X \to Y \rightleftarrows^{\times l} X \}_{l \geq 0} \\
        &\cup \{ Z \to Y \to X \to Y \rightleftarrows^{\times k} X \}_{k \geq 0}, 
    \end{align*}
    where, for example, $Z \to X \to Y \rightleftarrows^{\times 2}X$ represents the path $Z \to X \to Y \to X \to Y \to X \to Y $. In the following we abbreviate the product $\dirfrq{Y}{X} \dirfrq{X}{Y}$ by $\circfrq{Y}{X}$. Hence, the function capturing the causal effect of $Z$ on $Y$ is
    \begin{align*}
        \ucefrq{Z}{Y} &= \dirfrq{Z}{X}\dirfrq{X}{Y}\sum_{k \geq 0} \circfrq{Y}{X}^k + \dirfrq{Z}{Y}\sum_{l \geq 0} \circfrq{Y}{X}^l\\
        &= \dirfrq{Z}{X}\dirfrq{X}{Y} (1-\circfrq{Y}{X})^{-1} + \dirfrq{Z}{Y}(1- \circfrq{Y}{X})^{-1} \\
        &= \frac{\dirfrq{Z}{X}\dirfrq{X}{Y} + \dirfrq{Z}{Y}}{1- \circfrq{Y}{X}},
    \end{align*}
    if $|\circfrq{Y}{X}(\omega)| < 1$ for all $\omega \in [0, 2\pi)$. Then its squared modulus is 
    \begin{align*}
        |r_{Z \to Y}|^2 &= \frac{|\dirfrq{Z}{X} \dirfrq{X}{Y}|^2 + |\dirfrq{Z}{Y}|^2 + 2\mathrm{Re}(\dirfrq{Z}{X}\dirfrq{X}{Y}\dirfrq{Z}{Y}^\ast)}{|1 - \circfrq{Y}{X}|}. 
    \end{align*}
    In Figure \ref{fig:path_rule_figure_c} we illustrate this transfer function on the SVAR defined in Appendix \ref{section: paramterisations}.
    \begin{figure}
        \centering
        \includegraphics[width=\textwidth]{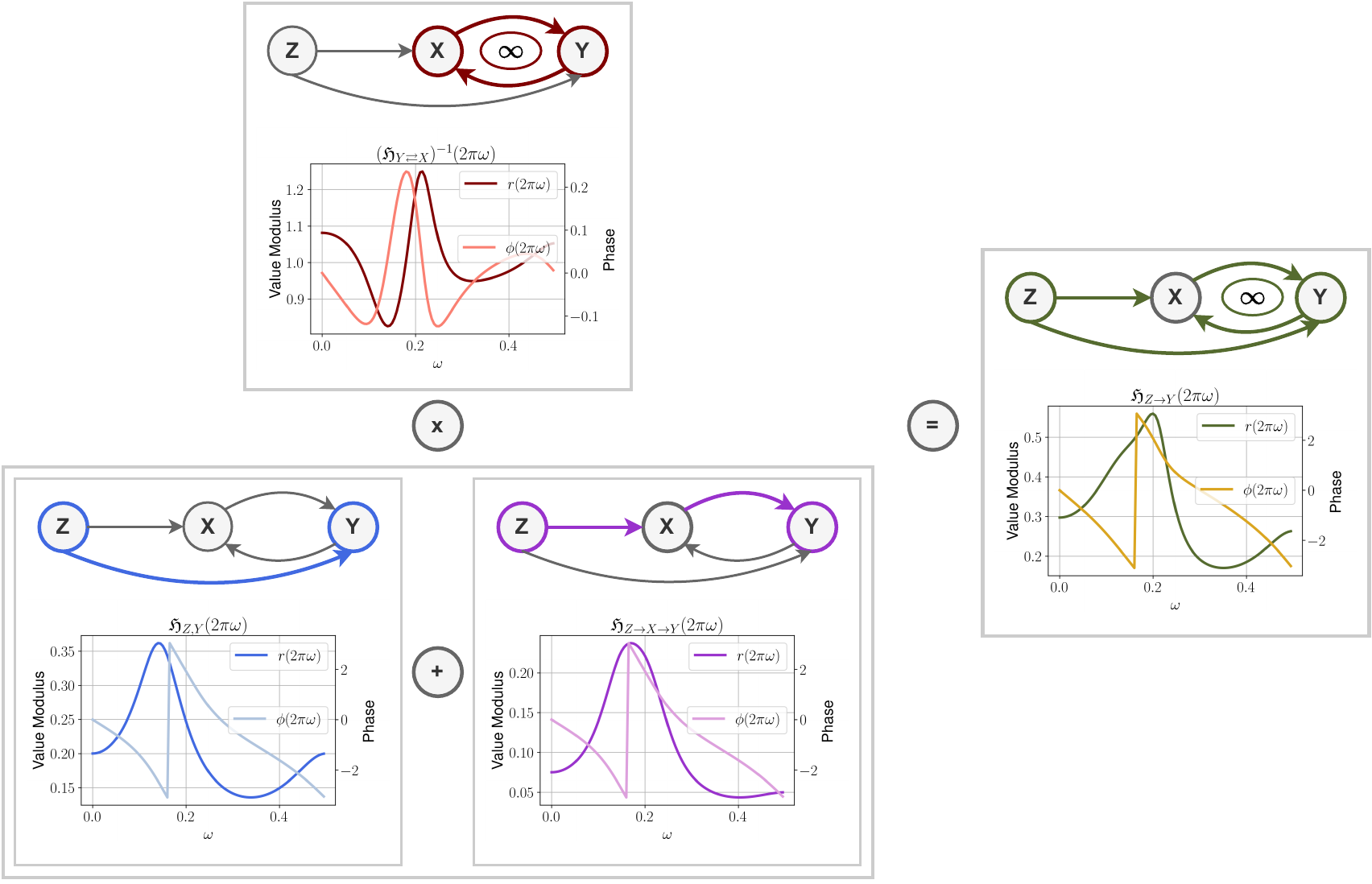}
        \caption{This Figure illustrates the path-rule for the cyclic process graph $G_c$ in Figure \ref{fig:summary_graphs_demo} for the causal effect of $Z$ on $Y$. The transfer-function in the box on the top is $(1- \circfrq{Y}{X})^{-1}$. The transfer-function associated to the causal effect of $Z$ on $Y$ (green) is given by multiplying the sum of the two transfer-functions on the bottom with the one on top. The specific form of the transfer-functions arise from evaluating the expression (\ref{eq: direct transfer function}) on the coefficients of the SVAR processes defined in Appendix \ref{section: paramterisations}.}
        \label{fig:path_rule_figure_c}
    \end{figure}
\end{example}
\subsection{Trek rule in the frequency domain}
Let $\pi$ be a trek on the latent projection $G'$. Then we define the \textit{trek-monomial-function} associated with $\pi$ to be the Fourier-Transformation of the corresponding trek-monomial-filter, i.e., 
\begin{align*}
    \Sp{}^{(\pi)} &\coloneqq \mathcal{F}(\C^{(\pi)}) = \begin{cases}
        \Dirfrq^{(\trekL(\pi))} \Sp{V}^\linternal  (\Dirfrq^{(\trekR(\pi))})^\ast & \text{if $\pi$ is of form (\ref{trek-type I})} \\
        \Dirfrq^{(\trekL(\pi))} \Sp{V_0^L, V_0^R}^\linternal (\Dirfrq^{(\trekR(\pi))})^\ast & \text{if $\pi$ is of form (\ref{trek-type II})}
    \end{cases}.
\end{align*} An immediate implication is that the generalised trek-rule as stated in Proposition \ref{prop: trek-rule processes} has a corresponding formulation in the frequency domain. Namely, the entry in the spectral density corresponding to a pair of process $V,W \in \mathbf{O}$ is given as the sum of trek-monomial-functions
\begin{align}\label{eq: trek-rule frequency domain}
    \Sp{V,W} = \sum_{\pi \in \T{}(V,W)} \Sp{}^{(\pi)}.
\end{align}

In particular, the trek-rule lets us quantify how a process $V$ contributes to the spectral density of another process $W$. The contribution of $V$ to the variability of $W$ is the sum over the trek-monomial-functions of those treks $\pi \in \T{}(W,W)$ for which $V \in \trekL(\pi) \cup \trekR(\pi)$. To this set of treks we will refer as $\T{V}(W,W)$. The part of the spectrum of $W$ to which $V$ contributes, decomposes into a causal $\spcausal{V}{W}$ and a confounding factor $\spconf{V}{W}$. Finally, we represent as $\spresidual{V}{W}$ the sum of trek-monomial-filters of the treks in $\T{}(W,W)\setminus \T{V}(W,W)$. Hence, $\spresidual{V}{W}$ is the part of the spectral density $\Sp{W}$ that is neither directly nor indirectly affected by $V$. These three factors decompose the spectral density of $W$ as
\begin{align} \label{eq: spectral density decomposition}
    \Sp{W} &= \spcausal{V}{W} + \spconf{V}{W} + \spresidual{V}{W}. 
\end{align}

We define the summand $\spcausal{V}{W}$ as the spectral density of the process $\X_{W_V} \coloneqq \ucetemp{V}{W}\ast \X_V$, which is the part of the process $W$ that is causally determined by $V$. Using Lemma \ref{lemma: filtering covariance}, the definition of causal effect functions and the trek rule, we express this spectral density in terms of trek-monomial-functions, i.e.,   
\begin{equation}\label{eq: spectral decomposition causal}
    \begin{split}
        \spcausal{V}{W} &\coloneqq \ucefrq{V}{W} \Sp{V}  \ucefrq{V}{W}^\ast \\
        &= \sum_{\pi_{V\to W} \in \mathrm{P}_V(V,W)} \Dirfrq^{(\pi_{V\to W})} \sum_{\pi_{V\leftrightarrow V \in \T{}(V,V)}} \Sp{}^{(\pi_{V \leftrightarrow V})} \sum_{\pi_{V\to W}' \in \mathrm{P}_V(V,W)} (\Dirfrq^{(\pi_{V\to W}')})^\ast \\
        &= \sum_{\pi \in \T{V \to W} } \Sp{}^{(\pi)},
    \end{split}
\end{equation}
where $\mathcal{T}_{V\to W}$ represents the set of treks $\pi \in \T{}(W,W)$ on which $V \in \trekL(\pi) \cap \trekR(\pi)$. 

The second factor in equation (\ref{eq: spectral density decomposition}), i.e., $\spconf{V}{W}$ indicates the possible confounding of the causal effect $\ucefrq{V}{W}$ and is defined as the sum over all trek-monomial-functions associated with the treks in $\T{V \leftrightarrow W} \coloneqq \T{V}(W,W) \setminus \T{V\to W}$. In Appendix \ref{section: spectrum decomposition} we use the trek-rule in combination with the definition of $\ucefrq{V}{W}$ to arrive at the following expression for the confounding contribution
\begin{align}\label{eq: spectral decomposition conf}
    \spconf{V}{W} &\coloneqq \sum_{\pi \in \T{V\leftrightarrow W}} \Sp{}^{(\pi)}  = 2\mathrm{Re}(\ucefrq{V}{W}\Sp{V,W}) - 2|\ucefrq{V}{W}|^2\Sp{V}.
\end{align}  
\begin{example}[Spectral density decomposition]\label{ex: specdtral-density decomposition} 
In Figure \ref{fig: spectrum decomposition}, we illustrate the spectral density decomposition into causal, confounding and residual contributions on the process graph $G_c$ from Figure \ref{fig:summary_graphs_demo}. 
Since each factor is a sum of trek-monomial-functions and there is no latent confounding (i.e., every trek is of type (\ref{trek-type I}) and therefore has a top node), we can partition each of the sets $\T{V \to W}$, $\T{V \leftrightarrow W}$ and $\T{W \setminus V}$ by grouping the treks by their top node. 
Specifically, we show how the process $X$ contributes to the spectrum of $Y$ by determining its causal contribution $\spcausal{X}{Y}$, see Figure \ref{fig: spectrum decomposition}.a.), the contribution due to confounding $\spconf{X}{Y}$, see Figure \ref{fig: spectrum decomposition}.b.), and the residual $\spresidual{X}{Y}$, Figure \ref{fig: spectrum decomposition}. c.). The plots in Figure \ref{fig: spectrum decomposition} are based on the SVAR process c.) as specified in Appendix \ref{section: paramterisations}. We provide detailed computations of these factors in Appendix \ref{section: spectrum decomposition}. 

\begin{figure}
    \centering
    \includegraphics[width=\textwidth]{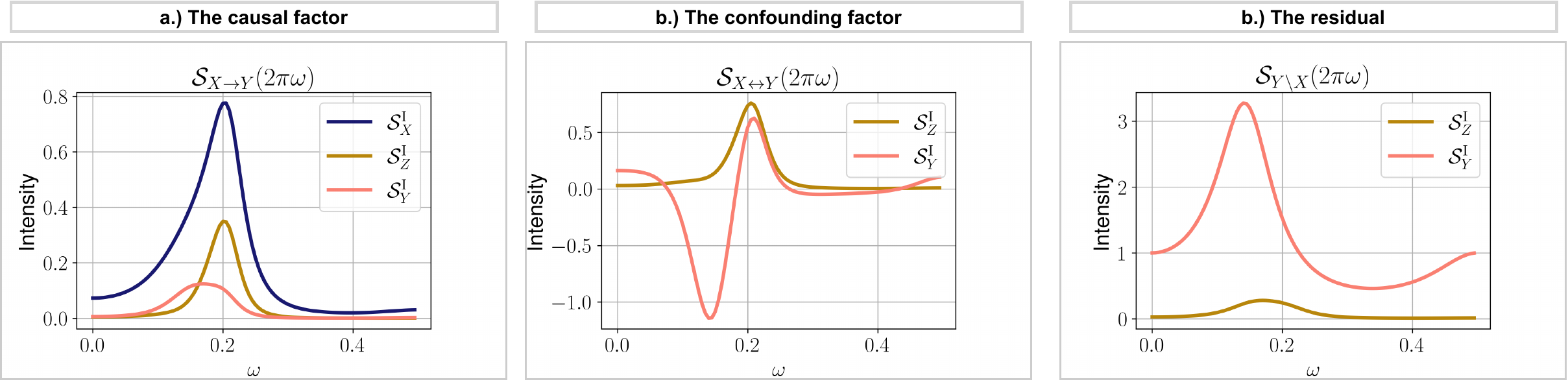}
    \caption{This figure illustrates an example (process graph $G_c$ in Figure \ref{fig:summary_graphs_demo}) of the spectral density decomposition of $Y$ with respect to its ancestor $X$. The underlying SVAR process is specified in Appendix \ref{section: paramterisations}. a.) This figure displays the causal contribution of $X$ to the spectral density of $Y$. The process $X$ is contributing causally to the spectral density of $Y$ by sending its internal dynamics $\X_X^\internal$ along all possible causal paths from $X$ to $Y$ (blue line). Furthermore, it contributes to $\mathcal{S}_Y$ by mediating the internal dynamics of $Z$ (yellow line) and the internal dynamics of $Y$ (salmon colored line). The sum over these components is $\spcausal{X}{Y}$. b.) This figure shows the spectral representation of the confounding between $X$ and $Y$. Both of the processes $Z$ and $Y$ act as confounders. Hence, the contribution due to confounding $X \leftrightarrow Y$ to the spectral density of $Y$ can be expressed in terms of internal dynamics of both $Z$ (yellow line) and $Y$ (salmon colored line). c.) This figure shows the residual spectral density, which is due to the internal dynamics of $Z$ (yellow line) along the direct link from $Z$ to $Y$, and the internal dynamics of $Y$ (salmon colored line).}
    \label{fig: spectrum decomposition}
\end{figure}
\end{example}

\section{Summary and outlook} \label{section: outlook}
In this work, we investigated the causal structure of linear SVAR processes. A SVAR process comes with two graphical objects that encode the dynamic causal structure among its components. The first graphical representation is the infinite time series graph, which qualitatively describes how the present and past states of the SVAR process determine the current state of the process up to noise. The time series graph has two sources of complexity: (1) the complexity arising from the causal structure among the processes and (2) the complexity stemming from the temporal structure, i.e., the time-lags at which these causal effects are realised. If we disregard the temporal complexity of the time series graph, then we obtain the finite and generally less complex process graph, which is the second graphical object linked to an SVAR process.   

\subsection{Summary} In this paper, we provided a formal SCM perspective that equips the process graph with a quantitative model for both the time and frequency domain. The basis of this perspective is a notion of (direct) causal effect objects between processes at the level of the process graph. We formulated these objects both in the time and frequency domain.
With these direct causal effects we established a set of linear equations encoding the relations amongst the processes up to a noise process, which we termed the projected internal dynamics. These equations can be seen as a parameterisation of the process graph by a generalised linear SCM. From this parameterisation, we derived a description of the spectral density of the SVAR process in terms of the transfer-function and the spectral density of the noise processes, i.e., projected internal dynamics. This description revealed that the spectral density of a SVAR process whose latent projected process graph is a mixed graph $G'$, has the same structure as the covariance matrix of a linear Gaussian SEM parameterising the mixed graph $G'$. Consequently, the process graph structures the spectral density in such a way that, depending on the process graph, the transfer-function of the associated SVAR process might be extracted only based on its process graph and spectral density. This extraction of the transfer-function is always possible, for example, if there is no latent confounding. In this case one obtains the transfer-function, frequency-wise, from the observational spectral density along the same lines as one would extract the linear model from the covariance of a fully observed SEM. Therefore, provided that the process graph is known and that the transfer-function for that graph is identifiable, there are two ways to estimate the transfer-function: (1) one can estimate the transfer-function directly from the (estimated) spectral density or (2) one can first estimate the full SVAR parameters and then use Lemma \ref{lemma: analytical expression transfer-function} to compute the transfer function. A comparison of these two approaches in terms of their statistical performance seems to be an interesting direction for future work. 

\subsection{Outlook} The formal perspective on the causal structure of the process graph suggests several interesting avenues for future research.  

\emph{Identification and estimation of causal effects}. 
One problem might be to characterise how much information is lost after disregarding the temporal structure. Specifically, one might try to determine which causal effects on the process graph can be identified from observations knowing the time series graph but cannot be identified when all we know is the process graph. This identifiability loss might occur, for example, when an acyclic time series graph collapses into a cyclic process graph. In such cases, it seems unlikely that one can generally avoid to exploit information on the lag-structure if the objective is to recover the direct effect-functions of links that participate in a cycle on the process graph.

A typical problem in Causal Inference is to estimate causal effects, given that one has at least partial knowledge of the causal graph. In SCM’s one can estimate these effects with so-called adjustment sets \cite{perkovic2018complete, pearl2009causality, shipster2010adjustment, smucler2021efficientadjustment, maathius2009interventioneffects}. The problem of graphically characterising efficient or optimal adjustment sets has been studied in e.g. \cite{henckel2021GraphicalTools, runge2021optimal, smucler2021efficientadjustment, henckel2022totaleffect}. A possible target for future work could be to follow up the work on efficient causal effect estimation at the level of the process graph \cite{reiter2024asymptotic}, where we identify the asymptotically optimal estimator for frequency domain causal effects under the assumption that no latent processes are present and that the process graph is known. 

\emph{Causal discovery of the process graph}. Recently, there have been several developments towards formalising causal discovery of the time series graph \cite{eichler2010graphical, dahlhaus2003causality, eichler2012graphical, runge2019detecting, runge2019inferring, gerhardus2020high, gerhardus2021characterization}. These concepts and methods aim at discovering the Markov equivalence class of the full time series graph. The framework in which they are phrased is that of SCMs. 
For the case of SVAR processes, our contribution sets grounds on which the discovery of the process graph can be formalised in the framework of SEP’s - a generalised version of linear SCM’s. If the process graph is a directed acyclic graph, then each $d$-separation statement about the process graph corresponds to a rank condition on the spectral density matrix, as we show in \cite{reiter2024causal}. This means that the Markov equivalence class of the process graph of a SVAR processes can be identified in two ways: (1) We could first discover the time series graph and then reduce it to its process graph (note that this graph can contain more information than the Markov equivalence class of the process graph). (2) We could use the SEP approach to discover the process graph directly. This poses two related question. First, is the estimation of the Markov equivalence class of the process graph along procedure (2) more reliable than discovery along procedure (1)? Second, if that turned out to be the case, can we use the information about the process graph to improve the result of discovering the full time series graph? 

\emph{Towards applications.} We believe that the formalisation of causal inference on process graphs for SVAR processes is also  relevant and useful in applications where one is often interested in systems of quantities that interact over time. Observations of such systems are given as time series, and from these data we wish to infer how strongly these quantities drive each other. In practice, we may already have qualitative knowledge of which quantity causally precedes which. However, we may not know the time lags after which these effects take place, nor may we be willing to make assumptions about them. Using the SEP representation of SVAR processes, one could ignore these lags and instead use only the process graph to compute causal effects. In particular, this representation avoids possible misspecification of the generally more complex time series graph. On the contrary, the causal effects introduced in this paper are much more complex than causal effects in the usual sense, and estimating them is likely to involve numerous statistical challenges. Future research should therefore identify and address these challenges. In \cite{reiter2024asymptotic} we take first steps in this direction.   
However, even if the time series graph is known or has been identified by a causal discovery algorithm, it may still be of interest to apply our findings, as the frequency domain causal effects as well as the spectral density decomposition allow for compact and informative representations of the causal structure in SVAR processes, as we demonstrate in \cite{reiter2024asymptotic}. 

\section*{Acknowledgments}
J.W. and J.R. received funding from the European Research Council (ERC) Starting Grant CausalEarth under the European Union’s Horizon 2020 research and innovation program (Grant Agreement No. 948112).
N.R., A.G. and J.R.  received funding from the European Union’s Horizon 2020 research and innovation programme under Marie Skłodowska-Curie grant agreement No 860100 (IMIRACLI).



\bibliographystyle{plain}
\bibliography{refs.bib}

\newpage

\appendix
\section{A commutative diagram on the relation between the SEP representations and how they generalise the SEM parameterisation} \label{sec: commutative diagram}
In this section, we provide a summary of the SEP representations constructed in this paper and relate them back to the parameterisation of SEMs. Specifically, we define the spaces in which these parameterisations "live" and the maps connecting them so that we can organise our constructions in a commutative diagram, see Figure \ref{fig: summary main results}. This perspective could become helpful when formalising algebraic identifiability theory for the process graph of SVAR processes. In the following, we fix a latent component process graph $G=(\mathbf{O} \cup \mathbf{L}, D)$ over $|\mathbf{O}| =m$ observed and $|\mathbf{L}|=d$ latent processes. We denote by $G'=(\mathbf{O}, D^\mathbf{O}, B)$ the latent projection of $G$.
\begin{figure}
    \centering
    \resizebox{\textwidth}{!}{
    \begin{tikzpicture}[sp_node/.style={rectangle, minimum size=1cm}]
        
        \node[sp_node] (SVAR) at (0,0) {$\SVAR(G)$};
        \node[sp_node] (SEP-time-par) [below=0.6cm of SVAR] {$\ell^1_\reg(\R^{D^\mathbf{O}}) \times \ell^\infty_{\PD(B)}(\Z, m)$};
        \node[sp_node] (SEM-par) [left=of SEP-time-par] {$\R^{D^\mathbf{O}}_{\reg} \times \PD(B)$};
        \node[sp_node] (SEP-frq-par) [right=of SEP-time-par] {$\Rfnc(\Cplx^{D^\mathbf{O}}_\reg) \times \Rfnc(\PD(B))$}; 

        \node[sp_node] (SEM-cov) [below=0.6cm of SEM-par] {$\PD_m$};
        \node[sp_node] (SEP-time-cov) [below=0.6cm of SEP-time-par]{$\ell^\infty_\PD(\Z, m)$};
        \node[sp_node] (SEP-frq-cov) [below=0.6cm of SEP-frq-par]{$\Rfnc(\PD_m)$};

        \node[sp_node] (SEM-title) [above=1.5cm of SEM-par] {\textbf{SEM}};
        \node[sp_node] (SEP-time-title) [above=1.5cm of SEP-time-par] {\makecell{\textbf{SEP} \\ \textbf{(time-domain)}}};
        \node[sp_node] (SEP-frq-title) [above=1.5cm of SEP-frq-par] {\makecell{\textbf{SEP} \\ \textbf{(frequency domain)}}};

        \draw[->](SVAR) -- node[left] {Theorem 1} (SEP-time-par);
        \draw[->](SVAR) -- node[above right] {$\mathcal{F}$(Theorem 1)} (SEP-frq-par);

        \draw[->](SEM-par) -- node[above] {$(\cdot \iota, \cdot\iota)$} (SEP-time-par);
        \draw[->](SEP-time-par) -- node[above] {$(\mathcal{F}, \mathcal{F})$} (SEP-frq-par);

        \draw[->](SEM-par) -- node[left] {$\Sigma$} (SEM-cov);
        \draw[->](SEP-time-par) -- node[left] {$\C_\mathbf{O}$} (SEP-time-cov);
        \draw[->](SEP-frq-par) -- node[left] {$\Sp{\mathbf{O}}$} (SEP-frq-cov);

        \draw[->](SEM-cov) -- node[above] {$\cdot \iota$} (SEP-time-cov);
        \draw[->](SEP-time-cov) -- node[above] {$\mathcal{F}$} (SEP-frq-cov);

        \draw[->](SEM-title) -- node[above] {White noise} (SEP-time-title);
        \draw[->](SEP-time-title) -- node[above] {Fourier} (SEP-frq-title);
    \end{tikzpicture}}
    \caption{This figure shows how the Fourier-Transformation (as represented by the two horizontal arrows on the right) connects the time and frequency SEP representations of a SVAR process $(\Phi, \mathbf{w})$, and how the SEP parameterisations generalise the SEM parameterisation. The top most vertical arrow employs Theorem 1 of the main paper to associate $(\Phi, \mathbf{w})$ with its time domain SEP representation $(\Dirtmp, \C^\linternal_\mathbf{O})$. The diagonal arrow on the top maps the pair $(\Phi, \mathbf{w})$ to the Fourier transformed SEP representation, represented as $(\Dirfrq, \Sp{\mathbf{O}}^\linternal)$ in the main text. The vertical arrows in the middle and on the right, connecting the second and third row of this diagram, indicate the computation of the ACS (middle) and the spectral density (right) in terms of the respective SEP representation. To see that the square involving these computations commutes, recall that the Fourier-Transformation takes convolution to point-wise matrix multiplication and tilted convolution to point-wise multiplication by the complex conjugate. The square on the left of this diagram shows the Gaussian white noise embedding of SEMs parameterising $G'$.}
    \label{fig: summary main results}
\end{figure}
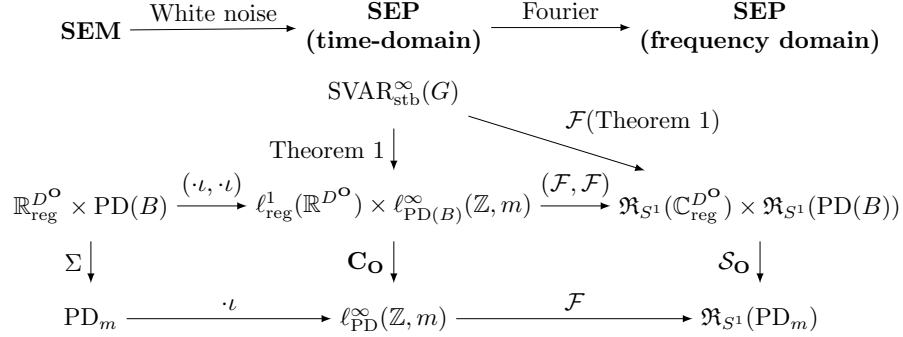 

Recall from \cite{10.1214/22-AOS2221, foygel2012half} that $\R^{D^\mathbf{O}}_\reg$ denotes the space of all $m\times m$-dimensional matrices $A$ such that $A_{V,W} = 0$ if $V \not \to W$ for the latent projection $G'$. Furthermore, we denote by $\PD_m$ the set of all positive definite symmetric (hermitian if considered over the complex numbers) $m\times m$ matrices and by $\PD(B)$ the set of matrices $\Omega=(\Omega_{V,W})_{V,W \in \mathbf{O}}\in \PD_m$ such that $\Omega_{V,W}= 0$ if $v \neq w$ and $v \not \leftrightarrow w$. A SEM parameterising the mixed graph $G'$ is given by a tuple of matrices $(A, \Omega) \in \R^D_\reg \times \PD(B)$ and has as observational covariance $\Sigma = (I-A)^{-\top} \Omega (I- A)^{-1}$. The SEM parameterisation of $G'$ is depicted in the first column of Figure \ref{fig: summary main results}.  

We now generalise these notations to sequence and function spaces in which the time resp. frequency domain SEP representations of SVAR processes that are consistent with $G$ are defined. Let us therefore denote by $\SVAR(G)$ the set of all parameter pairs $(\Phi, \mathbf{w})$ that satisfy the assumptions of Theorem 1 in the main paper, and by $\SVARs(G)$ the set of parameter pairs $(\Phi, \mathbf{w})$ that are consistent with $G$ and that satisfy
\begin{align}\label{eq: absolute stability condition}
    \sum_{V,W \in \mathbf{V}} \sum_{k = 0}^p |\phi_{V,W}(k)| & <1.
\end{align} 
In Proposition \ref{prop: supplement sep existence condition} we show that $\SVARs(G) \subset \SVAR(G)$.

\subsection{The sequence space of time domain SEP representations}
Let us denote by $\ell_\reg^{1}(\Z, \R^{D^\mathbf{O}})$ the space of all $m \times m$-dimensional absolutely summable filters $\Lambda$ such that $\Lambda(s) \in \R^{D^\mathbf{O}}$ for every $s \in \Z$, and $\Lambda^\infty$ exists and is entry-wise absolutely summable. We represent as $\ell^\infty_\PD(\Z, m)$ the space of bounded sequences $\C=(\C(\tau))_{\tau \in \Z}\subset \R^{m \times m }$ such that for every integer $N\geq1$ the $m\cdot N \times m \cdot N $ block-matrix $(\C(i-j))_{1 \leq i,j \leq N} \in \PD_{m \cdot N}$. Finally, we denote by $\ell^1_{\PD(B)}(\Z, m) \subset \ell^1_\PD(\Z, m)$ the subset consisting of sequences $\C= (\C(\tau))_{\tau \in \Z}$ such that for all $\tau \in \Z$ the entry $\C_{V,W}(\tau) = 0$ if $V \not \leftrightarrow W$. With these notations we summarise Theorem 1 of the main paper as the composition of maps
\begin{equation}\label{eq: sep parameterisation map}
     \SVAR(G) \to \ell^1_\reg(\Z, \R^{D^\mathbf{O}}) \times \ell^\infty_{\PD(B)}(\Z, m) \to \ell^\infty_{\PD}(\Z, m), 
\end{equation}
where the first arrow maps $(\Phi, \mathbf{w})$ to its time domain SEP representation $(\Dirtmp, \C_\internal^\mathbf{O})$ and the second denotes the computation of the ACS $\C_\mathbf{O}$. This is the second column in the commutative diagram of Figure \ref{fig: summary main results}.

\subsection{The rational function space of frequency domain SEP representations}
Let us denote by $\Rfnc$ the space consisting of all functions $f: D^1 \subset \Cplx \to \Cplx$ such that $f(z) =\frac{p(z)}{q(z)}$ for every $z\in S^1\subset \Cplx$, where $p,q$ are polynomials with real coefficients. Accordingly, we denote by $\Rfnc(\Cplx^{D^\mathbf{O}}_\reg)$ the set of all functions $f=(f_{V,W})_{V,W \in \mathbf{O}}: D^1 \to \Cplx^{m \times m}$ such that $f_{V,W} \in \Rfnc$ for every $V,W\in \mathbf{O}$ and that $f(z) \in \Cplx^{D^\mathbf{O}_\reg}$ for all $z\in D^1$. Similarly, we define $\Rfnc(\PD_m)$ to be the set of functions $f=(f_{V,W})_{V,W\in \mathbf{O}}:S^1 \to \Cplx^{m\times m}$ where $f_{V,W} \in \Rfnc$ and $f(z) \in \PD(m)$ for all $z\in S^1$. Eventually, we represent by $\Rfnc(\PD(B)) \subset \Rfnc(\PD_m)$ the set of matrix-valued functions $f$ such that $f(z)\in \PD(B)$ for all $z \in S^1$. Application of the Fourier-Transformation to (\ref{eq: sep parameterisation map}) together with Lemma 1 of the main paper and the observation that $z^\ast = z^{-1}$ for every $z\in S^1$ gives the third column of the diagram depicted in Figure \ref{fig: summary main results}.

\subsection{The Gaussian white noise embedding}
Note that any SEM $(A,\Omega) \in \R^{D^\mathbf{O}}_\reg \times \PD(B)$ paramterising the mixed graph $G'$ defines a tuple $(A \cdot \iota, B\cdot \iota )\in \ell^1_\reg(\Z, \R^{D^\mathbf{O}}) \times \ell^\infty_{\PD(B)}(\Z, m)$ where $A \cdot \iota $ is the $\Z$-indexd filter that has the matrix $A$ in its zeroth entry and the zero matrix in all other entries. We define the filter $\Omega \cdot \iota$ in an analogous manner. Note that $(A \cdot \iota, \Omega \cdot \iota)$ is the SEP representation of a SVAR white noise process, i.e., a SVAR process parameterising a time series graph with only contemporaneous links. This representation of SEMs is denoted by the horizontal arrows connecting the first and second column in the diagram of Figure \ref{fig: summary main results}. 

Finally, observe that the frequency domain SEP representation of the SEM $(A, \Omega)$ is the tuple of constant functions $(A \cdot \mathcal{I}, \Omega \cdot \mathcal{I}) \in \Rfnc(\Cplx^{D^\mathbf{O}}_\reg) \times \Rfnc(\PD(B))$, where $A \cdot \mathcal{I}(z) = A$ resp. $\Omega \cdot \mathcal{I}(z) = \Omega$ for every $z \in D^1$. 

\section{Stable SVAR processes}\label{section: stable VAR processes}
In this section, we recall some notions about Gaussian Processes and stable SVAR processes in order to facilitate the proof of theorem 1 of the main paper. 
\begin{definition}[Gaussian Process]
    Let $\X_\mathbf{V} = (\X_V(t))_{t \in \Z, V \in \mathbf{V}}$ be a $m=|\mathbf{V}|$-dimensional stochastic process. It is called a \textit{Gaussian Process} if for any finite sub-collection $\{(V_i,t_i) \}_{1\leq i \leq n} \subset \mathbf{V} \times \Z$ the random vector $(\X_{V_i}(t_i))_{1 \leq i \leq n}$ follows a multi-variate normal distribution. If furthermore, for all $\tau \in \Z$ one has that $(\X_{V_i}(t_i + \tau))_{1 \leq i \leq n}$ follow the same distribution, then the Gaussian Process $\X_\mathbf{V}$ is called stationary.
\end{definition}
A Gaussian Process $\X_\mathbf{V}$ is entirely specified by its mean sequence $\mu_\mathbf{V} =(\Ep[\X_\mathbf{V}(t)])_{t \in \Z}$ and its ACS $\C_\mathbf{V}(t,\tau) = (\Ep[(\X_\mathbf{V}(t) - \mu_\mathbf{V}(t)) (\X_\mathbf{V}(t-\tau)- \mu_\mathbf{V}(t-\tau))^\top])_{t, \tau \in \Z}$. If $\X_\mathbf{V}$ is stationary, then both its mean-sequence and ACS are independent of $t$. 

A SVAR process $\X_\mathbf{V}$ specified by $(\Phi, \mathbf{w})$ is called \textit{stable} if its \textit{reverse characteristic polynomial} \cite[Chapter 2]{lutkepohl2005new}
\begin{align}\label{condition: stability I}
    \mathrm{det}(I - z (I -\Phi(0)^{-1})\Phi(1)- \cdots -z^p(I - \Phi(0)^{-1})\Phi(p))
\end{align}
is non-zero on the complex unit disk, i.e., for every $z \in \mathbb{C}$ s.t. $|z| \leq 1$. 

Suppose $\X_\mathbf{V} = (\X_V)_{V \in \mathbf{V}}$ is a stable $m = |\mathbf{V}|$-dimensional SVAR process specified by $(\Phi, \mathbf{w})$. Then by \cite{lutkepohl2005new}[Chapter 2] or \cite{brockwell2009time}[Chapter 11] there is an entry-wise absolutely summable $m \times m $-dimensional filter $\Psi = (\psi_{V,W})_{V,W \in \mathbf{V}}$ such that
\begin{align*}
    \X_\mathbf{V} &= \Psi^\top \ast \eta',
\end{align*}
where $\eta' = ((I- \Phi(0)^{-\top}) \eta(t))_{t \in \Z} = (\eta'_{V}(t))_{V \in \mathbf{V}, t \in \Z}$ is a Gaussian white noise process, i.e., any finite collection $(\eta_{V_1}(t_1), \cdots, \eta_{V_m}(t_m))$ follows a joint Gaussian distribution, such that
\begin{align*}
    \Ep[\eta'_{V_i}(t)\eta'_{V_j}(s) ] &= \begin{cases}
        [(I- \Phi(0))^{-\top} \mathbf{w} (I - \Phi(0))^{-1}]_{V_i,V_j} & \text{if $s = t$} \\
        0 & \text{otherwise}
    \end{cases}
\end{align*}
Recall that $\mathbf{w}$ is a diagonal matrix, meaning that the Gaussian white noise process $\eta = (\eta_V)_{V\in \mathbf{V}}$ is both serially and mutually independent.

The elements of the multivariate filter $\Psi$ are defined recursively as 
\begin{align*}
    \Psi(k)= \begin{cases}
        \mathbf{0}& \text{if $k < 0$} \\
        I & \text{ if $k=0$} \\
        \Phi'(k) + \sum_{l= 1}^{k-1}  \Psi(k-l) \Phi'(l)& \text{if $1 \leq k \leq p$} \\
        \sum_{l= 1}^{p}  \Psi(k-l) \Phi'(l) & \text{otherwise}
    \end{cases},
\end{align*}
where $\Phi'(l) = \Phi(l)(I - \Phi(0))^{-1}$ and $1 \leq l \leq p$. Then $\X_\mathbf{V}$ is a stationary multivariate Gaussian Process, specified by the mean resp. covariance function 
\begin{align*}
    \mu_\mathbf{V}(t) &= \sum_{k \geq 0} \Psi(k)^\top \mu' \\
    \C_\mathbf{V}(\tau) &= \sum_{k \geq 0} \Psi(k + \tau)^\top \mathbf{w}' \Psi(k). 
\end{align*}

To aid the proof of the main theorem of this work, we recall some notions about convergence of sequences of filters and filtered Gaussian Processes. In the following we denote by $\ell^1(\Z, \mathbb{R}^{m \times n})$ the space of absolutely summable $m \times n$-dimensional filters. We just write $\ell^1$ if the specification of the dimension is clear from the context. Furthermore, if $A \in \mathbb{R}^{m \times n}$, then we set 
\begin{align}
    \lVert A \rVert \coloneqq (\mathrm{tr}(A A^\top))^{\frac{1}{2}}
\end{align}
to be the Hilbert-Schmitt norm of $A$.
If $\Lambda= (\Lambda(j))_{j \in \Z}$ is an $m \times n$-dimensional filter, we also consider the norms
\begin{align*}
    \lVert \Lambda \rVert_1  & \coloneqq \sum_{j \in \Z} \lVert \Lambda(j) \rVert \\
    \lVert \Lambda \rVert_\infty & \coloneqq \sup_{j \in \Z} \lVert \Lambda(j) \rVert. 
\end{align*}
A filter $\Lambda$ is entry-wise absolutely summable if and only if $\lVert \Lambda \rVert_1 < \infty$. A sequence of $\ell^1$-filters $(\Lambda^{(n)})_{n \geq 0}$ is said to converge to a filter $\Lambda \in \ell^1$ in $\ell^1$ if 
\begin{align*}
    \lim_{n \to \infty} \lVert \Lambda^{(n)} - \Lambda \rVert_1 &= 0.
\end{align*}
In our context, we are mainly interested in filters $\Lambda=(\Lambda(j))_{j \in Z}$ that have non-zero elements only at non-negative indices. We therefore denote by $\ell^1_{\geq 0}$ the subspace of $\ell^1$ consisting of those filters that are non-zero only at non-negative indices. 

Furthermore, let $(\X_\mathbf{V}^{(n)})_{n\geq 1} = ((\X^{(n)}_V)_{V \in \mathbf{V}})_{n\geq 0}$ be a sequence of Gaussian Processes. Then the sequence $(\X_\mathbf{V}^{(n)})_{n\geq 1}$ is said to converge in distribution to a Gaussian Process $\X_\mathbf{V}$ if for any finite collection $\{(V_i,t_i) \}_{1 \leq i \leq q} \subset \mathbf{V} \times \Z$ the sequence of random vectors $((\X^{(n)}_{V_i}(t_i))_{1\leq i\leq q})_{n\geq 1}$ converges in distribution to the random vector $(\X_{V_i}(t_i))_{1\leq i \leq q}$. It follows from Levy's convergence theorem, see e.g. \cite{williams_1991}, that $(\X_\mathbf{V}^{(n)})_{n \geq 1}$ converges to $\X_\mathbf{V}$ in distribution if the sequences $(\mu_\mathbf{V}^{(n)})_{n\geq 1}$ and $(\C_\mathbf{V}^{(n)}(t, \tau))_{n \geq 1}$ converge point-wise to $\mu_\mathbf{V}$ resp. $\C_\mathbf{V}$.

\begin{lemma}\label{lemma: supplement continuity convolution}
    Let $(\Lambda^{(n)})_{n \geq 1}$ be a sequence of filters in $\ell^1_{\geq 0}(\Z, \mathbb{R}^{m \times n})$, such that they converge to $\Lambda \in \ell^1_{\geq 0}(\Z, \mathbb{R}^{m \times n})$ in $\ell^1$. Suppose $\Psi \in \ell^1_{\geq 0}(\Z, \mathbb{R}^{n \times q})$, then $(\Lambda^{(n)}\ast \Psi)_{n \geq 1}$ converges to $\Lambda \ast \Psi$ in $\ell^1$. That means, convolution by $\Psi$ defines a continuous operator on $\ell^1$. 
\end{lemma}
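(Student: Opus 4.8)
The plan is to exploit the bilinearity of convolution together with submultiplicativity of the $\ell^1$-norm. Since convolution is linear in its left argument, one has $\Lambda^{(n)} \ast \Psi - \Lambda \ast \Psi = (\Lambda^{(n)} - \Lambda) \ast \Psi$, so it suffices to show that $\lVert (\Lambda^{(n)} - \Lambda) \ast \Psi \rVert_1 \to 0$ as $n \to \infty$.

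The key step is the submultiplicative estimate $\lVert A \ast B \rVert_1 \leq \lVert A \rVert_1 \, \lVert B \rVert_1$ for any $A \in \ell^1(\Z, \R^{m \times n})$ and $B \in \ell^1(\Z, \R^{n \times q})$. To establish this, I would start from the defining formula $(A \ast B)(u) = \sum_{t \in \Z} A(t) B(u-t)$, apply the triangle inequality for the Hilbert–Schmidt norm, and then use that this norm is submultiplicative, i.e. $\lVert A(t) B(u-t) \rVert \leq \lVert A(t) \rVert \, \lVert B(u-t) \rVert$ (which follows from $\lVert X Y \rVert \leq \lVert X \rVert \, \lVert Y \rVert$ for the Hilbert–Schmidt norm, via $\lVert X Y \rVert \leq \lVert X \rVert_{\mathrm{op}} \lVert Y \rVert \leq \lVert X \rVert \, \lVert Y \rVert$). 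Summing over $u$ and interchanging the order of summation — justified by Tonelli's theorem, as all summands are non-negative — gives $\sum_u \lVert (A \ast B)(u) \rVert \leq \sum_t \lVert A(t) \rVert \sum_u \lVert B(u-t) \rVert = \lVert A \rVert_1 \, \lVert B \rVert_1$, after reindexing the inner sum by $u \mapsto u - t$.

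With this estimate in hand, the conclusion is immediate: taking $A = \Lambda^{(n)} - \Lambda$ and $B = \Psi$ yields $\lVert (\Lambda^{(n)} - \Lambda) \ast \Psi \rVert_1 \leq \lVert \Lambda^{(n)} - \Lambda \rVert_1 \, \lVert \Psi \rVert_1$. Since $\lVert \Psi \rVert_1 < \infty$ by hypothesis and $\lVert \Lambda^{(n)} - \Lambda \rVert_1 \to 0$ by assumption, the right-hand side tends to zero. Moreover, the convolution of two filters supported on non-negative indices is again supported on non-negative indices, and the submultiplicativity bound shows it is absolutely summable, so $\Lambda^{(n)} \ast \Psi$ and $\Lambda \ast \Psi$ indeed lie in $\ell^1_{\geq 0}(\Z, \R^{m \times q})$; the displayed inequality then exhibits right-convolution by $\Psi$ as a bounded, hence continuous, linear operator on $\ell^1$.

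I expect the only genuine subtlety to be the justification of the interchange of summation order in the submultiplicativity estimate; since the terms are non-negative reals this is routine via Tonelli, but it is the one place where care is needed. Everything else is a direct consequence of the triangle inequality and the submultiplicativity of the Hilbert–Schmidt norm.
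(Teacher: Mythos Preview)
Your proposal is correct and follows essentially the same approach as the paper: both arguments use bilinearity of convolution, the triangle inequality, submultiplicativity of the Hilbert--Schmidt norm, and an interchange of summation to arrive at the bound $\lVert (\Lambda^{(n)} - \Lambda) \ast \Psi \rVert_1 \leq \lVert \Lambda^{(n)} - \Lambda \rVert_1 \, \lVert \Psi \rVert_1$. The only cosmetic difference is that you first isolate the general submultiplicativity estimate $\lVert A \ast B \rVert_1 \leq \lVert A \rVert_1 \lVert B \rVert_1$ and then apply it, whereas the paper carries out the same chain of inequalities directly on $\Lambda - \Lambda^{(n)}$.
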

\begin{proof}
     The claim follows from the following computation 
    \begin{align*}
        \sum_{j \in \Z} \lVert(\Lambda \ast \Psi)(j)- (\Lambda^{(n)} \ast \Psi(j))\rVert &= \sum _{j \geq 0} \lVert\sum_{l = 0}^j \Lambda(l) \Psi(j-l) - \Lambda^{(n)}(l)\Psi(j-l) \rVert \\
        & \leq \sum_{j \geq 0} \sum_{l =0}^j\lVert \Lambda(l) \Psi(j-l) - \Lambda^{(n)}(l) \Psi(j-l) \rVert \\
        & \leq \sum_{j \geq 0}\sum_{l \geq  0} \lVert \Lambda(l) - \Lambda^{(n)}(l) \rVert \lVert \Psi(j-l) \rVert \\
        & = (\sum_{l \in \Z} \lVert \Lambda(l) - \Lambda^{(n)}(l) \rVert ) (\sum_{j \in \Z} \lVert \Psi(j) \rVert).  
    \end{align*}
    The expression on the last line goes to zero as $(\Lambda^{(n)})$ converges to $\Lambda$ in $\ell^1$ and $\Psi$ is absolutely summable. This shows the claim.  
\end{proof}
\begin{lemma}\label{lemma: supplement GP convergence}
    Let $\Psi = (\psi_{V,W})_{V, W \in \mathbf{V}} \in \ell^{1}_{\geq 0}$ be a matrix-valued filter. Suppose that $\eta = (\eta_V(t))_{V\in \mathbf{V}, t \in \Z}$ is a multivariate Gaussian white noise process whose instantaneous covariance structure shall be denoted by $\mathbf{w} = \Ep[\eta(t)^\top \eta(t)]$. Let furthermore $(\Psi^{(n)})_{n \geq 1}$ be a sequence in $\ell^1_{\geq 0}$ filters having the same dimension as $\Psi$ and such that $\Psi^{(n)}$ converges to $\Psi$ in $\ell^1$. Then the sequence of Gaussian Processes $((\Psi^{(n)})^\top \ast \eta)_{n \geq 1}$ converges in distribution to the Gaussian Process $\Psi^\top \ast \eta$.  
\end{lemma}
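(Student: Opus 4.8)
The plan is to reduce the statement to the pointwise convergence of the first two moments of the Gaussian Processes involved and then to invoke Lévy's convergence theorem, exactly as recalled in the paragraph preceding Lemma~\ref{lemma: supplement continuity convolution}. Concretely, each of $(\Psi^{(n)})^\top \ast \eta$ and $\Psi^\top \ast \eta$ is a stationary Gaussian Process, so its finite-dimensional distributions are fully determined by its mean sequence and its ACS. It therefore suffices to show that the mean sequences and the auto-covariance sequences of $(\Psi^{(n)})^\top \ast \eta$ converge pointwise to those of $\Psi^\top \ast \eta$, since pointwise convergence of these moments yields convergence in distribution of every finite collection of variables, hence convergence in distribution of the processes.

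The mean step is immediate: $\eta$ is a zero-mean white noise process, so all processes in question have identically vanishing mean sequences. For the covariance step I would first record the closed form of the ACS of a filtered white noise process. Using that $\Ep[\eta(t-k)\eta(t-\tau-l)^\top] = \mathbf{w}$ when $k = \tau + l$ and vanishes otherwise, one obtains
\[
  \C^{(n)}(\tau) = \sum_{k \geq 0} \Psi^{(n)}(k+\tau)^\top\, \mathbf{w}\, \Psi^{(n)}(k), \qquad \C(\tau) = \sum_{k \geq 0} \Psi(k+\tau)^\top\, \mathbf{w}\, \Psi(k),
\]
for every $\tau \in \Z$, where $\C^{(n)}$ and $\C$ denote the ACS of $(\Psi^{(n)})^\top \ast \eta$ and of $\Psi^\top \ast \eta$, respectively.

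The core of the argument is a telescoping estimate of $\C^{(n)}(\tau) - \C(\tau)$. I would add and subtract the mixed term $\Psi^{(n)}(k+\tau)^\top \mathbf{w}\, \Psi(k)$ inside the sum, rewriting the difference as $\sum_k \Psi^{(n)}(k+\tau)^\top \mathbf{w}\,(\Psi^{(n)}(k) - \Psi(k)) + \sum_k (\Psi^{(n)}(k+\tau) - \Psi(k+\tau))^\top \mathbf{w}\, \Psi(k)$. Bounding each summand in the Hilbert–Schmidt norm by submultiplicativity and factoring out $\lVert \mathbf{w} \rVert$, the two sums are controlled by $\lVert \mathbf{w} \rVert\, \lVert \Psi^{(n)} \rVert_\infty\, \lVert \Psi^{(n)} - \Psi \rVert_1$ and $\lVert \mathbf{w} \rVert\, \lVert \Psi \rVert_\infty\, \lVert \Psi^{(n)} - \Psi \rVert_1$; here a reindexing $j = k+\tau$ shows that the shifted sum $\sum_{k \geq 0} \lVert \Psi^{(n)}(k+\tau) - \Psi(k+\tau) \rVert$ is still dominated by $\lVert \Psi^{(n)} - \Psi \rVert_1$. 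Since $\Psi^{(n)} \to \Psi$ in $\ell^1$, the sequence $\lVert \Psi^{(n)} \rVert_1$ (and hence $\lVert \Psi^{(n)} \rVert_\infty$) is bounded, while $\lVert \Psi^{(n)} - \Psi \rVert_1 \to 0$; thus $\C^{(n)}(\tau) \to \C(\tau)$ for each fixed $\tau$, which is exactly what the Lévy criterion requires.

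The main obstacle is that the ACS depends \emph{quadratically} on the filter, so that—unlike in Lemma~\ref{lemma: supplement continuity convolution}, where a single convolution enters linearly—one cannot directly invoke continuity of convolution. The add-and-subtract device resolves this, at the cost of requiring the uniform bound $\sup_n \lVert \Psi^{(n)} \rVert_1 < \infty$; isolating and justifying this uniform bound (from $\ell^1$-convergence) is the one point that deserves explicit care. Everything else is a direct application of the Hilbert–Schmidt triangle and submultiplicativity inequalities already employed in the proof of Lemma~\ref{lemma: supplement continuity convolution}, followed by the Lévy convergence criterion recalled above.
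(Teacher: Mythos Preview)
Your proposal is correct and follows essentially the same route as the paper's own proof: both reduce to pointwise convergence of the ACS via the same add-and-subtract telescoping device, bound the two resulting sums using submultiplicativity of the Hilbert--Schmidt norm together with the uniform $\ell^1$-bound $\sup_n \lVert \Psi^{(n)}\rVert_1 < \infty$, and then invoke the L\'evy criterion. The only cosmetic differences are that the paper restricts to $\mathbf{w}=I$ for notational convenience (whereas you carry the factor $\lVert \mathbf{w}\rVert$ throughout) and that the two proofs choose the mixed term in the telescoping split on opposite sides, which is immaterial.
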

\begin{proof} 
    Let us denote by $\C$ and $\C^{(n)}$ the ACS of the stationary Gaussian Process $\X_\mathbf{V}^{(n)} = (\Psi^{(n)})^\top \ast \eta$ resp. $\X_\mathbf{V} = \Psi^\top \ast \eta$. For any $\tau \in \Z$ their respective elements are given by
    \begin{align*}
        \C^{(n)}(\tau) &= \sum_{j \geq 0} (\Psi^{(n)})^\top(\tau +j) \mathbf{w} \Psi^{(n)}(j) \\
        \C(\tau) &= \sum_{j \geq 0} \Psi^\top(\tau + j) \mathbf{w} \Psi(j). 
    \end{align*}
    Following the previous discussion on convergence in distribution of Gaussian Processes, we need to show that $\lVert\C^{(n)}(\tau) - \C(\tau)\rVert$ converges to zero as we let $n$ go to infinity, for every $\tau \in \Z$. We restrict to the case where the covariance matrix $\mathbf{w}$ is the identity matrix. Let us compute 
    \begin{align*}
        \lVert \C^{(n)}(\tau) - \C(\tau)\rVert & =\lVert \sum_{j \geq 0} (\Psi^{(n)})^\top(\tau +j)\Psi^{(n)}(j)-  \sum_{j \geq 0} \Psi^\top(\tau + j) \Psi(j)\rVert \\
        & \leq \sum_{j \geq 0} \lVert \Psi^{(n)}(\tau + j) - \Psi(\tau + j) \rVert \lVert \Psi^{(n)}(j)\rVert \\
        &+ \sum_{l \geq 0} \lVert \Psi(\tau +l)\rVert (\Psi^{(n)}(l) - \Psi(l))\rVert \\
        &\leq (\alpha + \lVert \Psi \rVert_\infty) \lVert \Psi^{(n)} - \Psi \rVert_1,  
    \end{align*}
    where $\alpha = \sup_{n \geq 0} \lVert \Psi^{(n)} \rVert_1 < \infty$. Hence, the point-wise convergence of the ACS's follows. In a similar manner one shows the convergence of the mean sequences. 
\end{proof}

\begin{corollary}\label{corrollary: supplement convergence SVAR processes}
    Let $\X_\mathbf{V}$ be a stable SVAR process specified by $(\Phi,\mathbf{w})$ and suppose $(\Lambda^{(n)})_{n \geq 1}$ is a sequence in $\ell^{1}_{\geq 0}(\Z, \mathbb{R}^{|\mathbf{V}| \times |\mathbf{V}}|)$ that converges in $\ell^1$ to a filter $\Lambda \in \ell^1_{\geq 0}(\Z, \mathbb{R}^{|\mathbf{V}| \times |\mathbf{V}}|)$. Then the sequence of Gaussian Processes $(\Lambda^{(n)} \ast \X_\mathbf{V})_{n \geq 1}$ converges in distribution to the Gaussian Process $\Lambda\ast \X_\mathbf{V}$.   
\end{corollary}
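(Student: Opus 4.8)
The plan is to reduce the statement to Lemma \ref{lemma: supplement GP convergence} by exploiting the causal moving-average representation of a stable SVAR process recalled above. First I would invoke the fact, established earlier in this section, that a stable SVAR process admits the representation $\X_\mathbf{V} = \Psi^\top \ast \eta'$, where $\Psi \in \ell^1_{\geq 0}$ is the entry-wise absolutely summable MA filter built recursively from $(\Phi, \mathbf{w})$ and $\eta'$ is a Gaussian white noise process. Since each $\Lambda^{(n)}$ and the limit $\Lambda$ lie in $\ell^1_{\geq 0}$, and convolution of entry-wise absolutely summable filters is associative, I can rewrite
\[
    \Lambda^{(n)} \ast \X_\mathbf{V} = \Lambda^{(n)} \ast (\Psi^\top \ast \eta') = (\Lambda^{(n)} \ast \Psi^\top) \ast \eta'.
\]

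Next, to bring this into the exact form required by Lemma \ref{lemma: supplement GP convergence}, I would use the identity $(A \ast B)^\top = B^\top \ast A^\top$ valid for entry-wise absolutely summable filters, and define $\Theta^{(n)} \coloneqq \Psi \ast (\Lambda^{(n)})^\top$, so that $(\Theta^{(n)})^\top = \Lambda^{(n)} \ast \Psi^\top$ and hence $\Lambda^{(n)} \ast \X_\mathbf{V} = (\Theta^{(n)})^\top \ast \eta'$. Analogously, setting $\Theta \coloneqq \Psi \ast \Lambda^\top$ gives $\Lambda \ast \X_\mathbf{V} = \Theta^\top \ast \eta'$. In this way the composed filter–process convolution is expressed as a single causal filter acting on the fixed white noise $\eta'$, which is precisely the situation covered by the preceding lemma.

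Then I would establish that $\Theta^{(n)} \to \Theta$ in $\ell^1$. Since transposition preserves the Hilbert-Schmidt norm of each matrix, the $\ell^1$-convergence $\Lambda^{(n)} \to \Lambda$ passes to the transposes, i.e.\ $(\Lambda^{(n)})^\top \to \Lambda^\top$ in $\ell^1$. Applying Lemma \ref{lemma: supplement continuity convolution} with the fixed filter $\Psi$ then yields $\Theta^{(n)} = \Psi \ast (\Lambda^{(n)})^\top \to \Psi \ast \Lambda^\top = \Theta$ in $\ell^1$. Finally, Lemma \ref{lemma: supplement GP convergence} applied to the sequence $(\Theta^{(n)})_{n\geq 1}$ together with the white noise $\eta'$ shows that $(\Theta^{(n)})^\top \ast \eta'$ converges in distribution to $\Theta^\top \ast \eta'$, which is exactly the assertion that $\Lambda^{(n)} \ast \X_\mathbf{V}$ converges in distribution to $\Lambda \ast \X_\mathbf{V}$.

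The only genuine obstacle is bookkeeping rather than analysis: I must verify the transpose-of-convolution identity, justify the associativity of the mixed filter–filter–process convolution, and check that all filters involved ($\Psi$, the $\Lambda^{(n)}$, the $\Theta^{(n)}$, and their transposes) remain causal and entry-wise absolutely summable, so that both Lemma \ref{lemma: supplement continuity convolution} and Lemma \ref{lemma: supplement GP convergence} genuinely apply. Once these routine facts are recorded, the two lemmas chain together to give the result with no further estimation required.
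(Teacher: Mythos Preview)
Your proposal is correct and follows essentially the same approach as the paper: the paper's proof consists of the single sentence ``This follows by combining Lemma \ref{lemma: supplement continuity convolution} and Lemma \ref{lemma: supplement GP convergence},'' and your argument spells out exactly that combination via the MA representation $\X_\mathbf{V} = \Psi^\top \ast \eta'$. The only minor remark is that Lemma \ref{lemma: supplement continuity convolution} is stated for right-convolution by a fixed filter, whereas you invoke it for left-convolution by $\Psi$; this is harmless since you can instead apply the lemma directly to $\Lambda^{(n)} \ast \Psi^\top \to \Lambda \ast \Psi^\top$ and then transpose, which is equivalent to your $\Theta^{(n)} \to \Theta$.
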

\begin{proof}
    This follows by combining Lemma \ref{lemma: supplement continuity convolution} and Lemma \ref{lemma: supplement GP convergence}. 
\end{proof}

\section{Proof of Theorem 1: structural equation process representation of SVAR processes}\label{section: proof main theorem}
\begin{theorem}[Structural equation process representation, Theorem 1 in the main paper]\label{thm: supplement summary SEM}
     Let $(\Phi, \mathbf{w})$ be a parameter pair that specifies a stable SVAR process $\X_{\mathbf{O} \cup \mathbf{L}}$ consistent with $\mathcal{G}$ such that for every $V \in \mathbf{V}= \mathbf{O} \cup \mathbf{L}$ the following stability condition is satisfied
    \begin{align} \label{condition: supplement stability II}
        \sum_{k=1}^p |\phi_{V,V}(k)| < 1, 
    \end{align} and the power-series of filters $\Dirtmp^\infty \coloneqq\sum_{k = 0}^\infty \Dirtmp^k$ exists and is an entry-wise absolutely summable filter. Then the observed processes satisfy the linear relation
    \begin{equation} \label{eq: supplement processes equation}
        \begin{split}
             \X_\mathbf{O} &= \Dirtmp^\top \ast \X_\mathbf{O} + \Ltmp^\top \ast \mathbf{L} + \X_\mathbf{O}^\internal \\
            &=(\Dirtmp^\infty)^\top \ast \X_\mathbf{O}^\linternal, 
        \end{split}
    \end{equation}
    so that the auto-covariance sequence of the obsered process is 
    \begin{equation}
        \begin{split} \label{eq: supplement corss covariance process graph}
            \C_{\mathbf{O}} &= (\Dirtmp^\infty)^\top \ast (\C_\mathbf{O}^\linternal) \tcvl \Dirtmp^\infty, \text{ where}\\
            \C_\mathbf{O}^\linternal &= \C_\mathbf{O}^\internal + \Gamma^\top \ast \C_\mathbf{L} \tcvl \Gamma.
        \end{split}
    \end{equation} 
\end{theorem}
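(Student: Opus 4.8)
The plan is to establish the two displayed identities by first deriving the structural equation one observed process at a time, then performing two inversions inside the Banach algebra $(\ell^1,\ast)$ of absolutely summable filters, and finally reading off the auto-covariance sequence by a direct filtering computation. I would begin by fixing an observed index $O\in\mathbf{O}$ and writing the defining SVAR recursion component-wise, splitting $\sum_{k=0}^p\sum_{V\in\mathbf{V}}\phi_{V,O}(k)\X_V(t-k)$ into the auto-dependency terms $\sum_{k=1}^p\phi_{O,O}(k)\X_O(t-k)$ (recall no contemporaneous self-loop is permitted), the cross terms over $V\in\mathbf{O}\setminus\{O\}$, and the latent terms over $L\in\mathbf{L}$. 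Writing $\alpha_O\coloneqq\iota-\beta_O$ for the scalar auto-regression filter, where $\beta_O$ carries $\phi_{O,O}(1),\dots,\phi_{O,O}(p)$ at lags $1,\dots,p$ and vanishes elsewhere, and denoting by $\phi_{V,O}$ the finite filter $(\phi_{V,O}(k))_{0\le k\le p}$, the recursion becomes $\alpha_O\ast\X_O=\sum_{V\in\mathbf{O}\setminus\{O\}}\phi_{V,O}\ast\X_V+\sum_{L\in\mathbf{L}}\phi_{L,O}\ast\X_L+\eta_O$.

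The first inversion is scalar. Condition (\ref{condition: supplement stability II}) gives $\lVert\beta_O\rVert_1=\sum_{k=1}^p|\phi_{O,O}(k)|<1$, so the Neumann series $\alpha_O^{-1}=\sum_{n\ge0}\beta_O^n$ converges in $\ell^1$ and makes $\alpha_O$ invertible there; in particular each internal dynamics $\X_O^\internal=\alpha_O^{-1}\ast\eta_O$ is a well-defined stationary process. Convolving the recursion by $\alpha_O^{-1}$, one checks that the recursion (\ref{eq: dir_tmp_effect}) defining $\dirtmp{V}{O}$ is equivalent to $\alpha_O\ast\dirtmp{V}{O}=\phi_{V,O}$ (and likewise for $\ltmp{L}{O}$), so that $\alpha_O^{-1}\ast\phi_{V,O}=\dirtmp{V}{O}$ and $\alpha_O^{-1}\ast\phi_{L,O}=\ltmp{L}{O}$. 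Collecting the $O$-components into matrix-filter form then yields the first line of (\ref{eq: supplement processes equation}) with $\X_\mathbf{O}^\linternal=\X_\mathbf{O}^\internal+\Ltmp^\top\ast\mathbf{L}$.

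The second inversion is global and uses the standing hypothesis that $\Dirtmp^\infty=\sum_{k\ge0}\Dirtmp^k$ exists in $\ell^1$. The telescoping identity $(\iota-\Dirtmp)\ast\sum_{k=0}^N\Dirtmp^k=\iota-\Dirtmp^{N+1}$, the fact that $\Dirtmp^{N+1}\to0$ in $\ell^1$ (convergence of the series forces its terms to vanish), and the continuity of convolution (Lemma \ref{lemma: supplement continuity convolution}) together give $(\iota-\Dirtmp)\ast\Dirtmp^\infty=\iota$, so $\Dirtmp^\infty=(\iota-\Dirtmp)^{-1}$. Rewriting the first line as $(\iota-\Dirtmp^\top)\ast\X_\mathbf{O}=\X_\mathbf{O}^\linternal$ and convolving by $(\Dirtmp^\infty)^\top$ formally yields the second line; to make this rigorous at the process level I would approximate $\Dirtmp^\infty$ by its partial sums and invoke Corollary \ref{corrollary: supplement convergence SVAR processes}, so that $(\sum_{k=0}^N\Dirtmp^k)^\top\ast\X_\mathbf{O}^\linternal$ converges in distribution to $(\Dirtmp^\infty)^\top\ast\X_\mathbf{O}^\linternal=\X_\mathbf{O}$. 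The auto-covariance identity (\ref{eq: supplement corss covariance process graph}) then follows from the elementary filtering formula $\C_{M^\top\ast\X}=M^\top\ast\C_\X\tcvl M$, valid for any $M\in\ell^1$ and stationary $\X$, which one verifies by expanding $\Ep[(M^\top\ast\X)(t)((M^\top\ast\X)(t-s))^\top]$ and matching against the definitions of $\ast$ and $\tcvl$: applying it with $M=\Dirtmp^\infty$ and $\X=\X_\mathbf{O}^\linternal$ gives the first equation, while applying it to $\Gamma^\top\ast\mathbf{L}$ and using that the mutual independence of the innovations renders $\X_\mathbf{O}^\internal$ uncorrelated with the latent processes (so the cross terms vanish) gives $\C_\mathbf{O}^\linternal=\C_\mathbf{O}^\internal+\Gamma^\top\ast\C_\mathbf{L}\tcvl\Gamma$.

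The main obstacle I expect is the rigorous treatment of the infinite filters. Each $\dirtmp{V}{W}$ is already infinitely supported, so $\Dirtmp$ is an infinite filter and $\Dirtmp^\infty$ is an infinite sum of convolutions of infinite filters; the Neumann inversion, the telescoping, and the reordering of summations are legitimate only inside the complete normed algebra $(\ell^1,\ast)$, and transferring these identities from filters to the stochastic processes they act on requires controlling convergence in distribution rather than relying on formal power-series manipulation. This is exactly why the theorem carries two separate hypotheses — condition (\ref{condition: supplement stability II}) for the per-process scalar inversions and the explicit absolute summability of $\Dirtmp^\infty$ for the global inversion — and why the convergence results of Appendix \ref{section: stable VAR processes} are invoked in place of purely algebraic arguments.
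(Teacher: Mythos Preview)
Your proposal is correct and reaches the same conclusion as the paper, but the execution is organised differently. The paper establishes the first line of (\ref{eq: supplement processes equation}) by an explicit recursive expansion: it repeatedly unfolds only the auto-dependency terms of $\X_V$, tracking a hierarchy of truncated filters $f_V^{(n)}$, $f_V^{(<n)}$, $\dirtmp{U}{V}^{(<n+1)}$, proves each converges in $\ell^1$ by bounding $\sum_n\sum_j|f_V^{(n)}(j)|$ via the geometric series $\sum_j(|a_1|+\cdots+|a_p|)^j$, and then passes to the process level through Corollary \ref{corrollary: supplement convergence SVAR processes}. You instead package the same inversion as a one-line Neumann series in the Banach algebra $(\ell^1,\ast)$: the scalar inverse $\alpha_O^{-1}=\sum_{n\ge0}\beta_O^n$ exists because $\lVert\beta_O\rVert_1<1$, and the identity $\alpha_O\ast\dirtmp{V}{O}=\phi_{V,O}$ lets you read off the structural equation immediately after convolving by $\alpha_O^{-1}$. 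Your route is shorter and makes the role of the two hypotheses more transparent (one per inversion); the paper's explicit filter bookkeeping, on the other hand, exposes the combinatorial content (the filter $f_V$ as a sum over auto-dependency paths, equation (\ref{eq: internal dynamics decomposition})) that later feeds into the graphical interpretation of Proposition \ref{prop: causal effects process graph}. One small point: when you invoke Corollary \ref{corrollary: supplement convergence SVAR processes} for the second inversion, note that $\X_\mathbf{O}^\linternal$ is not literally a SVAR process but a filtered white noise, so strictly speaking you should appeal to Lemma \ref{lemma: supplement GP convergence} directly; the argument goes through unchanged.
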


\begin{lemma}\label{lemma: filtering covariance}
    Let $(\mathbf{X}, \mathbf{Y})$ be a stationary stochastic process with bounded auto-covariance sequence, $\Lambda$ a $p \times|\mathbf{X}|$-dimensional filter and $\Gamma$ a $q \times |\mathbf{Y}|$-dimensional filter such that both are element-wise absolutely summable. Then 
    \begin{equation}
        \mathbf{C}_{\Lambda\ast \mathbf{X}, \Gamma \ast \mathbf{{Y}}} = \Lambda \ast \mathbf{C}_{\mathbf{X}, \mathbf{Y}} \hat{\ast} \Gamma^\top
    \end{equation}
\end{lemma}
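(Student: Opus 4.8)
The plan is a direct computation straight from the definitions; the only genuine work lies in justifying the interchange of expectation with the doubly-infinite summations, so I would keep that step explicit and treat the rest as bookkeeping. Since every process in this setting is centred, I drop the mean terms and expand the two filtered processes through the definition of the convolution of a filter with a process, namely $(\Lambda \ast \mathbf{X})(t) = \sum_{r} \Lambda(r)\,\mathbf{X}(t-r)$ and $(\Gamma \ast \mathbf{Y})(t-s) = \sum_{r'} \Gamma(r')\,\mathbf{Y}(t-s-r')$. First I would record a preliminary: because $\Lambda,\Gamma \in \ell^1$ and the ACS of $(\mathbf{X},\mathbf{Y})$ is bounded, both $\Lambda \ast \mathbf{X}$ and $\Gamma \ast \mathbf{Y}$ are again stationary with bounded auto-covariance, so the left-hand cross-covariance $\C_{\Lambda \ast \mathbf{X},\,\Gamma \ast \mathbf{Y}}$ is well defined and independent of $t$.

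Substituting the two expansions into $\C_{\Lambda \ast \mathbf{X},\,\Gamma \ast \mathbf{Y}}(s) = \Ep\big[(\Lambda \ast \mathbf{X})(t)\,\big((\Gamma \ast \mathbf{Y})(t-s)\big)^\top\big]$ and pulling the deterministic filter matrices outside the expectation gives
\begin{align*}
    \C_{\Lambda \ast \mathbf{X},\,\Gamma \ast \mathbf{Y}}(s) &= \sum_{r}\sum_{r'} \Lambda(r)\, \Ep\!\big[\mathbf{X}(t-r)\,\mathbf{Y}(t-s-r')^\top\big]\, \Gamma(r')^\top .
\end{align*}
Here stationarity enters: the inner expectation depends only on the difference of the two time indices, so that $\Ep[\mathbf{X}(t-r)\,\mathbf{Y}(t-s-r')^\top] = \C_{\mathbf{X},\mathbf{Y}}(s + r' - r)$, independently of $t$. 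It then remains to recognise the resulting double sum as the claimed composition of operations. Setting $M \coloneqq \Lambda \ast \C_{\mathbf{X},\mathbf{Y}}$, so that $M(u) = \sum_r \Lambda(r)\,\C_{\mathbf{X},\mathbf{Y}}(u-r)$, the definition of tilted convolution yields $(M \tcvl \Gamma^\top)(s) = \sum_{r'} M(r'+s)\,\Gamma(r')^\top = \sum_{r'}\sum_{r} \Lambda(r)\,\C_{\mathbf{X},\mathbf{Y}}(s+r'-r)\,\Gamma(r')^\top$, which after relabelling is exactly the expression above. This identifies $\C_{\Lambda \ast \mathbf{X},\,\Gamma \ast \mathbf{Y}} = \Lambda \ast \C_{\mathbf{X},\mathbf{Y}} \tcvl \Gamma^\top$.

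The step I expect to be the main obstacle is the legitimacy of moving the expectation through the two infinite sums, since everything above was written formally. I would handle it by a domination argument: by Cauchy--Schwarz against the (bounded) diagonal variances, the entrywise cross-covariances are uniformly bounded, say $\lVert \C_{\mathbf{X},\mathbf{Y}}(\tau) \rVert \le K$ for all $\tau$, while $\Lambda,\Gamma \in \ell^1$. Hence the double series is absolutely summable, dominated by $K\,\lVert \Lambda \rVert_1 \lVert \Gamma \rVert_1 < \infty$, which licenses Tonelli/Fubini to exchange summation and expectation. Equivalently, one may regard each filtered process as an $L^2$-convergent limit of its finite truncations and invoke continuity of the inner product, the bound $K\lVert\Lambda\rVert_1\lVert\Gamma\rVert_1$ again ensuring convergence of the limiting series. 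Either route closes the argument.
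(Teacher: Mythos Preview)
Your proof is correct and follows essentially the same direct computation as the paper, which also expands the filtered processes, pulls the deterministic matrices past the expectation, and identifies the resulting sums with $\Lambda \ast \C_{\mathbf{X},\mathbf{Y}}$ and $\C_{\mathbf{X},\mathbf{Y}} \tcvl \Gamma^\top$ (the paper does it in two separate steps rather than one double sum). You are in fact more careful than the paper in justifying the interchange of summation and expectation via the $\ell^1$/boundedness domination argument.
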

\begin{proof}
We first compute 
    \begin{align*}
         \mathbf{C}_{\Lambda \ast \mathbf{X},\mathbf{Y}}(\tau) &= \Ep[(\Lambda\ast\mathbf{X})(t) (\mathbf{Y})(t-\tau))^\top] \\
         &= \sum_{s \geq 0}\Ep[\Lambda(s)\mathbf{X}(t-s)(\mathbf{Y}(t-\tau))^\top] \\
         & =\sum_{s \geq 0}\Lambda(s) \mathbf{C}_{\mathbf{X}, \mathbf{Y}}(\tau -s)\\
         & = \Lambda\ast\mathbf{C}_{\mathbf{X}, \mathbf{Y}} (\tau)
    \end{align*}
Now we compute $\mathbf{C}_{\mathbf{X}, \Gamma\ast \mathbf{Y}}$
\begin{align*}
    \mathbf{C}_{\mathbf{X}, \Gamma\ast\mathbf{Y}}(\tau) &= \Ep[\mathbf{X}(t) (\Gamma\ast\mathbf{Y}(t-\tau))^\top] \\
    &= \sum_{s \geq 0}\Ep[\mathbf{X}(t)(\mathbf{Y} (t-\tau-s))^\top](\Gamma(s))^\top \\
    &= \sum_{s \geq 0} \mathbf{C}_{\mathbf{X}, \mathbf{Y}}(\tau +s) (\Gamma(s))^\top \\
    &= \mathbf{C}_{\mathbf{X}, \mathbf{Y}} \hat{\ast} (\Gamma)^\top(\tau)
\end{align*}
The lemma follows by combining these two computations. 
\end{proof}
\begin{proof}[Proof of Theorem \ref{thm: supplement summary SEM}]
    We start by showing (\ref{eq: supplement processes equation}). For its proof it is not necessary to distinguish between latent and observed processes. We therefore refer to $\mathbf{V}= (\mathbf{O}, \mathbf{L})$ as the process consisting of the observed and latent processes. Put differently, we neglect the latent structure to avoid unnecessarily complicated notation in the proof. Let us fix two processes $U,V \in \mathbf{V}$. In the following we will denote the coefficients parameterising the auto-dependencies in $V$ by $a_k \coloneqq \phi_{V,V}(k)$. With the recursive filter
    \begin{align*}
        f_V(j) &\coloneqq \begin{cases}
            0 & \text{if $j < 0$} \\
            1 & \text{if $j = 0$} \\
            \sum_{k =1}^{p} a_k f_V(j-k) & \text{otherwise}
        \end{cases}.
    \end{align*}
    we can express the internal dynamics as $\X_V^\internal = f_V \ast \eta_V$. Note that the entry $f_V(j)$ is the sum of all path-coefficients of the paths from $V(t-j)$ to $V(t)$ that only consist of auto-dependency links. 
    
    Furthermore, we rewrite the direct effect filter as
    \begin{align}\label{eq: direct effect internal dynamics}
        \dirtmp{U}{V}(j) &= \sum_{k = 0}^p \phi_{U,V}(k) f_V(j-k),
    \end{align}
    which sums all the path-coefficients of those paths from $U(t-j) \to V(t)$ on which the first link is an edge $U(t-j) \to V(t-j+l)$ and all subsequent links are auto-dependencies in $V$.

    The state of the considered process $V$ at time $t$ is defined by the SVAR-equations, namely
    \begin{align*}
        \X_V(t) &= \sum_{U \in \Pa(V)} \sum_{k = 0}^p \phi_{U,V}(k) \X_U(t-k) + \sum_{k =1}^p a_k \X_V(t-k) + \eta_V(t). 
    \end{align*}
    In a recursive manner, we expand the states $\X_V(t-j)$ while leaving all other state variables that appear during this procedure untouched. For arbitrary recursion step $n$ this yields  
    \begin{equation} \label{eq: recursive expansion}
        \begin{split}
              \X_V(t) &= \sum_{U \in \Pa(V)} \sum_{j \geq 0} \dirtmp{U}{V}^{(<n+1)}(j) \X_U(t-j) + \sum_{j\geq 0 } f_V^{(n)}(j) \X_V(t-j) + \sum_{j \geq 0}f_V^{(<n)}(j)\eta_V(t-j) 
        \end{split}
    \end{equation}
    where 
    \begin{align*}
        f_V^{(1)}(j) &= \begin{cases}
        a_j & \text{if $1 \leq j \leq p$} \\
        1 & \text{if $j =0$} \\
        0 & \text{otherwise}
        \end{cases}
    \end{align*}
    and 
    \begin{align*}
        f_V^{(n+1)}(j) &= \sum_{k=1}^p \phi_{V, V}(k) f_V^{(n)}(j-k), \\
        f_V^{(<n+1)}(j) &= \sum_{l=1}^n f_V^{(l)}(j), \\
        \dirtmp{U}{V}^{(<n+1)}(j) &= \sum_{l=1}^n \sum_{k=0}^p \phi_{U, V}(k) f_V^{(l)}(j-k). 
    \end{align*}
    
    Note that $f_V^{(n)}(k)$ is the sum of path-coefficients that are defined by auto-dependency paths of length $n$ going from $V(t-k) \to V(t)$, which implies $f^{(n)}(k) = 0$ whenever $k < n $ or $k \geq p\cdot n$. 
    If $n \geq j$, then $f_V^{(<n+1)}(j) = f(j)$, as $f_V^{(<n+1)}(j)$ is the sum over all path-coefficients of paths from $V(t-j)$ to $V(t)$ that consist of at most $n+1$ auto-dependency links. Since an auto-dependency link is lagged by at least one time-step, a path composed of auto-dependency links from $V(t-j) \to V(t)$ has at most $j$ edges. Since the set of directed auto-dependency paths can be partitioned by (path-) length we can decompose
    \begin{align}\label{eq: internal dynamics decomposition}
        f_V = \sum_{n \geq 1} f_V^{(n)}.
    \end{align}

    Combining equations (\ref{eq: direct effect internal dynamics}) and (\ref{eq: internal dynamics decomposition}) yields the following expression for the direct effect filter
    \begin{align*}
        \dirtmp{U}{V} &= \sum_{k = 0}^p \phi_{U,V}(k) \sum_{n\geq 1} f_V^{(n)}.
    \end{align*}

    With matrix-valued filters we summarise the process-wise expansion (\ref{eq: recursive expansion}) as follows
    \begin{align}
        \X_\mathbf{V} &= (\Dirtmp^{(<n+1)} + f^{(n)} )^\top \ast \X_\mathbf{V} + f^{(<n)} \ast \eta,  
    \end{align}
    where $\Dirtmp^{(<n+1)}$ is the matrix-valued filter $(\dirtmp{U}{V}^{(<n+1)})_{U,V \in \mathbf{V}}$. The filters $f^{(n)}$ and $f^{(<n)}$ are defined analogously. Note that $f^{(n)}(k)$ resp. $f^{(<n+1)}(k)$ are diagonal matrices for every $k \in \Z$.  
    We prove (\ref{eq: supplement processes equation}) by showing the following convergence in distribution statements
    \begin{align} \label{eq: supplement main thm convergence in distribution}
        \lim_{n \to \infty} (\Dirtmp^{(<n+1)} + f^{(n)})^\top \ast \X_\mathbf{V} &= \Dirtmp^\top \ast \X_\mathbf{V} \\
        \lim_{n \to \infty} f^{(<n+1)} \ast \eta &= \X_\mathbf{V}^\internal.
    \end{align}
    
    Since $\X_\mathbf{V}$ is a stable SVAR process it suffices to check that $\Dirtmp^{(<n)}$ converges to $\dirtmp{U}{V}$, $f^{(<n)}$ converges to $f$, and that $f^{(n)}$ converges to the zero element in $\ell^1$. We show these convergence statements entry-wise. Let us therefore return to our pair of processes $U,V$. 

    To prove these three $\ell^1$ convergence statements we show that
    \begin{align}\label{eq: power-series}
        \sum_{n \geq 1} \sum_{k \geq 0} |f_V^{(n)}(k)| < \infty,
    \end{align}
    by employing the stability condition (\ref{condition: supplement stability II}), which ensures that
    \begin{align} \label{eq: sum of coefficients}
        |a_1| + \cdots + |a_p| < 1. 
    \end{align}
    To do so, consider the formal power series associated with the sequence $f_V$, i.e., 
    \begin{align*}
        F_V(z) &=\sum_{j \geq 1} f_V(j) z^j.
    \end{align*}
    We show that the formal power-series defined by
    \begin{align*}
        G_V(z) &= \sum_{j \geq 1} (a_1 z + \cdots + a_p z^p)^j \\
        &= \sum_{k \geq 1} g_V(j) z^j 
    \end{align*}
    is equal to the formal power-series $F(z)$ by showing that $g_V(j) = f_V(j)$ for all $j \geq 0$. In order to express the coefficient $g_V(j)$ in terms of the $(a_k)_{1\leq k \leq p}$ we introduce the set of tuples
    \begin{align*}
        S_j &= \{ \pi = (k_1, \dots, k_m) | 1 \leq k_l \leq p, \text{ and } \sum_{l=1}^m k_l = k \}.
    \end{align*}
    Furthermore, if $\pi$ is a tuple in $S_k$, then we define
    \begin{align*}
        a^{(\pi)} &= a_{k_1} \cdots a_{k_m}. 
    \end{align*}
    The definition of $G_V$ makes apparent that 
    \begin{align*}
        g_V(j) &= \sum_{\pi \in S_j} a^{(\pi)}.
    \end{align*}
    Using the observation
    \begin{align}
        S_j = \bigcup_{l=1}^{\min\{j-1,p\}} \{(l, \pi'): \pi' \in S_{j-l} \}
    \end{align}
    together with an inductive argument one concludes that $f_V(j) = g_V(j)$ for all $j$. Obviously, $f(j) = g(j)$ if $j \leq 1$. Now assume that $f(l) = g(l)$ for $1 \leq l < j+1$, then 
    \begin{align*}
        f_V(j+1) &= \sum_{k= 1}^p a_k f_V(j+1-k) \\
        &= \sum_{k=1}^p a_k g(j+1-k) \\
        &= \sum_{l=1}^p a_k \sum_{\pi' \in S_{j+1-k}} a^{(\pi')} \\
        &= \sum_{\pi \in S_{j+1}} a^{(\pi)} \\
        &= g_V(j+1).
    \end{align*}
    In order to demonstrate the absolute convergence of $f$ we introduce the sequence $|f|_V = (|f|_V(k))_{k \in \Z}$, which is recursively defined as follows 
    \begin{align*}
        |f|_V(j) &= \begin{cases}
            0 & \text{if $j < 0$} \\
            1 & \text{if $j = 0$} \\
            \sum_{k =1}^{p} |a_k| |f|_V(j-k) & \text{otherwise}
        \end{cases}.
    \end{align*}
    Similarly, $|f|_V^{(n)}$ shall denote the filter whose values follow the same recursion as $f^{(n)}$ but with the absolute values of the auto-dependency coefficients $|a_k|$ instead of $a_k$. One readily checks for every $j \in \Z$ the inequality
    \begin{align}
        |f^{(n)}_V(j)| & \leq |f|_V^{(n)}(j).
    \end{align}
    Using this inequality, we compute 
    \begin{align*}
       \sum_{j \geq 0} \sum_{n \geq 1} |f^{(n)}_V(j)| &\leq \sum_{j \geq 0} \sum_{n \geq 1 } |f|_V^{(n)}(j) \\
        &= \sum_{j \geq 0} |f|_V(j) \\
        &= \sum_{j \geq 1}(|a_1| + \cdots + |a_p|)^j \\
        &= \frac{1}{1 - \sum_{k = 0} |a_k|},
    \end{align*}
    where the first equality is due to equation (\ref{eq: internal dynamics decomposition}) and the last expression is well defined because of (\ref{condition: supplement stability II}). This allows us to conclude with (\ref{eq: power-series}) and hence, with the convergence of $f^{(<n)}_V $ to $f$ in $\ell^1$, which can be seen by combining (\ref{eq: power-series}) with the inequalities 
    \begin{align*}
        \sum_{j \geq 0} |f_V(j) - f_V^{(<n)}(j)| &= \sum_{j \geq 0} |\sum_{m\geq 1} f_V^{(m)}(j) - \sum_{l=1}^{n-1}f^{(l)}_V(j)| \\
        &= \sum_{j \geq 0} |\sum_{m \geq n} f^{(m)}_V(j)| \\
        &\leq \sum_{j \geq 0} \sum_{m \geq n} |f^{(m)}_V(j)|.
    \end{align*}
    The last expression goes to zero as we let $n$ go to infinity because of inequality (\ref{eq: power-series}).
    Analogously, one argues for the $\ell^1$ convergence of $f^{(n)}$ to the zero element and of $\dirtmp{U}{V}^{(n)}$ to $\dirtmp{U}{V}$. 
    Corollary \ref{corrollary: supplement convergence SVAR processes} allows us to conclude with the convergence in distribution (\ref{eq: supplement main thm convergence in distribution}) and thus (\ref{eq: supplement processes equation}). Equation (\ref{eq: supplement corss covariance process graph}) follows by combining the second line of Equation (\ref{eq: supplement processes equation}) and Lemma \ref{lemma: filtering covariance}.
\end{proof}

Note that condition (\ref{condition: supplement stability II}) implies that $\Dirtmp^\infty$ and every CCF $\cetemp{\mathbf{X}}{Y}{\mathbf{Z}}$ are absolutely summable filters whenever the process graph $G$ is acyclic because then the sets of $\mathbf{Z}$-avoiding directed paths are finite sets. However, in case $G$ has cycles the sets of $\mathbf{Z}$-avoiding directed paths can be infinite. Then condition (\ref{condition: supplement stability II}) may not suffice to ensure absolute summability. Nonetheless, there is a simple condition on the SVAR-parameter $\Phi$ granting that the power-series $\Dirtmp^\infty$ (and all CCFs) exists and is absolutely summable.
\begin{proposition} \label{prop: supplement sep existence condition}
    Let $\X_\mathbf{V}$ be a SVAR process of order $p$, specified by the parameter pair $(\Phi, \mathbf{w})$, such that 
    \begin{align} \label{strict stability condition}
        \sum_{V, W \in \mathbf{V}} \sum_{k=0}^p  |\phi_{V,W}(k)| & < 1.
    \end{align}
    Then its direct effect filter $\Dirtmp$ as well as the power-series of filters $\Dirtmp^\infty$ are entry-wise absolutely summable. Furthermore, for any pair of processes $X,Y \in \mathbf{V}$ and collection of processes $\mathbf{Z} \subset \mathbf{V}$ the CCF $\cetemp{X}{Y}{\mathbf{Z}}$ is absolutely summable. 
\end{proposition}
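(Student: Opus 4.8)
The plan is to dominate every filter appearing in the statement by a single nonnegative scalar matrix $M$ and to reduce all three claims to the single fact that $\rho(M)<1$. Set $c_{V,W}\coloneqq\sum_{k=0}^p|\phi_{V,W}(k)|$, so that condition (\ref{strict stability condition}) reads $\sum_{V,W}c_{V,W}<1$; in particular $\sum_{k=1}^p|\phi_{W,W}(k)|\le c_{W,W}<1$, so (\ref{condition: supplement stability II}) holds for every process and Theorem \ref{thm: supplement summary SEM} applies. The auto-dependency filter $f_W$ is then absolutely summable with $\lVert f_W\rVert_1\le(1-c_{W,W})^{-1}$, as established in the proof of Theorem \ref{thm: supplement summary SEM}. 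Using the identity $\dirtmp{V}{W}(s)=\sum_{k=0}^p\phi_{V,W}(k)f_W(s-k)$ from (\ref{eq: direct effect internal dynamics}) and the triangle inequality,
\begin{align*}
    \lVert\dirtmp{V}{W}\rVert_1 \le c_{V,W}\,\lVert f_W\rVert_1 \le \frac{c_{V,W}}{1-c_{W,W}} \eqqcolon M_{V,W}, \qquad V\ne W,
\end{align*}
while $\dirtmp{V}{V}=0$ because the process graph has no self-loops (the auto-dependencies are carried by the internal dynamics). Hence $\Dirtmp$ is already entrywise absolutely summable and $M$ is a nonnegative matrix with vanishing diagonal.

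Next I would pass from single filters to paths. Convolution is submultiplicative, $\lVert a\ast b\rVert_1\le\lVert a\rVert_1\lVert b\rVert_1$, so for a directed walk $\pi=V_0\to\cdots\to V_n$ on the process graph the path-filter satisfies $\lVert\Lambda^{(\pi)}\rVert_1\le\prod_{i=1}^n M_{V_{i-1},V_i}$, and summing these products over all walks of length $n$ from $V$ to $W$ gives exactly $(M^n)_{V,W}$. Combining this with the generalised path-rule (\ref{eq: generalised path-rule}) and the triangle inequality yields
\begin{align*}
    \sum_{\pi\in\mathrm{P}(V,W)}\lVert\Lambda^{(\pi)}\rVert_1 \le \sum_{n\ge0}(M^n)_{V,W}.
\end{align*}
Since the $(\mathbf{Z}\cup\{X\})$-avoiding paths from $X$ to $Y$ form a subset of all walks from $X$ to $Y$ and every summand is nonnegative, the same estimate bounds $\lVert\cetemp{X}{Y}{\mathbf{Z}}\rVert_1$ by $\sum_{n\ge0}(M^n)_{X,Y}$. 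Thus, as soon as $\sum_{n\ge0}M^n$ converges, the series defining $\Dirtmp^\infty$ and every CCF converge absolutely in the Banach space $\ell^1$, which is precisely the asserted entrywise absolute summability.

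The main obstacle is to deduce $\rho(M)<1$ from (\ref{strict stability condition}); a plain row- or column-sum estimate fails because of the factors $(1-c_{W,W})^{-1}$. I would instead run a Collatz–Wielandt comparison against the explicit positive weight $u_W\coloneqq1-c_{W,W}>0$. Since $M_{V,W}=c_{V,W}/(1-c_{W,W})$ for $V\ne W$,
\begin{align*}
    (Mu)_V = \sum_{W\ne V}\frac{c_{V,W}}{1-c_{W,W}}\,(1-c_{W,W}) = \sum_{W\ne V}c_{V,W} = \Big(\sum_W c_{V,W}\Big)-c_{V,V} \le \theta\,(1-c_{V,V}) = \theta\,u_V,
\end{align*}
where $\theta\coloneqq\max_V\frac{\sum_{W\ne V}c_{V,W}}{1-c_{V,V}}$. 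Because $\sum_W c_{V,W}\le\sum_{V',W'}c_{V',W'}<1$, each numerator is strictly smaller than its denominator, so $\theta<1$. Iterating $Mu\le\theta u$ gives $M^n u\le\theta^n u$ for every $n$, and since $u$ lies between two positive constants this forces $(M^n)_{V,W}\le C\,\theta^n$ and hence $\sum_{n\ge0}(M^n)_{V,W}<\infty$. This establishes $\rho(M)<1$ and, via the two preceding paragraphs, the absolute summability of $\Dirtmp$, of $\Dirtmp^\infty$, and of every CCF $\cetemp{X}{Y}{\mathbf{Z}}$. The only genuinely delicate point is guessing the weight vector $u$; the remainder is routine bookkeeping with the $\ell^1$ norm.
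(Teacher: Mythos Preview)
Your proof is correct and takes a genuinely different route from the paper's. Both arguments begin from the same edgewise bound $\lVert\dirtmp{V}{W}\rVert_1\le c_{V,W}/(1-c_{W,W})$, but they diverge in how they control the sum over all walks. The paper factors every walk as a cycle-free path together with repeated traversals of minimal cycles, and then shows that each minimal-cycle contribution $|\Dirfrq|^{(\zeta)}(0)$ is strictly below one so that the associated geometric series converges; the finiteness of cycle-free paths and of minimal-cycle classes then finishes the job. Your argument bypasses this combinatorial decomposition entirely: you assemble the edgewise bounds into a nonnegative matrix $M$, observe that the $\ell^1$-mass of all length-$n$ walks from $V$ to $W$ is dominated by $(M^n)_{V,W}$, and then establish $\rho(M)<1$ via the Collatz--Wielandt test with the weight vector $u_W=1-c_{W,W}$. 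This is cleaner and more robust, since it avoids the bookkeeping of how walks decompose into cycles (which in the paper is handled somewhat loosely). One minor expository point: your appeal to the generalised path-rule is not actually needed and is slightly circular, since Proposition~\ref{prop: generalised path-rule} presupposes that $\Dirtmp^\infty$ exists; but your bound $\sum_{|\pi|=n}\lVert\Lambda^{(\pi)}\rVert_1\le (M^n)_{V,W}$ already gives $\sum_{n}\lVert(\Dirtmp^n)_{V,W}\rVert_1<\infty$ directly, which is what you really use.
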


Before turning to its proof we introduce some notations and an auxillary lemma.
If $(\Phi, \mathbf{w})$ is a parameter pair specifying a SVAR model of order $p$ that is consistent with the process graph $G$, then $|\Phi|= (|\Phi(i)|)_{0\leq i \leq p}$ is the sequence of matrices that are the entry-wise absolute values of the matrices in $\Phi$. For any $V,W \in \mathbf{V}$ we denote by $\dirtmpabs{V}{W}$ resp. $\dirfrqabs{V}{W}$ the direct effect-filter resp. direct transfer function associated with the SVAR model specified by $(|\Phi|, \mathbf{w})$. Observe that 
\begin{equation} \label{eq supplement: sum of coefficients}
    \begin{split}
        \sum_{j \geq 0} \dirtmp{V}{W}(j) & =\dirfrq{V}{W}(0) \\
        &= \frac{\sum_{k=0}^p\phi_{V,W}(k)}{1- \sum_{k=0}^p \phi_{W,W}(k)},
    \end{split}
\end{equation}
which is well defined because of (\ref{condition: supplement stability II}). 

Accordingly, if $\pi$ is a directed path on the process graph $G$, then $|\Dirtmp|^{(\pi)}$ resp. $|\Dirfrq|^{(\pi)}$ represent the path-filter resp. path-function that is linked to the path $\pi$ and the SVAR model specified by $(|\Phi|, \mathbf{w})$. 

\begin{lemma} \label{lemma: supplement absolute summability}
    Let $\pi$ be a directed causal path on $G$, then we have that 
    \begin{align*}
        \sum_{j \geq 0 } |\Dirtmp^{(\pi)}(j)| & \leq \sum_{j \geq 0} |\Dirtmp|^{(\pi)}(j) \\
        & = |\Dirfrq|^{(\pi)}(0).
    \end{align*}
\end{lemma}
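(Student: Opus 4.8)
The plan is to deduce the lemma from two elementary facts: a pointwise domination between the direct effect-filters of the model $(\Phi,\mathbf{w})$ and those of its absolutised counterpart $(|\Phi|,\mathbf{w})$, together with the monotonicity of convolution with respect to pointwise domination by nonnegative filters. Writing $\pi = W_1 \to \cdots \to W_n$, recall that $\Dirtmp^{(\pi)} = \dirtmp{W_1}{W_2}\ast\cdots\ast\dirtmp{W_{n-1}}{W_n}$ and $|\Dirtmp|^{(\pi)} = \dirtmpabs{W_1}{W_2}\ast\cdots\ast\dirtmpabs{W_{n-1}}{W_n}$, where $\dirtmpabs{V}{W}$ is produced by the recursion (\ref{eq: dir_tmp_effect}) with every coefficient $\phi_{V,W}(k)$ and $\phi_{W,W}(k)$ replaced by its absolute value. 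Under the per-process stability condition (\ref{condition: supplement stability II}) each single-edge filter is absolutely summable, so all the convolutions below converge absolutely and $|\Dirfrq|^{(\pi)}(0)$ is finite.

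First I would prove by induction on $s \geq 0$ that $\dirtmpabs{V}{W}(s) \geq 0$ and that $|\dirtmp{V}{W}(s)| \leq \dirtmpabs{V}{W}(s)$. Nonnegativity is immediate, since replacing all coefficients by their absolute values makes every summand in the recursion nonnegative, with base case $\dirtmpabs{V}{W}(0) = |\phi_{V,W}(0)| \geq 0$. For the domination, the triangle inequality applied to (\ref{eq: dir_tmp_effect}) gives, for $0 \leq s \leq p$, the bound $|\dirtmp{V}{W}(s)| \leq \sum_{j=1}^{s} |\dirtmp{V}{W}(s-j)|\,|\phi_{W,W}(j)| + |\phi_{V,W}(s)|$, and the inductive hypothesis $|\dirtmp{V}{W}(s-j)| \leq \dirtmpabs{V}{W}(s-j)$ turns the right-hand side into exactly $\dirtmpabs{V}{W}(s)$; the case $s > p$ is identical without the $\phi_{V,W}$ term.

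Next I would lift this along the whole path. If $a,b$ are absolutely summable filters and $\bar a,\bar b \geq 0$ satisfy $|a(s)| \leq \bar a(s)$ and $|b(s)| \leq \bar b(s)$, then $|(a\ast b)(s)| = |\sum_t a(t)b(s-t)| \leq \sum_t \bar a(t)\bar b(s-t) = (\bar a \ast \bar b)(s)$, with $\bar a \ast \bar b \geq 0$. Iterating this over the $n-1$ edges of $\pi$ yields $|\Dirtmp^{(\pi)}(j)| \leq |\Dirtmp|^{(\pi)}(j)$ for every $j$, hence $\sum_{j\geq 0}|\Dirtmp^{(\pi)}(j)| \leq \sum_{j\geq 0}|\Dirtmp|^{(\pi)}(j)$. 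Finally, since $|\Dirtmp|^{(\pi)}$ is nonnegative and absolutely summable, its Fourier transform at $\omega = 0$ equals its total mass; combining the multiplicativity $\mathcal{F}(\lambda\ast\mu)=\mathcal{F}(\lambda)\mathcal{F}(\mu)$ with the single-edge identity (\ref{eq: sum of coefficients}) applied to the absolutised model, which gives $\sum_{j\geq 0}\dirtmpabs{V}{W}(j) = \dirfrqabs{V}{W}(0)$, one obtains $\sum_{j\geq 0}|\Dirtmp|^{(\pi)}(j) = |\Dirfrq|^{(\pi)}(0)$, closing the chain.

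I do not expect any serious obstacle; the argument is entirely routine once the two inductions are set up. The only points demanding care are the base cases and the observation that the absolutised filters $\dirtmpabs{V}{W}$ are genuinely nonnegative, as it is precisely this nonnegativity that upgrades the passage from the sum of absolute values to the Fourier transform at zero from an inequality to an equality.
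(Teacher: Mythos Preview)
Your proposal is correct and follows essentially the same route as the paper: an induction on $s$ establishing $|\dirtmp{V}{W}(s)| \leq \dirtmpabs{V}{W}(s)$ for single edges, followed by lifting this domination along the path via the triangle inequality for convolutions. You are slightly more explicit than the paper in singling out the nonnegativity of $\dirtmpabs{V}{W}$ and in spelling out why the final equality $\sum_{j\geq 0}|\Dirtmp|^{(\pi)}(j) = |\Dirfrq|^{(\pi)}(0)$ holds, but these are exactly the points the paper leaves implicit.
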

\begin{proof}
    We show this lemma by simple induction on the path-length $|\pi|$. We start with directed paths of length one. Let therefore $\pi = V \to W$ be a link on the process graph $G$. Then we have that $|\dirtmp{V}{W}(0)| = \dirtmpabs{V}{W}(0)$. Assuming that $|\dirtmp{V}{W}(l)| \leq \dirtmpabs{V}{W}(l)$ for $l \leq j-1$, it then follows for $j \leq p$ that 
    \begin{align*}
        |\dirtmp{V}{W}(j)| &= |\sum_{l=1}^j \dirtmp{V}{W}(j-l) \phi_{W,W}(l) + \phi_{V,W}(j)|\\
        &\leq \sum_{l=1}^j  \dirtmpabs{V}{W}(j-l) |\phi_{V,W} (l)| + |\phi_{V,W}(j)| \\
        &=\dirtmpabs{V}{W}(j).
    \end{align*}
    The case $j > p$ works analogously, so that we conclude with $|\dirtmp{V}{W}(j)| \leq \dirtmpabs{V}{W}(j)$ for all $j \in \Z$. 
    
    Suppose $\pi = (V\to W_1, \pi') $ is a path of length $n$ from $V$ to $W$, where $\pi'$ is a path of length $n-1$ from $W_1$ to $W$, then using the induction hypothesis we see that 
    \begin{align*}
        |\Dirtmp^{(\pi)}(j)| &= |\sum_{l= 0}^j \dirtmp{V}{W}(l) \Dirtmp^{(\pi')}(j-l)| \\
        & \leq \sum_{l=0}^j \dirtmpabs{V}{W}(l) |\Dirtmp|^{(\pi')}(j-l) \\
        &= |\Dirtmp|^{(\pi)}(j).
    \end{align*}
    This shows the claim. 
\end{proof}

\begin{proof}[Proof of Proposition \ref{prop: supplement sep existence condition}]
    Let $\Phi$ be an SVAR parameter such that (\ref{strict stability condition}) is satisfied. Then we compute 
    \begin{align*}
        \lVert \Dirfrq(z) \lVert_1 &= \sum_{V\neq W \in \mathbf{V}} \frac{|\sum_{k=0}^p \phi_{v,w}(k)z^k|}{|1- \sum_{j=1}^p \phi_{w,w}(k)z^k|}\\
        &\leq \sum_{V\neq W \in \mathbf{V}} \frac{\sum_{k=0}^p |\phi_{v,w}(k)z^k|}{1- \sum_{j=1}^p |\phi_{w,w}(k)z^k|} \\ 
        &= \lVert |\Dirfrq|(1) \lVert_1 \\
        & \leq (1- \alpha)^{-1} \sum_{v \neq w \in V} \sum_{k = 0}^p |\phi_{v,w}(k)|, 
    \end{align*}
    where 
    \begin{align*}
        \alpha &\coloneqq \max_{V \in \mathbf{V}} \sum_{j=1}^p |\phi_{v,v}(j)|
    \end{align*}
    Due to assumption (\ref{strict stability condition}) it holds that 
    \begin{align*}
        \alpha + \sum_{V\neq W \in \mathbf{V}} \sum_{k=0}^p |\phi_{v,w}(k)| < 1.
    \end{align*}
    From this we conclude that $\lVert \Dirfrq(z)\lVert_1 < \lVert |\Dirfrq|(z)\lVert_1 < 1$  for all $z\in \C$ such that $|z|\leq 1$. 

    This means in particular that $\Dirfrq^\infty(z)$ and also $|\Dirfrq|^\infty(z)$ are well defined for every $z\in S^1$ and furthermore
    \begin{align*}
        \Dirfrq^\infty(z) &= (I - \Dirfrq(z))^{-1} & |\Dirfrq|^\infty(z) &= (I - |\Dirfrq|(z))^{-1}
    \end{align*}
    So the entries in $\Dirfrq^\infty(z)$ are rational expressions in terms of the SVAR-parameters $\Phi$, and the filter $\Dirtmp^\infty$ exists and is absolutely summable, since 
    \begin{align*}
        \lVert \Lambda^\infty \lVert_1 < \lVert |\Lambda|^\infty \rVert_1 = \lVert |\Dirfrq|^\infty(1) \lVert_1 < \infty
    \end{align*}

    Now we show that any controlled causal effect is defined if $\Phi$ satisfies (\ref{strict stability condition}). Let $\mathbf{X}, \mathbf{Z}, \mathbf{Y}$ be mutually disjoint sets of variables. We are interested in determining the controlled causal effect $\cefrq{\mathbf{X}}{\mathbf{Y}}{\mathbf{Z}}(z)$. To compute this we introduce the auxillary matrix $\mathfrak{F}(z)\in \C^{\mathbf{V}\times \mathbf{V}}$ which is entry-wise defined as follows 
    \begin{align*}
        \mathfrak{F}_{V,W}(z) &= \begin{cases}
            \Dirfrq_{V,W}(z) & W \notin \mathbf{X}\cup \mathbf{Z} \\ 
            0 & W \in \mathbf{X}\cup \mathbf{Z}
        \end{cases}
    \end{align*}
    Furthermore, let $\mathfrak{F}^\infty = \sum_{k \geq 0} \mathfrak{F}^k(z)$ be the associated power sum of matrices. From the definition of controlled causal effects it follows that for any $X\in \mathbf{X}$ and $Y \in \mathbf{Y}$ the associated causal effect is computed as follows 
    \begin{align*}
        \cefrq{X}{Y}{\mathbf{Z}}(z) &=  \mathfrak{F}^\infty_{X,Y}(z).
    \end{align*}
    The right-hand side exists because $\lVert \mathfrak{F}(z)\lVert_1 \leq \lVert \Dirfrq(1) \rVert_1 < 1$, as shown above.

    The associated causal effect filter $\cetemp{\mathbf{X}}{\mathbf{Y}}{\mathbf{Z}}$ exists and is absolutely summable
    \begin{align*}
        \lVert \Dirtmp_{X\to Y | \Do(\mathbf{Z})=0} \rVert_1 \leq \lVert |\Dirtmp|_{X\to Y | \Do(\mathbf{Z})=0} \rVert_1  = |\mathfrak{F}|_{X,Y}^\infty(1) \leq   |\Dirfrq|_{X,Y}^\infty(1) < \infty,
    \end{align*}
    which follows by application of Lemma \ref{lemma: supplement absolute summability}. 
    
\end{proof}

\section{Proof of Proposition 2 in the main paper: the generalised path-rule}\label{section: proof generalised path rule}

\begin{proposition}[Proposition 1 in the main paper] 
    Let $X,Y, \mathbf{Z}$ be as in Definition 6 of the main paper and $\mathcal{G}'$ the time series subgraph of $\mathcal{G}$ over the nodes $\mathbf{V} \times  \Z$ such that its set of directed edges is 
    \begin{displaymath}
        \mathcal{D}' \coloneqq\{ V(t-k) \to W(t) \in \mathcal{D}: W \notin \mathbf{Z} \cup \{X\} \} \subset \mathcal{D}.
    \end{displaymath}
    This graph $\mathcal{G}' = (\mathbf{V}, \mathcal{D}')$ is the graph one obtains by deleting all edges in $\mathcal{G}$ which point to any of the nodes in $(X(s), \mathbf{Z}(s))_{s \in \Z}$.
    Further, we denote by $\mathcal{P}'(X(t-s), Y(t))$ the set of all directed paths on $\mathcal{G}'$ that start at $X(t-s)$ and end at $Y(t)$. Then, the $s$-th element of the CCF $\cetemp{X}{Y}{\mathbf{Z}}$ is given by the sum of the path-coefficients of the directed paths from $X(t-s)$ to $Y(s)$ on the subgraph $\mathcal{G}'$, i.e.,   
    \begin{align*}
        \cetemp{X}{Y}{\mathbf{Z}}(s) &=\sum_{\rho  \in \mathcal{P}'(X(t-s), Y(t))} \Phi^{(\rho)}. 
    \end{align*}
\end{proposition}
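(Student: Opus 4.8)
The plan is to prove the identity by a combinatorial path-counting argument that matches the two sides edge-by-edge, building up from a single direct effect-filter to arbitrary path-filters and then to the full sum defining the CCF. Throughout I would rely on the summability guaranteed by Theorem~\ref{prop: summary SEM} (and, in the cyclic case, by Proposition~\ref{prop: supplement sep existence condition}), so that all the infinite sums below converge absolutely and may be rearranged freely.

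First I would establish the base case: for a single link $V \to W$ on $G$, the coefficient $\dirtmp{V}{W}(s)$ equals the sum of path-coefficients $\Phi^{(\rho)}$ over those directed paths $\rho$ on $\mathcal{G}$ from $V(t-s)$ to $W(t)$ whose first edge is a (contemporaneous or lagged) cross-edge $V \to W$ and all of whose remaining edges are auto-dependency edges of $W$. This is essentially the content of the decomposition $\dirtmp{V}{W}(j) = \sum_{k=0}^p \phi_{V,W}(k) f_W(j-k)$ recorded in the proof of Theorem~\ref{prop: summary SEM} (equation (\ref{eq: direct effect internal dynamics})), where $f_W(j-k)$ is already identified as the sum of path-coefficients of the purely auto-dependent paths from $W(t-j+k)$ to $W(t)$; choosing the initial lag $k$ with weight $\phi_{V,W}(k)$ and concatenating with such an auto-dependency path reproduces exactly the path-coefficient of $\rho$.

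Next I would promote this to arbitrary directed paths on $G$ by induction on the length of $\pi = W_1 \to \cdots \to W_n$, using that the path-filter is the convolution $\Lambda^{(\pi)} = \dirtmp{W_1}{W_2} \ast \cdots \ast \dirtmp{W_{n-1}}{W_n}$. The convolution identity $(\Lambda \ast \Gamma)(s) = \sum_u \Lambda(u)\Gamma(s-u)$ splits the total lag $s$ at the intermediate process, while the path-coefficient of the concatenated time-series path factorises as the product of the path-coefficients of its two segments; hence $\Lambda^{(\pi)}(s)$ is precisely the sum of $\Phi^{(\rho)}$ over all directed paths $\rho$ on $\mathcal{G}$ from $W_1(t-s)$ to $W_n(t)$ that pass through the processes $W_1, \ldots, W_n$ in order, moving from $W_i$ to $W_{i+1}$ along a cross-edge and otherwise only along auto-dependencies within each process. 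I will call such a $\rho$ a \emph{time-series realisation of} $\pi$ from $W_1(t-s)$ to $W_n(t)$.

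Finally, summing over $\pi \in \mathrm{P}_{\mathbf{Z}\cup\{X\}}(X,Y)$ as in Definition~\ref{def: ccf}, I would exhibit a bijection between the pairs $(\pi, \rho)$, where $\pi$ avoids $\mathbf{Z}\cup\{X\}$ and $\rho$ is a time-series realisation of $\pi$ from $X(t-s)$ to $Y(t)$, and the directed paths in $\mathcal{P}'(X(t-s),Y(t))$. In the forward direction, a realisation of a $\mathbf{Z}\cup\{X\}$-avoiding path uses no edge pointing into $X$ (the source process, never re-entered) or into any process of $\mathbf{Z}$, so it is a path in $\mathcal{G}'$. Conversely, since $\mathcal{D}'$ contains no edge into $X$ or $\mathbf{Z}$, any $\rho' \in \mathcal{P}'(X(t-s),Y(t))$ never enters $X$ after its start nor any process of $\mathbf{Z}$; projecting $\rho'$ onto the process graph and collapsing maximal runs of auto-dependency edges yields a unique $\pi \in \mathrm{P}_{\mathbf{Z}\cup\{X\}}(X,Y)$ of which $\rho'$ is a realisation. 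Matching path-coefficients term by term then gives $\cetemp{X}{Y}{\mathbf{Z}}(s) = \sum_{\rho' \in \mathcal{P}'(X(t-s),Y(t))} \Phi^{(\rho')}$. I expect the bijection of this last step to be the main obstacle: one must check that the collapse of auto-dependency runs is well-defined and invertible, that it correctly handles cyclic process graphs where $\mathrm{P}_{\mathbf{Z}\cup\{X\}}(X,Y)$ is infinite, and that the lag bookkeeping is consistent, justifying the interchange of the (possibly infinite) sums via the absolute summability noted above.
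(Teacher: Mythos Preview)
Your proposal is correct and follows essentially the same route as the paper: establish the single-edge identity for $\dirtmp{V}{W}(s)$, extend by induction along convolution to arbitrary path-filters $\Lambda^{(\pi)}(s)$, and then match the sum over $\pi\in\mathrm{P}_{\mathbf{Z}\cup\{X\}}(X,Y)$ with $\mathcal{P}'(X(t-s),Y(t))$ via the projection that collapses auto-dependency runs (the paper packages this projection as an explicit map $F$ and proves your second step as Lemma~\ref{lemma: path-rule}). The only cosmetic difference is that for the base case the paper runs a short induction on $s$, whereas you invoke the decomposition $\dirtmp{V}{W}(j)=\sum_{k=0}^p\phi_{V,W}(k)f_W(j-k)$ already recorded in the proof of Theorem~\ref{prop: summary SEM}; both arguments are equivalent.
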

For the sake of proving Proposition 1 in the main paper, we define the map of sets
\begin{align}\label{eq: path projection}
    F: & \bigcup_{X,Y \in \mathbf{V}} \bigcup_{s\geq 0}  \mathcal{P}'(X(t-s), Y(t)) \to \bigcup_{X,Y\in \mathbf{V}}\mathrm{P}(X,Y) \times \Z,
\end{align}
where $\mathcal{P}'(X(t-s), Y(t))$ is the set of all paths of the form $\rho = X(t-s) \to W(t-s + r) \to \cdots \to Y(t)$ in the time series graph $\mathcal{G}$ such that $W \neq X$. Note that we can write $\rho$ as a concatenation of paths, written $\rho = \rho_1 \to \cdots \to \rho_m$, such that (i) the links in each $\rho_i$ are only auto-dependencies, that is, each $\rho_i$ is associated to a unique $X_i \in \mathbf{V}$, and (ii) for every $i$ the end-point of $\rho_{i}$ is different from the starting point of $\rho_{i+1}$, that is, $X_i \neq X_{i+1}$. This path-decomposition defines the mapping $F$, specifically,  $F(\rho) = (X_1 \to \cdots \to X_m, s)$.  

\begin{lemma}\label{lemma: path-rule}
    If $\pi$ is a directed path in the process graph $G$, then its path-filter can be expressed as a sum of path-coefficients in $\mathcal{G}$ as follows
    \begin{align*}
        \Lambda^{(\pi)}(s) &= \sum_{\rho \in F^{-1}(\pi, s)} \Phi^{(\rho)}.
    \end{align*}
\end{lemma}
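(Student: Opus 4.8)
Let me understand what Lemma \ref{lemma: path-rule} claims. We have a directed path $\pi = W_1 \to \cdots \to W_n$ on the process graph $G$. Its path-filter is $\Lambda^{(\pi)} = \dirtmp{W_1}{W_2} \ast \cdots \ast \dirtmp{W_{n-1}}{W_n}$. We want to show that the $s$-th element equals a sum of path-coefficients $\Phi^{(\rho)}$ over all $\rho \in F^{-1}(\pi, s)$.

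The map $F$ takes a directed path $\rho$ in the time series graph (from $X(t-s)$ to $Y(t)$) and decomposes it into "auto-dependency segments." Specifically, $\rho = \rho_1 \to \cdots \to \rho_m$ where each $\rho_i$ consists only of auto-dependency links on a single process $X_i$, and consecutive segments are on distinct processes. Then $F(\rho) = (X_1 \to \cdots \to X_m, s)$. So $F^{-1}(\pi, s)$ is all TSG paths from (start of $\pi$)$(t-s)$ to (end of $\pi$)$(t)$ that "project" to $\pi$ and have total lag $s$.

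So the fiber $F^{-1}(\pi, s)$ consists of TSG paths $\rho$ that, when you collapse runs of auto-dependency edges on a single process, give exactly the process-graph path $\pi$, and whose total time-lag is $s$.

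**The recursion structure: direct effect filters as auto-dependency paths.**

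Recall the direct effect-filter definition (eq. \ref{eq: dir_tmp_effect}). From the proof of Theorem 1 (eq. \ref{eq: direct effect internal dynamics}), we have
$$\dirtmp{U}{V}(j) = \sum_{k=0}^p \phi_{U,V}(k) f_V(j-k),$$
where $f_V(j)$ is the sum of path-coefficients of all auto-dependency paths from $V(t-j)$ to $V(t)$ (this is established in the main theorem proof, where $f_V(j)$ counts auto-dependency path-coefficients and $f_V = \sum_{n\geq 1} f_V^{(n)}$). So $\dirtmp{U}{V}(j)$ is precisely the sum of path-coefficients of TSG paths from $U(t-j)$ to $V(t)$ whose first edge is $U \to V$ (a cross-edge, lagged by some $k$) and whose remaining edges are auto-dependency links on $V$.

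**What I need to verify.**

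This is exactly $F^{-1}$ for a length-one process-graph path! Let me check: if $\pi = U \to V$ (single edge), then $F^{-1}(\pi, j)$ is all TSG paths from $U(t-j)$ to $V(t)$ that project to $U \to V$. Projecting to $U \to V$ means: one run of auto-dependency on $U$ (possibly empty), then the cross-edge $U \to V$, then a run of auto-dependency on $V$. But wait—$F$ requires the first segment to be auto-dependency on the *starting* process, but $\rho$ starts at $X(t-s) = U(t-j)$, and the path-coefficient starts counting from there. Let me re-read $F$: the decomposition is $\rho = \rho_1 \to \cdots \to \rho_m$ with $\rho_i$ auto-dependency on $X_i$, and $X_i \ne X_{i+1}$. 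So the *last* edge of $\rho_i$ is the cross-edge $X_i \to X_{i+1}$? No—re-reading, the transitions between segments are the cross-edges, and $F(\rho) = (X_1 \to \cdots \to X_m, s)$. So for $\pi = U \to V$ we have $m = 2$, $X_1 = U$, $X_2 = V$. Then $\rho$ is: auto-dependency on $U$, then cross-edge $U \to V$, then auto-dependency on $V$. Here I must confirm whether the direct effect filter $\dirtmp{U}{V}(j)$ allows a leading auto-dependency run on $U$.

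Looking again at eq. \ref{eq: direct effect internal dynamics}: $\dirtmp{U}{V}(j) = \sum_k \phi_{U,V}(k) f_V(j-k)$—this has the cross-edge *first* (at lag $k$ from the start), then auto-dependencies on $V$. There is NO leading auto-dependency on $U$. This is a discrepancy I need to resolve carefully.

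**The plan.**

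The plan is to prove the lemma by induction on the path-length $n = |\pi|$, mirroring the structure of Lemma \ref{lemma: supplement absolute summability}.

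\emph{Base case.} For $\pi = W_1 \to W_2$ of length one, I will show $\dirtmp{W_1}{W_2}(s) = \sum_{\rho \in F^{-1}(\pi, s)} \Phi^{(\rho)}$. Here the key is to reconcile the definition of $F$ with eq. \ref{eq: direct effect internal dynamics}. I expect that the intended reading of $F$ has each segment $\rho_i$ being the auto-dependency run that occurs \emph{after} arriving at $X_i$ via the cross-edge, so that the very first segment $\rho_1$ on $X_1$ is degenerate (empty) and the first actual edge of $\rho$ is the cross-edge $X_1 \to X_2$. Under this reading, $F^{-1}(W_1 \to W_2, s)$ consists of paths $W_1(t-s) \to W_2(t-s+k) \to \cdots \to W_2(t)$: a single cross-edge at lag $k$ followed by auto-dependencies on $W_2$ summing to lag $s - k$. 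Summing path-coefficients gives $\sum_{k} \phi_{W_1,W_2}(k) f_{W_2}(s-k) = \dirtmp{W_1}{W_2}(s)$ by eq. \ref{eq: direct effect internal dynamics} and the fact that $f_V(j)$ aggregates auto-dependency path-coefficients. I will make this reconciliation explicit as the crux of the base case.

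\emph{Inductive step.} Suppose the claim holds for all process-graph paths of length $< n$, and let $\pi = W_1 \to W_2 \to \cdots \to W_{n+1}$, which I split as $\pi = (W_1 \to W_2) \to \pi'$ with $\pi' = W_2 \to \cdots \to W_{n+1}$ of length $n-1$. By definition of the path-filter and convolution,
$$\Lambda^{(\pi)}(s) = \sum_{l=0}^{s} \dirtmp{W_1}{W_2}(l)\, \Lambda^{(\pi')}(s-l).$$
By the base case, $\dirtmp{W_1}{W_2}(l) = \sum_{\rho_a \in F^{-1}(W_1\to W_2, l)} \Phi^{(\rho_a)}$, and by the induction hypothesis, $\Lambda^{(\pi')}(s-l) = \sum_{\rho_b \in F^{-1}(\pi', s-l)} \Phi^{(\rho_b)}$. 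The key combinatorial step is then to establish a bijection between $F^{-1}(\pi, s)$ and the disjoint union $\bigsqcup_{l=0}^{s} F^{-1}(W_1 \to W_2, l) \times F^{-1}(\pi', s-l)$, obtained by splitting a TSG path at the node $W_2(t-(s-l))$ — i.e., at the first arrival at process $W_2$ that initiates the segment projecting to the second vertex of $\pi$. Under this bijection, concatenation of TSG paths corresponds to multiplication of path-coefficients, $\Phi^{(\rho_a \to \rho_b)} = \Phi^{(\rho_a)} \Phi^{(\rho_b)}$, which matches the convolution sum.

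**The main obstacle.**

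The hard part will be the bijection in the inductive step, and in particular getting the splitting point canonical. Because $\pi$ may revisit the same process (it is a general directed walk on $G$, possibly with cycles), a single TSG path $\rho \in F^{-1}(\pi,s)$ might visit process $W_2$ several times, and I must split at the \emph{correct} occurrence — namely the transition $W_1 \to W_2$ that corresponds to the first edge of $\pi$, not a later return to $W_2$. The map $F$ records the ordered sequence of distinct-from-neighbor processes, so each edge of $\pi$ is matched to exactly one cross-edge transition in $\rho$ in order; splitting at the cross-edge realizing the first edge $W_1 \to W_2$ of $\pi$ is therefore well-defined. I will need to argue that this splitting is a genuine bijection: injectivity because the split point is uniquely determined by the path-decomposition structure recorded by $F$, and surjectivity because any compatible concatenation $\rho_a \to \rho_b$ with matching endpoint process $W_2$ yields a valid element of $F^{-1}(\pi,s)$ (the distinct-neighbor condition $X_i \ne X_{i+1}$ along $\pi$ guarantees the cross-edge $W_1 \to W_2$ is not absorbed into an auto-dependency run). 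I expect this bookkeeping — rather than any analytic estimate — to be where the real care is needed; everything else is the routine unwinding of convolution and the definition $\Phi^{(\rho)} = \prod_i \phi_{\cdot,\cdot}(\cdot)$.
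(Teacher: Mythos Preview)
Your proposal is correct and follows essentially the same inductive strategy as the paper's proof: induction on the length of $\pi$, with the base case identifying $F^{-1}(V\to W,s)$ as paths consisting of one cross-edge followed by auto-dependencies on $W$, and the inductive step via a concatenation bijection that mirrors convolution. The only cosmetic differences are that the paper proves the base case by a separate induction on $s$ using the recursive definition of $\dirtmp{V}{W}$ directly (rather than invoking the representation $\dirtmp{U}{V}(j)=\sum_k \phi_{U,V}(k) f_V(j-k)$ from the proof of Theorem~\ref{thm: supplement summary SEM}), and that the paper splits off the \emph{last} edge of $\pi$ in the inductive step rather than the first; your extra care about the bijection when $\pi$ revisits a process is warranted but, as you anticipate, resolved by the fact that $F$ records the ordered sequence of segment-processes.
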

\begin{proof}
    We prove the statement by induction and start with the case where $\pi$ has only one edge connecting two distinct nodes $V,W$. We need to show for every $s \geq 0$ that
    \begin{align*}
        \dirtmp{V}{W}(s) &= \sum_{\rho \in F^{-1}(V\to W, s)} \Phi^{(\rho)}.
    \end{align*}
    The equality holds if $s =0$. Let $s > 1$ and suppose we have proven the statement for all $0 \leq s' < s$. 
    The set of paths $F^{-1}(V \to W, s)$ is that of all paths on $\mathcal{G}$ of form $V(t-s) \to W(t-s+r_1) \to \cdots \to W(t-s+r_m) \to W(t)$, where $0 \leq r_i < r_{i+1}$, and $1 \leq i \leq m-1$. If such a path $\rho$ has more than one edge, then we denote by $\Delta(\rho) \coloneqq r_{m}$ the last time-point through which $\rho$ passes before it arrives at its target node $W(t)$. The set $F^{-1}(V \to W, s)$ can be partitioned in the set of paths which contain exactly one edge and the set of paths which contain more than one edge. The set of paths which contain more than one edge can be partitioned further into the following $s-1$ sets
    \begin{align*}
        F_k &= \{\rho \in F^{-1}(V \to W, s): \text{ }\Delta(\rho) = k \} \\
        &= \{ \rho' \to W(t): \text{ } \rho' \in F^{-1}(V \to W, s -k ) \}. 
    \end{align*}
    By induction we conclude that the sum of path-coefficients associated to the paths in $F_k$ satisfies the equality 
    \begin{align*}
        \sum_{\rho \in F_k} \Phi^{(\rho)} &= \phi_{W, W} (k) \dirtmp{V}{W}(s-k).
    \end{align*}
    Combining this equality with the partition of $F^{-1}(V \to W, s)$ gives 
    \begin{align*}
        \sum_{\rho \in F^{-1}(V \to W, s)} \Phi^{(\rho)} &= \phi_{V, W}(k) + \sum_{k=1}^{s-1} \phi_{V, W}(k) \dirtmp{V}{W}(s-k) \\
        &= \dirtmp{V}{W}(s),
    \end{align*}
    where for $k > p$ we set $\Phi_{V \to W}(k)$ and $\Phi_{V \to V}(k)$ to zero. This shows the claim for paths on the process graph $G$ which contain only one edge.
    
    To prove the general case, let $\pi = X \to \cdots \to W \to Y = \pi' \to Y$. Observe that $F^{-1}(\pi, s)$ can be considered as the following union of product sets
    \begin{align*}
         \bigcup_{0 \leq r \leq s} F^{-1}(\pi', r) \times F^{-1}(W\to Y, s-r).
    \end{align*} Accordingly, 
    \begin{align*}
        \sum_{\rho \in F^{-1}(\pi, s)} \Phi^{(\rho)} &= \sum_{0 \leq r \leq s} (\sum_{\rho'F^{-1}(\pi', r)} \Phi^{(\rho')}) (\sum_{\rho'' \in F^{-1}(W \to Y, s-r)} \Phi^{(\rho'')}) \\
        &= \sum_{0 \leq r \leq s} \Dirtmp^{(\pi')}(r) \dirtmp{W}{Y}(s-r)\\
        &= \Dirtmp^{(\pi)}.
    \end{align*}
\end{proof}
\begin{proof}[Proof of Proposition 1 in the main paper]
    Let us denote by $\mathcal{G}'$ the subgraph of $\mathcal{G}$ that contains only those edges that do not point to any $W(t)$, where $W \in \mathbf{Z} \cup \{X\}$, and by $\mathcal{P}_{\mathcal{G'}}(X(t-s), Y(t))$ the set of directed paths from $X(t-s)$ to $Y(t)$ on $\mathcal{G}'$, then
    \begin{align*}
        \cetemp{X}{Y}{\mathbf{Z}}(s) &= \sum_{\pi \in \mathrm{P}_{\mathbf{Z} \cup \{X\}}(X,Y)} \Dirtmp^{(\pi)}\\
        &= \sum_{\pi \in \mathrm{P}_{\mathbf{Z} \cup \{X\}}(X,Y)} \sum_{\rho \in F^{-1}(\pi, s)} \Phi^{(\rho)}\\
        &=\sum_{\rho \in \mathcal{P}_{\mathcal{G}'}(X(t-s), Y(t))}\Phi^{(\rho)}.
     \end{align*}
     The last equality follows, since for every $\rho \in \mathcal{P}_{\mathcal{G}'}(X(t-s), Y(t))$ its projection $F(\rho)$ is a path in $\mathrm{P}_{\mathbf{Z} \cup \{X\}}(X,Y)$. If on the other hand $\pi \in \mathrm{P}_{\mathbf{Z} \cup \{X\}}(X,Y)$, then $F^{-1}(\pi) \subset \mathcal{P}_{\mathcal{G}'}(X(t-s), Y(t))$. 
\end{proof}

\section{Parameterisation of the process graphs in Figure 7 of the main paper}\label{section: paramterisations} 
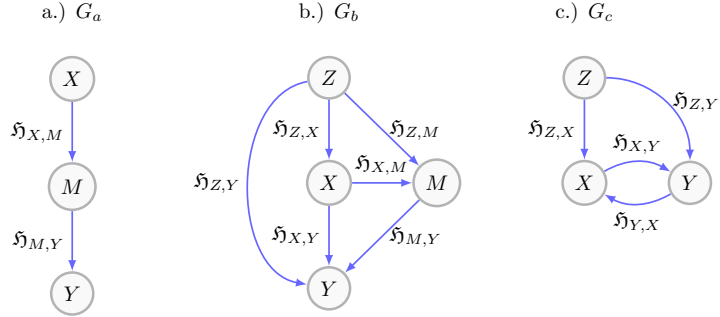
\begin{figure}
    \centering
    \resizebox{0.8\textwidth}{!}{
    \begin{tikzpicture}[
       ts_node/.style={circle, draw=gray, fill=gray!60, thick}, 
       header/.style={rectangle, minimum size=1cm, thick},
       snode/.style={circle, draw=gray!60, fill=gray!5, very thick}
    ]

    \node[header] (a) at (0,0) {a.) $G_a$};
    \node[snode] (X_a) [below=0.2cm of a] {$X$};
    \node[snode] (M_a) [below=of X_a] {$M$};
    \node[snode] (Y_a) [below=of M_a] {$Y$};  

    \node[header] (b) [right=3.cm of a] {b.) $G_b$};
    \node[snode] (Z_b) [below=0.2cm of b] {$Z$};
    \node[snode] (X_b) [below= of Z_b] {$X$};
    \node[snode] (Y_b) [below=of X_b] {$Y$};
    \node[snode] (M_b) [right=of X_b] {$M$};

    \node[header] (c) [right=3.cm of b] {c.) $G_c$};
    \node[snode] (Z_c) [below=0.2cm of c] {$Z$};
    \node[snode] (X_c) [below=of Z_c] {$X$};
    \node[snode] (Y_c) [right=of X_c] {$Y$};

    \draw[->, draw=blue!60, thick] (X_a) -- node[left] {$\dirfrq{X}{M}$} (M_a);
    \draw[->, draw=blue!60, thick] (M_a) -- node[left] {$\dirfrq{M}{Y}$} (Y_a);

    \draw[->, draw=blue!60, thick] (Z_b) -- node[left] {$\dirfrq{Z}{X}$} (X_b);
    \draw[->, draw=blue!60, thick] (Z_b) -- node[right] {$\dirfrq{Z}{M}$} (M_b);
    \draw[->, draw=blue!60, thick] (Z_b) to [out=190, in=170] node[left] {$\dirfrq{Z}{Y}$} (Y_b);
    \draw[->, draw=blue!60, thick] (X_b) -- node[left] {$\dirfrq{X}{Y}$} (Y_b);
    \draw[->, draw=blue!60, thick] (X_b) -- node[above] {$\dirfrq{X}{M}$} (M_b);
    \draw[->, draw=blue!60, thick] (M_b) -- node[right] {$\dirfrq{M}{Y}$} (Y_b);

    \draw[->, draw=blue!60, thick] (Z_c) -- node[left] {$\dirfrq{Z}{X}$} (X_c);
    \draw[->, draw=blue!60, thick] (X_c) to [out=30, in=150] node[above] {$\dirfrq{X}{Y}$} (Y_c);
    \draw[->, draw=blue!60, thick] (Y_c) to [out=210, in = 330] node[below] {$\dirfrq{Y}{X}$} (X_c);
    \draw[->, draw=blue!60, thick] (Z_c) to [out=0, in=90] node[right] {$\dirfrq{Z}{Y}$} (Y_c);
    \end{tikzpicture}}
    \caption{This figure shows three example process graphs.}
    \label{fig: supplement summary_graphs_demo}
\end{figure}
The process graphs in Figure \ref{fig: supplement summary_graphs_demo} have the following underlying SAVR processes.

\textbf{Process graph a.)} For this graph $\mathcal{G}_S^a$ we choose the following underlying VAR-model
\begin{align*}
    \X_X(t) &= 0.7X(t-1) + \eta_X(t)\\
    \X_M(t) &= -0.5M(t-2) + 0.3M(t-1) + 0.3X(t-1) + \eta_M(t) \\
    \X_Y(t) &= 0.7Y(t-1)+ 0.3M(t-1)+  \eta_Y(t).
\end{align*}
These equations specify the link-transfer-functions. They are as follows
\begin{align*}
    \dirfrq{X}{M}(\omega) &= \frac{0.3 \exp(-i\omega)}{1 - 0.3\exp(-i\omega) + 0.5 \exp(-i2\omega)} \\
    \dirfrq{M}{Y} (\omega) &= \frac{0.3 \exp(-i\omega)}{1- 0.7 \exp(-i\omega)}.
\end{align*}
The spectral density of the internal dynamics is specified by its diagonal entries, as this model does not include latent confounders 

\textbf{Process graph b.)} For the process graph $\mathcal{G}_S^b$ we choose the following underlying VAR-model
\begin{align*}
    \X_Z(t) &= -0.5Z(t-5) + 0.3Z(t-1) + \eta_{Z}(t) \\
    \X_X(t) &= -0.4X(t-4) - 0.2 X(t-3) + 0.3X(t-1) + 0.2 Z(t-1) \eta_{X}(t) \\
    \X_M(t) &= 0.5M(t-1) + 0.3 X(t-2) + 0.2 Z(t-1) + \eta_{M}(t) \\
    \X_Y(t) &= -0.5Y(t-6) + 0.3Y(t-1) + 0.3X(t-1) + 0.2M(t-2) + \eta_{Y}(t),
\end{align*}
from which we get the following link-transfer-functions
\begin{align*}
    \dirfrq{Z}{X}(\omega) &= \frac{0.2 \exp(-i \omega)}{1- 0.3\exp(-i\omega) + 0.2 \exp(-i 3\omega ) + 0.4 \exp(-i 4 \omega)} \\
    \dirfrq{Z}{M}(\omega) &= \frac{0.2 \exp(- i \omega)}{1- 0.5 \exp(-i \omega)} \\
    \dirfrq{X}{M}(\omega) &= \frac{0.2 \exp(-i 2 \omega)}{1- 0.5 \exp(-i \omega)} \\
    \dirfrq{X}{Y}(\omega) &= \frac{0.3 \exp(- i \omega)}{1- 0.3 \exp(-i \omega ) + 0.5 \exp(-i 6 \omega)}\\
    \dirfrq{M}{Y}(\omega) &= \frac{0.2 \exp(- i\omega)}{1-0.3 \exp(-i\omega) + 0.5 \exp(-i 6 \omega)}.
\end{align*}
\textbf{Process graph c.)} For the process graph $\mathcal{G}^c_S$ we choose the following underlying VAR-model
\begin{align*}
    \X_Z(t) &= 0.5 Z(t-1) + \eta_Z(t) \\
    \X_X(t) &= 0.3X(t-1) - 0.5X(t-2) + 0.3Z(t-1) + 0.3Y(t-2) + \eta_X(t) \\
    \X_Y(t) &= 0.3Y(t-1) -0.3Y(t-3) + 0.3X(t-2) + 0.2Z(t-3) \eta_Y(t).  
\end{align*}
The associated link-transfer-functions are as follows
\begin{align*}
    \dirfrq{Z}{X} (\omega) &= \frac{0.3 \exp(- i \omega )}{1 - 0.3\exp(-i\omega ) + 0.5 \exp(-i2\omega)} \\
    \dirfrq{Z}{Y} (\omega) &= \frac{0.2 \exp(-i 3 \omega)}{1 - 0.3 \exp(-i \omega) + 0.3 \exp(-i 3 \omega)} \\
    \dirfrq{X}{Y} (\omega) &= \frac{0.3 \exp(-i2\omega)}{1 - 0.3 \exp(-i \omega) + 0.3 \exp(-i 3 \omega)} \\
    \dirfrq{Y}{X} (\omega) &= \frac{0.3 \exp(-i \omega )}{1 - 0.3\exp(-i\omega ) + 0.5 \exp(-i2\omega)}.
\end{align*}

\section{Spectral density decomposition}\label{section: spectrum decomposition}

\subsection{Computation of the confounding factor in the spectral density decomposition in section 5.3 of the main paper}
Recall that in the main paper we derived a decomposition of the spectral density of a process $W$ of the form 
\begin{align*}
    \Sp{W} &= \spcausal{V}{W} + \spconf{V}{W} + \spresidual{V}{W},
\end{align*}
where the second factor indicates the possible confounding of the causal effect $\ucetemp{V}{W}$ and is defined as the sum over all trek-monomial-functions associated with the treks in $\T{V \leftrightarrow W} \coloneqq \T{V}(W,W) \setminus \T{V\to W}$. To illustrate how this measures the degree of confounding, we inspect the cross-spectrum $\Sp{W, V}$, which is the sum of trek-monomial-functions associated with the treks in $\T{}(W,V)$. This set can be partitioned into the set of treks $\pi$ for which $V \in \trekL(\pi) \cap \trekR(\pi)$, represented by $\T{L \cap R}(W,V)$, and those treks $\pi$ for which $V \notin \trekR(\pi)$, labelled $\T{R \setminus L}(W,V)$, so that we can decompose   
\begin{equation} \label{eq: cross-spectrum confounding}
    \begin{split}
        \Sp{W,V} &= \sum_{\pi_{L \cap R} \in \T{L \cap R}(W,V)} \Sp{}^{(\pi_{L\cap R})} +  \sum_{\pi_{L \setminus R} \in \T{L \setminus R}(W,V)} \Sp{}^{(\pi_{L\setminus R})} \\
    &= \ucefrq{V}{W} \Sp{V} + \sum_{\pi_{R \setminus L} \in \T{R \setminus L}(W,V)} \Sp{}^{(\pi_{R \setminus L})}, 
    \end{split}
\end{equation}
where the last equality holds as any trek $\pi_{L \cap R}\in \T{L\cap R}(W,V)$ can be uniquely decomposed as a concatenation of the form $\pi_{L \cap R} = \pi_1 \leftarrow \pi_2$, where $\pi_1 \in \mathrm{P}_V(V,W)$ and $\pi_2 \in \T{}(V,V)$. This decomposition of $\Sp{W,V}$ shows that $\ucefrq{V}{W} = \frac{\Sp{W,V}}{\Sp{V}}$ whenever the second summand in (\ref{eq: cross-spectrum confounding}) is zero, e.g. if $\T{R \setminus L}(W,V)=\emptyset$. In this case, the causal effect of $V$ on $W$ is said to be unconfounded so that it can be retrieved directly from the spectral density matrix. With the help of the trek-rule, we decompose the confounding contribution to $\Sp{W}$ as follows 
\begin{equation}\label{eq:supplement spectral confounding}
    \begin{split}
        \spconf{V}{W} &\coloneqq \sum_{\pi \in \T{V\leftrightarrow W}} \Sp{}^{(\pi)} \\
        &=(\sum_{\pi_{R \setminus L} \in \T{R \setminus L}(W,V)} \Sp{}^{(\pi_{R \setminus L})}) \ucefrq{V}{W}^\ast + \ucefrq{V}{W}(\sum_{\pi_{L \setminus R} \in \T{L \setminus R}(V,W)} \Sp{}^{(\pi_{R \setminus L})}) \\
        &= \Sp{W,V} \ucefrq{V}{W}^\ast + \ucefrq{V}{W}\Sp{V,W} - 2|\ucefrq{V}{W}|^2\Sp{V} \\
        &= 2\mathrm{Re}(\ucefrq{V}{W}\Sp{V,W} - |\ucefrq{V}{W}|^2\Sp{V})
    \end{split}
\end{equation}
where $\T{L \setminus R}(V,W)$ is the set consisting of those treks $\pi$ from $V$ to $W$ for which $V$ lies not in $ \trekR(\pi)$. The first equality follows as for every trek $\pi \in \T{V \leftrightarrow W}$ either $V \in \trekL(\pi)$ or $V \in \trekR(\pi)$ (but not both). Suppose $\pi \in \T{V \leftrightarrow W}$ and $V \in \trekR(\pi)$, then $\pi$ can be uniquely decomposed as a concatenation of the form $\pi_1\to\pi_2$ where $\pi_1 \in \T{L \setminus R}(W,V)$ and $\pi_2 \in \mathrm{P}_V(V,W)$. Hence, the sum over all trek-monomial-functions associated with treks $\pi\in \T{V \leftrightarrow W}$ such that $V \in \trekL(\pi)$ equals 
\begin{align*}
    (\sum_{\pi_1 \in \T{L \setminus R}(V,W)} \Dirfrq^{(\pi_1)}) (\sum_{\pi_2 \in \mathrm{P}_V(V,W)} \Dirfrq^{(\pi_2)}) &= \Sp{W,V}\ucefrq{V}{W}^\ast - |\ucefrq{V}{W} |^2\Sp{V}.
\end{align*}
This follows from the definition of $\ucefrq{V}{W}$ and equation (\ref{eq: cross-spectrum confounding}). Analgously one shows that the sum over all trek-monomial functions that are associated with treks $\pi \in \T{V \leftrightarrow W}$ such that $V \in \trekR(\pi)$ is equal to 
\begin{align*}
    \ucefrq{V}{W}\Sp{V,W} - |\ucefrq{V}{W}|^2 \Sp{V}.
\end{align*}
This implies the equations (\ref{eq:supplement spectral confounding}). 
\subsection{Details on the computation of the spectral density decomposition in Example 6 of the main paper}

We start by determining the causal contribution $\spcausal{X}{Y}$. Recall that we have to find all the treks $\pi$ such that $X$ appears both in its left-hand side, i.e. $X \in \trekL(\pi)$, and its right-hand side, i.e. $X \in \trekR(\pi)$. As the process graph contains a cycle involving both $X$ and $Y$ there are infinitely many treks along which $X$ causally contributes to the variability of $Y$. We partition the set of treks into five sets $\mathcal{T}_{X\to Y}^i$, where $i=1, \dots, 5$, and compute for each set its associated trek-monomial-function-series. We begin with
\begin{align*}
    \mathcal{T}_{X \to Y}^1 &= \{ X \leftrightarrows^{\times k} Y \leftarrow X \to Y \rightleftarrows^{\times l} X \}_{l,k \geq 0}.
\end{align*}
The elements in this set shall be understood as follows: For each pair of non-negative integers $(k,l)$ we get a trek. On its left-hand side we first go from $X$ to $Y$ and then we traverse $k$-times through the cycle $Y \to X \to Y$. The right-hand side of the trek is defined analogously. In what follows we abbreviate the product of transfer-functions, $\dirfrq{Y}{X}\dirfrq{X}{Y}$, by $\circfrq{Y}{X}$. Summing the trek-monomial-functions associated to the treks in $\T{X \to Y}^1$ yields  
\begin{align*}
    \sum_{\pi \in \T{X \to Y}^1} \Sp{}^{(\pi)} &= \dirfrq{X}{Y} \sum_{k \geq 0} \circfrq{Y}{X}^k \Sp{X}^\internal (\dirfrq{X}{Y} \sum_{l \geq 0} \circfrq{Y}{X}^l)^\ast \\
    &= \frac{\dirfrq{X}{Y}}{1 - \circfrq{X}{Y}} \Sp{X}^\internal \frac{\dirfrq{X}{Y}^\ast}{1 - \circfrq{X}{Y}^\ast}\\
    &= \frac{|\dirfrq{X}{Y}|^2}{|1 - \circfrq{X}{Y}|^2} \Sp{X}^\internal,
\end{align*}
see the blue line in Figure 10.a) of the main paper for an illustration. This expression is well defined as long as $|\circfrq{Y}{X}(\omega)| < 1$, for all $\omega \in [0, 2\pi)$. 

The second subset of $\T{X\to Y}$ consists of the treks
\begin{align*}
    \T{X \to Y}^2 &= \{ X \leftrightarrows^{\times k} Y \leftarrow X \leftarrow Z \to X \to Y \rightleftarrows^{\times l} X \}_{k,l \geq 0},
\end{align*}
so that its associated trek-monomial-function-series takes the form  
\begin{align*}
    \sum_{\pi \in \T{X\to Y}^2} \Sp{}^{(\pi)}
    &= \frac{|\dirfrq{Z}{X}\dirfrq{X}{Y}|^2}{|1- \circfrq{Y}{X}|^2} \Sp{Z}^\internal.
\end{align*}

The third subset contains the following treks
\begin{align*}
    \mathcal{T}^3_{X \to Y} &= \{ X \leftrightarrows^{\times k} Y \leftarrow X \leftarrow Y \leftarrow Z \to Y \to X \to Y \rightleftarrows^{\times l} X \}_{k,l \geq 0},
\end{align*}
so that its associated trek-monomial-function-series admits the expression
\begin{align*}
    \sum_{\pi \in \T{X \to Y}^3} \Sp{}^{(\pi)}&= \frac{|\dirfrq{Z}{Y} \circfrq{Y}{X}|^2}{|1 - \circfrq{Y}{X}|^2} \Sp{Z}^\internal.
\end{align*}

The fourth set is composed of the following treks
\begin{align*}
    \T{X \to Y}^4 &= \{ 
    X \leftrightarrows^{\times k} Y \leftarrow X \leftarrow Z \to Y \to X \to Y \rightleftarrows^{\times l} X \}_{k,l \geq 0} \\
    & \cup \{ X \leftrightarrows^{\times m} Y \leftarrow X \leftarrow Y \leftarrow Z \to X \to Y \rightleftarrows^{\times n} X \}_{m,n \geq 0},
\end{align*}
so that its associated trek-monomial-function-series is
\begin{align*}
    \sum_{\pi \in \T{X \to Y}^4} \Sp{}^{(\pi)}  &= 2\mathrm{Re} (\frac{\dirfrq{Z}{X} \dirfrq{X}{Y} \dirfrq{Z}{Y}^\ast \circfrq{Y}{X}^\ast}{|1 - \circfrq{Y}{X}|^2}) \Sp{Z}^\internal.
\end{align*}
The sum of the trek-monomial-function series associated to $\T{2}, \T{3}, \T{4}$ expresses the causal contribution of $\X_Z^\internal$ to $\Sp{Y}$ which got mediated by $X$. This contribution is the yellow line in Figure 10.a) in the main paper. 

Finally, the fifth set is
\begin{align*}
    \T{X\to Y}^5 &= \{ X \leftrightarrows^{\times l} Y \leftarrow X \leftarrow Y \to X \to Y \rightleftarrows^{\times l} X \}_{k,l \geq 0},
\end{align*}
so that its associated trek power-series is
\begin{align*}
    \sum_{\pi \in \T{X \to Y}} \Sp{}^{(\pi)} &= \frac{|\circfrq{Y}{X}|^2}{|1- \circfrq{Y}{X}|^2} \Sp{Y}^\internal, 
\end{align*}
this function is the salmon-colored line Figure 10.a) in the main paper. 
We can now conclude with an analytical description of the causal contribution of $X$ to $Y$, i.e. $\spcausal{X}{Y}$ is the sum over all the five trek-monomial-function-series we have computed above. 

\textbf{The confounding $\spconf{X}{Y}$.} Note that in the process graph $S(\mathcal{G}_c)$ both $Z$ and $Y$ confound the causal relation between $X$ and $Y$. Hence, the confounding contribution is given in terms of the internal dynamics $\X_Z^\internal$ and $\X_Y^\internal$. The contribution of $Z$ to the confounding is captured by the following set of treks
\begin{align*}
    \T{X\leftrightarrow Y}^1 &=
    \{ X \leftrightarrows^{\times l } Y \leftarrow X  \leftarrow Z \to Y \}_{l \geq 0} \\
    &\cup \{  Y \leftarrow Z \to X \to Y \rightleftarrows^{\times k} X \}_{k \geq 0}, \\
    \T{X \leftrightarrow Y}^2 &= 
    \{ X \leftrightarrows^{\times l } Y \leftarrow X \leftarrow Y \leftarrow Z \to Y \}_{l \geq 0} \\
    & \cup \{ Y \leftarrow Z \to Y \to X \to Y \rightleftarrows^{\times k} Y \}_{l \geq 0}. 
\end{align*}
The sum of over all trek-monomial-functions associated to the treks in $\T{X\leftrightarrow Y}^1 \cup \T{X\leftrightarrow Y}^2$ is 
\begin{align*}
    \sum_{\pi \in \T{X \leftrightarrow Y}^1} \Sp{}^{(\pi)} + \sum_{\pi \in \mathcal{T}_{X \leftrightarrow Y}^2} \Sp{}^{(\pi)}
    &= 2 \mathrm{Re}(\frac{\dirfrq{Z}{X} \dirfrq{X}{Y} \dirfrq{Z}{Y}^\ast}{1 - \circfrq{Y}{X}} + \frac{\dirfrq{Z}{Y}\circfrq{Y}{X} \dirfrq{Z}{Y}^\ast}{1-\circfrq{Y}{X}}) \Sp{Z}^\internal,
\end{align*}
this expression is the yellow line in Figure 10.b.) in the main paper.

The contribution of $Y$ to $\spconf{X}{Y}$ is due to the following set of treks 
\begin{align*}
    \T{X \leftrightarrow Y}^3 & = \{ Y \rightleftarrows^{\times k} X \}_{k \geq 0} \\
    & \cup \{ X \leftrightarrows^{\times l} Y \}_{l \geq 0}.
\end{align*}
Its associated trek function series is 
\begin{align*}
    \sum_{\pi \in \T{X\leftrightarrow Y}^3} \Sp{}^{(\pi)} &= 2\mathrm{Re}(\frac{1}{1 - \circfrq{Y}{X}}) \Sp{Y}^\internal, 
\end{align*}
in Figure 10.b) of the main paper this expression is the salmon-colored line. 

\textbf{The residual $\spresidual{X}{Y}$.} Finally, the variability which is independent of the presence of $X$ is
\begin{align*}
    \spresidual{X}{Y} &= |\dirfrq{Z}{Y}|^2 \Sp{Z}^\internal + \Sp{Y}^\internal,
\end{align*}
see Figure 10c.) in the main paper. 

\end{document}